\theoremstyle{plain}
\newtheorem{theorem}{Theorem}[section]
\newtheorem{lemma}[theorem]{Lemma}
\newtheorem{proposition}[theorem]{Proposition}
\theoremstyle{remark}
\newtheorem{remark}[theorem]{Remark}
\newcounter{as}[section]
\newcommand{\abs}[1]{\left|#1\right|}
\newcommand{\RR}{\mathbb{R}}
\newcommand{\ZZ}{\mathbb{Z}}
\newcommand{\PP}{\mathbb{P}}
\newcommand{\EE}{\mathbb{E}}
\DeclareMathOperator{\erf}{erf}
\DeclareMathOperator{\CAP}{cap}
\DeclareMathOperator{\DIV}{div}
\definecolor{bblue}{rgb}{.2,0.2,.8}
\begin{document}\raggedbottom

\title[Mean extinction time estimate for the contact process on stars]
{The Eyring--Kramers Law for the extinction time of \\ the contact process on stars}

\begin{abstract}
	In this paper, we derive a precise estimate for the mean extinction time 
	of the contact process with a fixed infection rate 
	on a star graph with $N$ leaves.
	Specifically, we determine not only
	the exponential main factor but also the exact sub-exponential
	prefactor in the asymptotic expression for the mean extinction time 
	as $N\to\infty$. 
	Previously, such detailed asymptotic information on the mean extinction time of the contact process was available exclusively for complete graphs.
	To obtain our results, we first
	establish an accurate estimate for the stationary distribution
	of a modified contact process, employing special function
	theory and refined Laplace's method. Subsequently, we apply a
	recently developed potential theoretic approach for analyzing metastability in
	non-reversible Markov processes, enabling us to deduce the asymptotic expression. 
	The integration of these methodologies constitutes a novel approach developed in this paper, one which has not been utilized previously in the study of the contact process.
\end{abstract}

\subjclass[2020]{Primary 60J28; Secondary 60K35 82C22.}

\author{Younghun Jo}

\address{Y. Jo. Department of Mathematical Sciences, Seoul National University, Republic of Korea.}
\email{starrysky422@snu.ac.kr}

\maketitle

\section{Introduction}\label{section:intro}

The contact process is a class of interacting particle systems
	introduced by Harris \cite{harris} in 1974
	to model the spread of an infection within a population.
It is also referred to as the susceptible-infected-susceptible (SIS) model
	by mathematical epidemiologists.
In the contact process on a graph $G$,
	infected sites recover with rate $1$
	and transmit infection to each adjacent site with rate $\lambda$.
For a comprehensive introduction to the contact process,
	we refer the reader to \cite[Part I]{liggett-99}
	and the references therein.

A critical feature of the contact process on finite graphs is that
	the infection eventually becomes extinct almost surely.
This extinction occurs when all vertices are simultaneously healthy at some point in time,
	after which this all-healthy state persists indefinitely.
Consequently, on finite graphs,
	the analysis typically focuses on estimating the hitting time $\tau_{G}$ of this all-healthy configuration,
	termed the \emph{extinction time}.
The initial configuration usually considered is the all-infected state.
The infection rate $\lambda$ influences the extinction time;
	higher rates lead to prolonged durations before the process reaches the healthy state,
	due to more frequent transmissions of infection by each infected node.
Accordingly, characterizing the quantitative relationship between
	the extinction time $\tau_G$ and the rate $\lambda$ is the main agenda
	in the study of the contact process.

For large enough $\lambda$, 
	the infection may persist for an exceedingly long period.
To be more precise,
	consider a family of finite graphs $(G_N)_{N\ge 1}$ of increasing size
	and a fixed infection rate $\lambda > 0$.
Then, for sufficiently large $\lambda$,
	the extinction time grows exponentially:
	there exists $c_\lambda > 0$ such that
\begin{equation*}
	\PP[\tau_{G_N} \ge \exp(c_\lambda|G_N|)] \xrightarrow{N\to\infty} 1,
\end{equation*}
	where for any graph $G$ we denote its number of vertices by $|G|$.
This behavior has been demonstrated across various types of graphs,
	including boxes in $\ZZ^d$ \cite{cassandro, schonmann,
	durrett-88a, durrett-88b, durrett-89, mountford-93, mountford-99},
	general finite graphs \cite{mountford-16, schapira-17},
	and random graphs \cite{chatterjee, mountford-13, linker, schapira-21}.

The exponential growth of the extinction time can be interpreted as
	an instance of metastability, a widespread phenomenon characterized by
	prolonged persistence in transient states within stochastic systems.
Metastability often indicates that 
	the system undergoes a first-order phase transition,
	wherein the transition times between metastable states grow exponentially
	as $N \to \infty$, 
	where $N$ stands for the system size or spatial resolution.
This behavior is common in a wide class of models,
	including condensing interacting particle systems,
	low-temperature spin systems,
	and stochastic partial differential equations.
We refer the reader to the monographs \cite{bovier-15, olivieri}
	for a detailed discussion on recent developments on this topic.

The metastable dynamics of the contact process has been extensively analyzed
	across a broad class of graphs.
Mountford, Mourrat, Valesin, and Yao \cite{mountford-16} proved that
	if $\lambda > \lambda_c(\ZZ)$,
	there exists $c_\lambda > 0$ such that
\begin{equation*}
	\EE \tau_{G_N} \ge \exp(c_\lambda|G_N|)
\end{equation*}
	for connected graphs $G_N$ with uniformly bounded degree.
Here, $\lambda_c(\ZZ)$ denotes the critical infection rate \cite{liggett-99}
	associated with the phase transition of the contact process on $\ZZ$,
	defined as the infimum infection rate 
	at which an infection initiated from a single vertex
	survives indefinitely with positive probability.
Schapira and Valesin \cite{schapira-17} relaxed the bounded-degree constraint,
	establishing a slightly weaker result.

If we shift our focus to more concrete families, more is known.
A series of works \cite{cassandro, schonmann, durrett-88b, mountford-93, mountford-99}
	showed that
	if $\lambda$ is sufficiently large,
	then there exists a sharp exponent for the extinction time on boxes of $\ZZ^d$.
More precisely, there exists $c_\lambda > 0$ such that
\begin{equation}\label{eq:ldp}
	\frac{1}{|B_N|}\log \tau_{B_N} \xrightarrow{N\to\infty} c_\lambda\;
	\text{ in probability,}
\end{equation}
	where $B_N$ is a box of $\ZZ^d$ with side length $N$.
Schapira and Valesin \cite{schapira-21}
	proved an analogous result for a variety of random graph models.
It is worth pointing out that
	the condition of sufficiently large $\lambda$ is essential:
	for certain graphs, including boxes in $\ZZ^d$ \cite{durrett-88a},
	the extinction time grows only logarithmically 
	with respect to the number of vertices when $\lambda$ is sufficiently small.
We also mention that the logarithmic estimate \eqref{eq:ldp} is hard to obtain
	unless we are able to exploit a specific geometric features
	of the underlying graph.
For instance, when considering the periodic lattice $\ZZ_N^d$,
	rather than a lattice with open boundary conditions,
	a logarithmic estimate of the form \eqref{eq:ldp} has not been obtained.

Next, we shift our focus from logarithmic estimates 
	to precise asymptotics for the extinction time $\EE \tau_G$.
For processes exhibiting metastable behavior,
	a sharp asymptotic formula for the mean hitting time 
	from one metastable set to another
	is often referred to as the Eyring--Kramers law \cite{eyring, kramers}.
Obtaining the Eyring--Kramers law for the extinction time of the contact process is
	known to be highly challenging, with rigorous results available only for 
	the complete graph due to its simple geometric structure.
The strong spatial symmetry of complete graphs allows 
	the contact process to be reduced to 
	a one-dimensional nearest neighbor random walk,
	whose hitting times can be explicitly computed.
Even slight asymmetries in the underlying graph structure 
	(e.g., a one-dimensional cycle $\mathbb{Z}_N$)
	introduce significant complications.

Historically,
	studies of extinction times often employed coarse methodologies
	such as percolation theory and coupling methods.
Meanwhile, a significant advancement in metastability theory,
	specifically in establishing the Eyring--Kramers law, 
	was achieved in the influential works of Bovier, Eckhoff, Gayrard, and Klein 
	\cite{bovier-01,bovier-04}.
They developed a precise framework for quantifying
	key metastability metrics, such as transition times and hitting probabilities,
	in potential theoretic terms, including equilibrium potentials and capacities.
As a result, they developed a robust methodology
	for analyzing metastable behavior of reversible dynamics,
	now known as the potential theoretic approach.
This framework has recently been extended to nonreversible settings 
	in \cite{gaudilliere, landim, seo-19, slowik}.
We refer the reader to \cite{bovier-15, seo-arxiv} 
	for a detailed description on this scheme.

The main focus of this article is the contact process on star graphs.
Refer to Figure \ref{fig:star} for examples of configurations of the contact process on a star.
A star graph, characterized by a central node directly connected to all other nodes,
	exhibits one discernible asymmetry
	and serves as a natural model for analyzing epidemic hubs.
The contact process on stars was initially investigated by Pemantle \cite{pemantle-92}
	within his study on contact process on trees.
Durrett and Huang \cite{huang} recently provided upper and lower bounds
	on the exponent governing the extinction time.
More recently, Wang \cite{wang}
	observed from the perspective of large deviation theory that
	the explicit exponent of the mean extinction time must equal
\begin{equation*}
	c_\lambda
	= 2\log(1+\lambda) - \log(1+2\lambda)
\end{equation*}
	so that the mean extinction time scales as $e^{c_\lambda N}$,
	up to a subexponential prefactor, on a star with $N$ leaves.
The significance of stars in the analysis of the contact process 
	arises from their role as fundamental building blocks 
	or long-time infection reservoirs within larger graph structures.
This theme appears prominently in studies involving diverse underlying graph types,
	especially in random graph models,
	including preferential attachment models \cite{berger-05},
	power law random graphs \cite{chatterjee, mountford-13},
	Galton--Watson trees \cite{huang},
	and random hyperbolic graphs \cite{linker}.

In this study, we establish the Eyring--Kramers law for
	the extinction time of the contact process on star graphs
	by applying the potential theoretic approach for non-reversible systems.
This marks the first nontrivial instance of obtaining sharp estimates for the mean extinction time
	and the first application of potential theoretic principles to the study of the contact process.
Our main result is stated explicitly in Theorem \ref{thm:eyring kramers}.

Regarding our proof methodology, 
	one notable challenge arises from the requirement of irreducibility
	traditionally imposed by potential theory,
	a condition not satisfied by the contact process due to its absorbing states.
We overcome this issue by introducing modified processes and 
	quasi-stationary distributions,%
	\footnote{%
		In this paper, 
			the term ``quasi-stationary distribution'' is used 
			in a non-standard way. 
		Readers seeking clarification 
			may refer to Subsection \ref{subsection:quasi-stationary}.
	}
	thereby enabling the application of the potential theoretic framework 
	to systems with absorbing states.
We employ techniques from special function theory and refined Laplace's method
	to derive precise estimates for the quasi-stationary distribution.
Additionally, the inherent non-reversibility of the contact process poses
	significant technical obstacles,
	which we address by leveraging recent advances 
	in the analysis of non-reversible dynamics based on flow structures.

\section{Model and Main Results}\label{section:notation}

\begin{figure}
	\centering
	\begin{tikzpicture}
		\tikzstyle{vh}=[circle, draw, solid, fill=none, inner sep=0pt, minimum width=15pt]
		\tikzstyle{vi}=[circle, draw, solid, fill=gray, inner sep=0pt, minimum width=15pt]
		\node[vh] (o1) at (0,0) {$0$};
		\node[draw=none,minimum size=3cm,regular polygon,regular polygon sides=8] (a) at (o1) {};
		\foreach \x in {1,2,4,6,7}{
			\node[vi] (a\x) at (a.corner \x) {$1$};
		}
		\foreach \x in {3,5,8}{
			\node[vh] (a\x) at (a.corner \x) {$0$};
		}
		\foreach \x in {1,...,8}{
			\draw (o1) to (a\x);
		}
		\node[shift={(.15,.5)}] at (a8) {$A$};
		
		\node at (2.5,0) {$\Longrightarrow$};

		\node[vi] (o2) at (5,0) {$1$};
		\node[draw=none,minimum size=3cm,regular polygon,regular polygon sides=8] (b) at (o2) {};
		\foreach \x in {1,2,4,6,7}{
			\node[vi] (b\x) at (b.corner \x) {$1$};
		}
		\foreach \x in {3,5,8}{
			\node[vh] (b\x) at (b.corner \x) {$0$};
		}
		\foreach \x in {1,...,8}{
			\draw (o2) to (b\x);
		}
		\node[shift={(.15,.5)}] at (b8) {$A$};

		\node at (7.5,0) {$\Longrightarrow$};

		\node[vi] (o3) at (10,0) {$1$};
		\node[draw=none,minimum size=3cm,regular polygon,regular polygon sides=8] (c) at (o3) {};
		\foreach \x in {1,2,4,6,7,8}{
			\node[vi] (c\x) at (c.corner \x) {$1$};
		}
		\foreach \x in {3,5}{
			\node[vh] (c\x) at (c.corner \x) {$0$};
		}
		\foreach \x in {1,...,8}{
			\draw (o3) to (c\x);
		}
		\node[shift={(.15,.5)}] at (c8) {$A$};
	\end{tikzpicture}
	%\captionsetup{justification=centering}
	\caption{%
		Example configurations of the contact process on a star.
		Note that the number of infected leaves cannot increase when the hub is healthy (see the configuration on the left). 
		A healthy leaf (denoted by $A$) can become infected only after the hub has been reinfected.}
	\label{fig:star}
\end{figure}

Throughout this article, we fix a value $\lambda > 0$.
Let $G = (V,E)$ be an undirected simple graph with bounded degree,
	and write $x \sim y$ when vertices $x$ and $y$ are adjacent.
The contact process on $G$ with infection rate $\lambda$ is
	a continuous-time Markov process $(\eta_t)_{t\ge 0}$ 
	taking values in $\{0,1\}^V$.
By identifying $\eta_t$ with the subset $\{x\in V: \eta_t(x) = 1\}$ of $V$,
	the transition rates of the process are given by
\begin{equation*}
	\begin{cases} \smallskip
		\eta_t \to \eta_t \setminus \{x\} \text{ for each $x\in\eta_t$ with rate $1$,} \\  \smallskip
		\eta_t \to \eta_t \cup \{x\} \text{ for each $x\not\in\eta_t$ with rate $\lambda\cdot |\{y\in \eta_t: x\sim y\}|$},
	\end{cases}
\end{equation*}
where $|A|$ denotes the cardinality of a set $A$.
At time $t$, a vertex $x$ is said to be \emph{healthy} if $\eta_t(x) = 0$,
	and \emph{infected} if $\eta_t(x) = 1$.
Note that the \emph{all-healthy state} $\eta_t \equiv 0$ is an absorbing state of the process.

Let $S_N$ be the star graph with one hub and $N$ leaves.
Since the $N$ leaves are homogeneous,
	the contact process on the star can be faithfully described by
	a new Markov process $(o_t,n_t)_{t\ge 0}$ taking values in $\{0,1\}\times [0,N]$,
	where $o_t$ denotes the status of the hub---healthy or infected---%
	and $n_t$ is the number of infected leaves.%
	\footnote{%
		In this article,
		we let $[a,b]$ stand for the intersection of the closed interval $[a,b]$ with $\ZZ$, for $a,b\in\RR$.
	}
In essence, the process behaves as a continuous-time random walk on a ladder graph,
	whose transition rates defined as:
\begin{equation*}
	\begin{cases} \smallskip
		(1,n) \to (1,n+1) &\text{with rate $\lambda(N-n)$,} \\  \smallskip
		(1,n) \to (1,n-1) &\text{with rate $n$,} \\  \smallskip
		(1,n) \to (0,n) &\text{with rate $1$,} \\  \smallskip
		(0,n) \to (0,n-1) &\text{with rate $n$,} \\  \smallskip
		(0,n) \to (1,n) &\text{with rate $\lambda n$.}
	\end{cases}
\end{equation*}
The dynamics of this process depend significantly on the status of the hub.
Specifically, when the hub is healthy ($o_t = 0$),
	the number of infected leaves cannot increase until the hub becomes reinfected.
This dependency introduces a critical asymmetry, 
	affecting the overall behavior of infection spread within the graph. 
We refer to Figure \ref{fig:ladder} for an illustration.

Our main result establishes the Eyring--Kramers law
	for the extinction time of the contact process on stars.
\begin{theorem}[Eyring--Kramers law]\label{thm:eyring kramers}
	Let $\varepsilon > 0$ be given.
	Then, for each $x \in \{0,1\}\times [\varepsilon N, N]$, we have%
	\footnote{%
		Let $(f_N)_{N\ge 1} = (f_N(n))_{N\ge 1}$ and $(g_N)_{N\ge 1} = (g_N(n))_{N\ge 1}$ be
			collections of real functions in $n$.
		We write $f_N = O(g_N)$, $g_N = \Omega(f_N)$, or $f_N \lesssim g_N$ if
			there exists some constant $C > 0$ such that
		\begin{equation*}
			|f_N(n)| \le C |g_N(n)|
			\text{ for all $N \ge 1$ and $n$.}
		\end{equation*}
		We write $f_N = o(g_N)$ or $f_N \ll g_N$ if
		\begin{equation*}
			\lim_{N\to\infty} \sup_n\frac{f_N(n)}{g_N(n)} = 0.
		\end{equation*}
		In particular, $f_N = o(1)$ if $\sup_n f_N(n) \to 0$ as $N \to\infty$.
		We write $f_N \simeq g_N$ if $f_N = g_N(1+o(1))$.}
	\begin{equation}\label{eq:eyring kramers}
		\EE_{x}\tau_{(0,0)}
		= \kappa_\lambda
			N^{-\frac{1}{1+2\lambda}}
			\Bigl(\frac{(1+\lambda)^2}{1+2\lambda}\Bigr)^{N}
			(1+o(1))
	\end{equation}
	as $N \to \infty$, where the error term $o(1)$ is uniform in $x$ and the constant $\kappa_\lambda$ is explicitly given by 
	\[
		\kappa_\lambda
		= \Bigl(\frac{1+\lambda}{\lambda}\Bigr)^{\frac{2}{1+2\lambda}}
			\Gamma\Bigl(\tfrac{2(1+\lambda)}{1+2\lambda}\Bigr).
	\]
	Here, $\Gamma(a)$ denotes the gamma function.
	In particular,
	\begin{equation}\label{eq:large deviation}
		\lim_{N\to\infty}
		\sup_{x \in \{0,1\}\times [\varepsilon N, N]}
		\frac{1}{N}\log \EE_{x}\tau_{(0,0)}=
		  2\log(1+\lambda) - \log(1+2\lambda) .
	\end{equation}
\end{theorem}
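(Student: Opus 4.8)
The plan is to route the computation through the quasi-stationary distribution (QSD) of the contact process conditioned on non-extinction, and then invoke the potential-theoretic description of metastable hitting times for non-reversible chains. Write $\Omega = (\{0,1\}\times[0,N])\setminus\{(0,0)\}$ for the set of non-absorbing configurations and let $L_N$ be the sub-Markov generator on $\Omega$ of the process killed upon reaching $(0,0)$. Since $(o_t,n_t)$ is irreducible on $\Omega$, Perron--Frobenius theory provides a unique QSD $\mu_N^*$ with $\mu_N^* L_N = -\theta_N\mu_N^*$ for some $\theta_N > 0$, and it is classical that, started from $\mu_N^*$, the extinction time is exponentially distributed with mean $\theta_N^{-1}$, so $\EE_{\mu_N^*}\tau_{(0,0)} = \theta_N^{-1}$. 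Moreover, the only one-step transitions into $(0,0)$ are $(0,1)\to(0,0)$ and $(1,0)\to(0,0)$, each at rate $1$, so summing the eigenrelation over $\Omega$ gives the clean identity $\theta_N = \mu_N^*(0,1) + \mu_N^*(1,0)$. The theorem will follow from two ingredients: a sharp asymptotics for $\mu_N^*$ --- in particular for the boundary weights $\mu_N^*(0,1),\,\mu_N^*(1,0)$ and for the total mass concentrated near the metastable configuration --- and a transfer statement $\EE_x\tau_{(0,0)} \simeq \EE_{\mu_N^*}\tau_{(0,0)}$ that is uniform over $x \in \{0,1\}\times[\varepsilon N,N]$.

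For the QSD estimate, writing $a_n = \mu_N^*(0,n)$ and $b_n = \mu_N^*(1,n)$ turns $\mu_N^* L_N = -\theta_N\mu_N^*$ into a coupled pair of three-term recursions in $n$ carrying the spectral parameter $\theta_N$, which --- being exponentially small --- influences the solution only through a boundary layer near $n = O(1)$. I would solve these recursions via the generating functions $A(s) = \sum_n a_n s^n$ and $B(s) = \sum_n b_n s^n$: because every transition rate is affine in $n$, these satisfy an explicitly solvable coupled linear first-order ODE system, equivalently a single second-order ODE whose solutions are classical special functions (of confluent hypergeometric / Kummer type) with parameters depending on $\lambda$. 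A quantitative Laplace / saddle-point analysis of the resulting integral representations then delivers $a_n$ and $b_n$ up to factors $1+o(1)$: it shows that $\mu_N^*$ concentrates in an $O(\sqrt N)$ window around the leaf occupation $n^* \approx \tfrac{\lambda}{1+\lambda}N$ with infected hub, it supplies the normalising constant from the Gaussian mass there, and --- the delicate point --- it resolves the small-$n$ boundary layer, which is what produces the anomalous power $N^{-1/(1+2\lambda)}$ and the constant $\Gamma\bigl(\tfrac{2(1+\lambda)}{1+2\lambda}\bigr)$ in $\kappa_\lambda$. Assembling these, $\theta_N = \mu_N^*(0,1) + \mu_N^*(1,0) \simeq \kappa_\lambda^{-1}\,N^{1/(1+2\lambda)}\bigl(\tfrac{1+2\lambda}{(1+\lambda)^2}\bigr)^N$, which is \eqref{eq:eyring kramers} at the starting measure $\mu_N^*$.

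For the transfer statement I would invoke the potential-theoretic approach to metastability of non-reversible Markov processes built on the flow structure, as developed in \cite{gaudilliere, landim, seo-19, slowik}. Let $\mathcal W$ be a neighbourhood of the metastable state $(1,n^*)$, so that the previous step gives $\mu_N^*(\mathcal W) = 1-o(1)$. One shows, first, that from any $x \in \{0,1\}\times[\varepsilon N,N]$ the process reaches $\mathcal W$ within a time polynomial in $N$, hence $o(\theta_N^{-1})$, and does so with overwhelming probability before extinction; and, second, that the capacity between $\mathcal W$ and $(0,0)$, relative to the Dirichlet form associated with $\mu_N^*$ and its $\mu_N^*$-adjoint, is determined to leading order by a matching pair of bounds --- Dirichlet's principle with an explicit test function for one inequality, Thomson's principle with an explicit unit flow routed through the infected-hub row for the other --- with the dominant resistance coming from the small-$n$ bottleneck, where precisely the QSD asymptotics of the previous step enter. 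Combined with the estimate that the $\mu_N^*$-equilibrium potential of $\mathcal W$ is, up to $o(1)$, the indicator of $\mathcal W$, this gives $\EE_x\tau_{(0,0)} = (1+o(1))\,\theta_N^{-1}$ uniformly over such $x$. Substituting the value of $\theta_N$ yields \eqref{eq:eyring kramers}, and \eqref{eq:large deviation} follows at once by taking $\tfrac1N\log$ of \eqref{eq:eyring kramers}, since the prefactor $\kappa_\lambda N^{-1/(1+2\lambda)}$ is subexponential.

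I expect the main obstacle to lie in the QSD step, and within it in the extraction of the exact prefactor rather than merely the exponential rate. The rate $\bigl((1+\lambda)^2/(1+2\lambda)\bigr)^N$ is already available from large-deviation heuristics \cite{wang} and can be reached by soft arguments, whereas the prefactor $\kappa_\lambda N^{-1/(1+2\lambda)}$ demands the honest special-function solution of the coupled recursion, a Laplace's method carried out with remainder bounds uniform in $N$, and above all a correct treatment of the small-$n$ boundary layer, where the fast-hub reduction fails and the non-trivial exponent $1/(1+2\lambda)$ is generated; a constant-factor slip anywhere here would corrupt $\kappa_\lambda$. A secondary difficulty, in the transfer step, is making the non-reversible capacity estimate sharp --- the healthy-hub and infected-hub rows of the ladder must be weighted by $\mu_N^*$ consistently in both the test function and the test flow --- and making the relaxation bound to $\mathcal W$ uniform over the whole window $[\varepsilon N,N]$.
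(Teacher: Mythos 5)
Your overall architecture --- compute the quasi-stationary distribution precisely, extract the subexponential prefactor from a small-$n$ boundary-layer analysis, and invoke the flow-based potential theory for non-reversible chains for the transfer to arbitrary starting points in $\{0,1\}\times[\varepsilon N,N]$ --- is the same as the paper's, and your numerology checks out: the identity $\theta_N = \mu_N^*(0,1)+\mu_N^*(1,0)$ together with the QSD asymptotics indeed reproduces the stated $\kappa_\lambda N^{-1/(1+2\lambda)}$ (using $\Gamma(1+z)=z\,\Gamma(z)$ to match the $\Gamma(2(1+\lambda)/(1+2\lambda))$). But the route through the principal Dirichlet eigenvalue differs from the paper in two respects worth flagging. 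First, the paper never solves the eigenproblem $\mu^*L_N=-\theta_N\mu^*$: it instead builds an irreducible \emph{regenerative} process by adding a single reinjection rate $(0,0)\to(1,0)$ and takes its genuine stationary distribution, whose $3$-term recursion and generating-function ODE carry \emph{no} spectral parameter at all (Propositions~\ref{prop:recurrence}--\ref{prop:recurrence solution}). Your version keeps $\theta_N$ inside the recursion, and while you are right that it is exponentially small and only matters in a boundary layer, making that rigorous is an added delicacy the paper sidesteps. Relatedly, the ODE that arises is first order and its solution is an incomplete beta function (a $_2F_1$ specialisation), not a confluent-hypergeometric/Kummer ($_1F_1$) function; this would matter once you actually try to run Laplace's method with the uniform remainder bounds needed in Theorems~\ref{thm:stationary asymptotic high}--\ref{thm:stationary asymptotic low}. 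Second, your transfer step is logically redundant as written: once you have the matching Dirichlet/Thomson bounds for $\CAP(\mathcal W,(0,0))$ and the estimate $h^\dagger\simeq\mathbf 1_{\mathcal W}$, the mean hitting time formula (Proposition~\ref{prop:mean hit time formula}) delivers $\EE_x\tau_{(0,0)}=\CAP(x,(0,0))^{-1}\sum_z h^\dagger_{x,(0,0)}(z)\,\mu(z)\simeq\CAP^{-1}$ directly, with no need for $\theta_N$ --- this is exactly what the paper does in Subsection~\ref{subsection:main theorem proof}, together with a trace process on $F=\{(0,0)\}\cup(\{1\}\times[0,N])$ to collapse the healthy-hub row and a short monotonicity/hub-reinfection argument to handle initial states $(0,n)$. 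Conversely, if you want to deduce $\EE_x\tau\simeq\theta_N^{-1}$ from fast relaxation to $\mathcal W$ and memory loss, you need a separate, carefully quantified regeneration or coupling argument rather than the capacity computation; your paragraph conflates the two. So: correct target, essentially correct ingredients, but the eigenvalue route is a detour, and the transfer argument needs to commit to one mechanism and justify it quantitatively.
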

The exponent of the mean extinction time given explicitly 
	on the right-hand side of \eqref{eq:large deviation}
	aligns with the observation previously made by Wang \cite{wang}.

The paper is organized as follows.
In Section \ref{section:energy landscape},
	we explore the behavior of the quasi-stationary distribution of the process,
	establishing a precise asymptotic formula as $N \to \infty$.
In Section \ref{section:theoretical background},
	we introduce a potential theoretic framework suitable for non-reversible dynamics.
Finally, in Section \ref{section:proof},
	we prove our main theorem by expressing the mean extinction time
	in terms of capacity and equilibrium potential,
	and by subsequently estimating the capacity through variational principles.

\section{Energy Landscape}\label{section:energy landscape}

We begin by examining the quasi-stationary distribution of the contact process on the star graph $S_N$
	with a fixed infection rate $\lambda > 0$.
In Subsection \ref{subsection:quasi-stationary},
	we derive an explicit representation of the quasi-stationary distribution 
	by examining the stationarity conditions of the process.
In Subsection \ref{subsection:asymptotic},
	we determine the sharp asymptotic behavior of the quasi-stationary distribution
	by employing techniques from special function theory and refined Laplace's method.
Some direct consequences of these asymptotics are discussed in Subsection \ref{subsection:landscape property}.

\subsection{Quasi-Stationary Distribution}\label{subsection:quasi-stationary}

\begin{figure}
	\centering
	\begin{tikzpicture}
		\def\l{4.5}
		\tikzstyle{vh}=[circle, draw, solid, fill=none, inner sep=0pt, minimum width=5pt]
		\tikzstyle{vi}=[circle, draw, solid, fill=gray, inner sep=0pt, minimum width=5pt]
		\tikzstyle{rect}=[draw, solid, minimum width=2cm, minimum height=2cm, node distance=2cm]
		
		\foreach \n/\m in {3/4,4/5,5/6}{
			\node[rect] (r0\n) at (\l*\n-4*\l,0) {};
			\node[vh] (o0\n) at (\l*\n-4*\l,0.3) {};
			\node[draw=none,minimum size=1cm,regular polygon,regular polygon sides=8] (a0\n) at (o0\n) {};
			\foreach \x in {1,...,\n}
				\node[vi] (a0\n\x) at (a0\n.corner \x) {};
			\foreach \x in {\m,...,8}
				\node[vh] (a0\n\x) at (a0\n.corner \x) {};
			\foreach \x in {1,...,8}
				\draw (o0\n) to (a0\n\x);
		}
		\foreach \n/\m in {3/4,4/5,5/6}{
			\node[rect] (r1\n) at (\l*\n-4*\l,-\l) {};
			\node[vi] (o1\n) at (\l*\n-4*\l,-\l+0.3) {};
			\node[draw=none,minimum size=1cm,regular polygon,regular polygon sides=8] (a1\n) at (o1\n) {};
			\foreach \x in {1,...,\n}
				\node[vi] (a1\n\x) at (a1\n.corner \x) {};
			\foreach \x in {\m,...,8}
				\node[vh] (a1\n\x) at (a1\n.corner \x) {};
			\foreach \x in {1,...,8}
				\draw (o1\n) to (a1\n\x);
		}
		\node at (-\l,-0.6) {$(0,n-1)$}; \node at (-\l,-\l-0.6) {$(1,n-1)$};
		\node at (0,-0.6) {$(0,n)$}; \node at (0,-\l-0.6) {$(1,n)$};
		\node at (\l,-0.6) {$(0,n+1)$}; \node at (\l,-\l-0.6) {$(1,n+1)$};

		\draw[->,thick] (r03) edge[bend right] node[midway, fill=white]{\footnotesize$\lambda (n-1)$} (r13);
		\draw[->,thick] (r13) edge[bend right] node[midway, fill=white]{\footnotesize$1$} (r03);

		\draw[->,thick] (r04) edge[bend right] node[midway, fill=white]{\footnotesize$\lambda n$} (r14);
		\draw[->,thick] (r14) edge[bend right] node[midway, fill=white]{\footnotesize$1$} (r04);

		\draw[->,thick] (r05) edge[bend right] node[midway, fill=white]{\footnotesize$\lambda (n+1)$} (r15);
		\draw[->,thick] (r15) edge[bend right] node[midway, fill=white]{\footnotesize$1$} (r05);

		\draw[->,thick] (r04) edge[bend right] node[midway, fill=white]{\footnotesize$n$} (r03);
		\draw[->,thick] (r05) edge[bend right] node[midway, fill=white]{\footnotesize$n+1$} (r04);
		\draw[->,thick] (r14) edge[bend right] node[midway, fill=white]{\footnotesize$n$} (r13);
		\draw[->,thick] (r15) edge[bend right] node[midway, fill=white]{\footnotesize$n+1$} (r14);

		\draw[->,thick] (r13) edge[bend right] node[midway, fill=white]{\footnotesize$\lambda(N-n+1)$} (r14);
		\draw[->,thick] (r14) edge[bend right] node[midway, fill=white]{\footnotesize$\lambda(N-n)$} (r15);

		\node at (-6.5,-2.25) {$\cdots$}; \node at (6.5,-2.25) {$\cdots$};

		\node[draw=gray, draw opacity=0.8, solid, rectangle, rounded corners, minimum width = \l*2 cm + 3cm, minimum height = 2.6cm, label={[anchor=center,fill=white,text=gray,text opacity=0.8]left:$u_n$}] (UN) at (0,0) {};

		\node[draw=gray, draw opacity=0.8, solid, rectangle, rounded corners, minimum width = \l*2 cm + 3cm, minimum height = 2.6cm, label={[anchor=center,fill=white,text=gray,text opacity=0.8]left:$v_n$}] (VN) at (0,-\l) {};
	\end{tikzpicture}
	%\captionsetup{justification=centering}
	\caption{%
		Transition rates for the contact process on a star.
		Dark circles denote infected vertices, and light circles denote healthy vertices.
	}
	\label{fig:ladder}
\end{figure}

Recall that the contact process is generally not irreducible,
	as it possesses a unique absorbing state---the all-healthy state.
Therefore, potential theory cannot be directly applied 
	because the stationary distribution is a Dirac mass at the absorbing state.
To address this, we slightly modify the original process by adding supplementary 
	transition rates from the absorbing state to other states, 
	thereby rendering the modified process irreducible.
Importantly, this type of modification does not affect the extinction time.

A natural choice for these supplementary transition rates,
	independent of the underlying graph structure, 
	involves setting them proportional to 
	the stationary measure conditioned on non-extinction---%
	often referred to as the quasi-stationary distribution of the process.
Under this choice, the stationary distribution of the resulting process would be 
	a convex combination of the quasi-stationary distribution and a Dirac mass 
	concentrated at the absorbing state.
However, deriving sharp asymptotic estimates for the quasi-stationary distribution 
	of an absorbing process generally poses a highly challenging problem. 

In view of this difficulty, we introduce only a single supplementary transition rate: 
\begin{equation*}
	(0,0) \to (1,0) \quad \text{with rate } \alpha,
\end{equation*}
where $\alpha > 0$.
We refer to this modified process as the \emph{regenerative process}.
Let $\nu = \nu_{N,\lambda,\alpha}$ denote 
	the stationary distribution of the regenerative process.
Due to the structural simplicity of the contact process on stars, 
	it turns out that
	$\nu$ can indeed be expressed as a convex combination of
	Dirac masses concentrated at the absorbing state $(0,0)$, 
	the states $(1,0)$ and $(0,1)$, 
	and the stationary distribution of the process 
	restricted to the set $(\{0,1\}\times [0,N])\setminus\{(0,0)\}$.
We note that the specific choice of $\alpha$ does not affect the subsequent analysis.

Although the stationary distribution $\nu$ of the regenerative process 
	is technically distinct from the quasi-stationary distribution, 
	it remains conceptually analogous,
	as it effectively assumes the role of 
	a stationary distribution for the absorbing process 
	within the potential theoretic analysis presented in subsequent sections.
For this reason, with a slight abuse of terminology, 
	we refer to $\nu$ as the quasi-stationary distribution 
	throughout this paper.

For computational convenience,
	we introduce a scaled measure $\mu = \mu_{N,\lambda,\alpha}$
	defined by $\nu = \frac{1}{Z_{N,\lambda}}\mu$, 
	where the scaling constant $Z_{N,\lambda}$ 
	is given by $Z_{N,\lambda} = \nu(1,N)^{-1}$.
By construction, we set the measure at the all-infected state to $\mu(1,N) = 1$.
We refer to $\mu$ as the quasi-stationary measure.
Furthermore, we introduce the notation:
\begin{equation*}
	u_n = \mu(0,n),\qquad v_n = \mu(1,n), \qquad 0\le n\le N,
\end{equation*}
	to represent the quasi-stationary measure of states
	in which the hub is healthy and infected, respectively.

By examining the stationarity conditions,
	we can readily derive the following $3$-term recurrence relations
	for the sequences $(u_n)_{0\le n\le N}$ and $(v_n)_{0\le n\le N}$.

\begin{proposition}[$3$-term recurrence relation for the quasi-stationary distribution]\label{prop:recurrence}
	Let the sequences $(u_n)_{0\le n\le N}$ and $(v_n)_{0\le n\le N}$ be as above.
	Then, for all $0\le n\le N$, it holds that
	\begin{align}
		&v_n = (1+\lambda) a_n - a_{n+1}, \label{eq:recurrence vn-1}\\
		&a_{n+1} = \lambda(N-n)v_n - (n+1)v_{n+1}, \label{eq:recurrence vn-2}\\
		&(n+1)a_{n+2} - (n+2+\lambda(N+1))a_{n+1} + \lambda(1+\lambda)(N-n)a_n = 0, \label{eq:recurrence vn-3}\\
		&(n+2)v_{n+2} - (n+2+\lambda N)v_{n+1} + \lambda(1+\lambda)(N-n)v_n = 0, \label{eq:recurrence vn-4}
	\end{align}
	where $u_{N+2} = u_{N+1} = v_{N+2} = v_{N+1} = 0$ and
	\[
		a_n = \begin{cases} \smallskip
			n u_n & \text{if $n\neq 0$,} \\ \smallskip
			\frac{\alpha}{1+\lambda}u_0 & \text{if $n = 0$.}
		\end{cases}
	\]
\end{proposition}

\begin{proof}
	The stationarity conditions at states with a healthy hub 
		yield \eqref{eq:recurrence vn-1}.
	Additionally, the conditions at states with an infected hub give the equations
	\begin{equation*}%\label{eq:stationary at infected hub}
		(n+1+\lambda(N-n))v_n
		= \lambda a_n + \lambda (N-n+1)v_{n-1} + (n+1)v_{n+1}
	\end{equation*}
	for all $1\le n\le N$.
	Subtracting the above equation with $n+1$ in place of $n$ 
		from the original equation multiplied by $1+\lambda$,
		we obtain 
	\begin{align*}
		&(n+2)v_{n+2} - (n+2+\lambda N)v_{n+1} + \lambda (1+\lambda)(N-n)v_n \\
		&= (n+1)v_{n+1} - (n+1+\lambda N)v_{n} + \lambda (1+\lambda)(N-n+1)v_{n-1}. 
	\end{align*}
	This identity shows that 
		the left-hand side of equation \eqref{eq:recurrence vn-4} 
		is constant. 
	Moreover, combining the stationarity conditions at the states 
		$(1,N)$ and $(0,N)$ shows that this constant must indeed be zero, 
		thus proving \eqref{eq:recurrence vn-4}.
	The equation \eqref{eq:recurrence vn-3} follows by a similar argument, 
		while the equation \eqref{eq:recurrence vn-2} can be proved using 
		backward induction on $n$.
\end{proof}

The stationary measure for a modified contact process on stars has
	previously been considered by Cator and Mieghem \cite{cator}.
Their setting involves a different version of modification:
	instead of adding supplementary rates,
	they removed all transitions leading to the all-healthy state
	and considered the trace process (cf. Subsection \ref{subsection:trace process}) restricted to the states with an infected hub.
The stationary distribution of their modified process, denoted by $\pi_n = \pi(1,n)$, 
	is essentially a restriction of the stationary distribution of the contact process 
	to the set $(\{0,1\}\times [0,N])\setminus\{(0,0)\}$.
We also note that 
	Bhamidi, Nam, Nguyen, and Sly \cite{bhamidi-21} 
	considered yet another type of modified contact process on finite trees, 
	designed to enable a recursive argument over the tree height 
	when analyzing the extinction time through the stationary distribution.

According to \cite[Equation (12)]{cator}, 
	the recurrence relation \eqref{eq:recurrence vn-4} derived above 
	for the sequence $(v_n)_{0\le n\le N}$
	also applies to the distribution $(\pi_n)_{0\le n\le N}$, 
	except at the point $n = 0$.
Consequently, the sequence $(v_n)_{0\le n\le N}$ is a constant multiple of 
	$(\pi_n)_{0\le n\le N}$, only except at $n = 0$.
In the same paper, Cator and Mieghem also explicitly solved the recurrence relation 
	to obtain an exact form for the quasi-stationary distribution.
For completeness,
	we briefly reformulate their computations and results below.

\begin{proposition}
	\label{prop:recurrence solution}
	Let $(\pi_n)_{0\le n\le N}$ be the sequence satisfying
	$\pi_1 = \lambda N \pi_0$, $\sum_{n=0}^N \pi_n = 1$, and
	\begin{equation*}
		(n+2)\pi_{n+2} - (n+2 + \lambda N)\pi_{n+1} + \lambda(1+\lambda)(N-n) \pi_n = 0
	\end{equation*}
	for $1\le n\le N-1$ where $\pi_{N+1} = 0$.
	Then, we have%
	\footnote{We note that the solution presented here 
			slightly differs from that given in the original paper.
		By carefully examining the computations line-by-line,
			one can verify that our solution is indeed consistent with the stated recurrence relation.}
	\begin{equation}\label{eq:recurrence solution}
	\begin{split}
		\pi_n
		=& - \frac{b}{\lambda(1+2\lambda)}
		\biggl[\sum_{j=n}^{N-1} (-1)^{j-n} \binom{N-1}{j}\binom{j}{n}
			B\Bigl(\tfrac{1}{1+2\lambda},j+1\Bigr)
			\Bigl(\frac{\lambda}{1+2\lambda}\Bigr)^j\biggr]
			(1+\lambda)^n \\
		&- \frac{b}{1+2\lambda}
		\biggl[\sum_{j=n-1}^{N-1} (-1)^{j-n+1} \binom{N-1}{j}\binom{j}{n-1}
			B\Bigl(\tfrac{1}{1+2\lambda},j+1\Bigr)
			\Bigl(\frac{\lambda}{1+2\lambda}\Bigr)^j\biggr]
			(1+\lambda)^{n-1} \\
		&- \frac{c - \frac{b}{\lambda}}{1+2\lambda}
			\biggl[\sum_{j=n}^{N} (-1)^{j-n} \binom{N}{j}\binom{j}{n}
				B\Bigl(\tfrac{1}{1+2\lambda},j+1\Bigr)
				\Bigl(\frac{\lambda}{1+2\lambda}\Bigr)^j\biggr]
				(1+\lambda)^{n}
	\end{split}
	\end{equation}
	for $1\le n\le N$,
	and
	\begin{equation}\label{eq:recurrence solution at 0}
	\begin{split}
		\pi_0
		=& - \frac{b}{\lambda(1+2\lambda)}
		\sum_{j=0}^{N-1} (-1)^{j} \binom{N-1}{j}
			B\Bigl(\tfrac{1}{1+2\lambda},j+1\Bigr)
			\Bigl(\frac{\lambda}{1+2\lambda}\Bigr)^j \\
		&- \frac{c - \frac{b}{\lambda}}{1+2\lambda}
			\sum_{j=0}^{N} (-1)^{j} \binom{N}{j}
				B\Bigl(\tfrac{1}{1+2\lambda},j+1\Bigr)
				\Bigl(\frac{\lambda}{1+2\lambda}\Bigr)^j,
	\end{split}
	\end{equation}
	where $B(a,b)$ is a beta function. 
	Here, $b = 2\pi_2 - \lambda N(\lambda N + 1 - \lambda)\pi_0$ and $c = -\pi_0$
	and they also satisfy the equation 
	\begin{equation}\label{eq:recurrence solution constant relation}
	\begin{split}
		1
		={}& \frac{c}{1+2\lambda}
			\sum_{j=0}^N \binom{N}{j}
			B\Bigl(\tfrac{1}{1+2\lambda},j+1\Bigr)
			\Bigl(\frac{\lambda^2}{1+2\lambda}\Bigr)^j \\
		&+ \frac{(1+\lambda)b}{\lambda(1+2\lambda)}
			\sum_{j=0}^{N-1} \binom{N-1}{j}
			B\Bigl(\tfrac{1}{1+2\lambda},j+1\Bigr)
			\Bigl(\frac{\lambda^2}{1+2\lambda}\Bigr)^j \\
		&- \frac{b}{\lambda(1+2\lambda)}
			\sum_{j=0}^{N} \binom{N}{j}
			B\Bigl(\tfrac{1}{1+2\lambda},j+1\Bigr)
			\Bigl(\frac{\lambda^2}{1+2\lambda}\Bigr)^j.
	\end{split}
	\end{equation}
\end{proposition}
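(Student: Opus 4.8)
\emph{Proof strategy.} The plan is to follow the generating-function computation of \cite{cator}. Since the coefficients $n+2$, $n+2+\lambda N$, $\lambda(1+\lambda)(N-n)$ in the recurrence are affine in $n$, the generating function $\phi(x)=\sum_{n=0}^{N}\pi_n x^n$ satisfies a \emph{first-order} linear ODE: multiplying the recurrence by $x^n$, summing over $n\ge 0$, and using the factorization $1-x-\lambda(1+\lambda)x^2=\bigl(1-(1+\lambda)x\bigr)\bigl(1+\lambda x\bigr)$, one obtains
\begin{equation*}
	\bigl(1-(1+\lambda)x\bigr)\bigl(1+\lambda x\bigr)\,\phi'(x)+\bigl(\lambda(1+\lambda)Nx-1-\lambda N\bigr)\phi(x)=bx-\pi_0 .
\end{equation*}
The only delicate point is that the recurrence holds merely on $1\le n\le N-1$: its left-hand side at $n=0$ equals $2\pi_2-(2+\lambda N)\pi_1+\lambda(1+\lambda)N\pi_0$, which, after substituting $\pi_1=\lambda N\pi_0$, is exactly $b=2\pi_2-\lambda N(\lambda N+1-\lambda)\pi_0$; this ``defect'' is what makes the equation inhomogeneous, while $\pi_1=\lambda N\pi_0$ provides the initial data ($\phi(0)=\pi_0=-c$), and $\pi_{N+1}=\pi_{N+2}=0$ produces no boundary contributions at the top.

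Next I would solve this ODE by the integrating-factor method. A partial-fraction decomposition of $\dfrac{\lambda(1+\lambda)Nx-1-\lambda N}{(1-(1+\lambda)x)(1+\lambda x)}$ shows that the integrating factor is $\bigl(1-(1+\lambda)x\bigr)^{\frac1{1+2\lambda}}\bigl(1+\lambda x\bigr)^{-N-\frac1{1+2\lambda}}$ --- this is precisely where the exponent $\tfrac1{1+2\lambda}$, and hence ultimately the Beta values $B\bigl(\tfrac1{1+2\lambda},j+1\bigr)$, enter. Requiring the solution to be a polynomial of degree $\le N$ forces a linear relation between $b$ and $\pi_0=-c$ (the relevant integral must vanish at the branch point $x=\tfrac1{1+\lambda}$), and a change of variables then puts $\phi$ into the Euler-type form
\begin{equation*}
	\phi(x)=-\frac{1}{1+2\lambda}\int_0^1 t^{-\frac{2\lambda}{1+2\lambda}}\,\bigl(1+q(x,t)\bigr)^{N-1}\Bigl[\tfrac{b}{\lambda}\bigl(1+\lambda x\bigr)+\bigl(c-\tfrac{b}{\lambda}\bigr)\bigl(1+q(x,t)\bigr)\Bigr]\,dt,\quad q(x,t)=\tfrac{\lambda(1-t)}{1+2\lambda}\bigl((1+\lambda)x-1\bigr),
\end{equation*}
which is manifestly a polynomial in $x$ of degree $\le N$ and can also be checked directly against the ODE.

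Finally I would extract $\pi_n=[x^n]\phi(x)$ in three moves: (i) expand $\bigl(1+q(x,t)\bigr)^{N-1}$ and $\bigl(1+q(x,t)\bigr)^{N}$ as binomial series in powers of $1-t$, producing $\binom{N-1}{j}$, $\binom{N}{j}$ and $\bigl(\tfrac{\lambda}{1+2\lambda}\bigr)^j$; (ii) integrate term by term using $\int_0^1 t^{a-1}(1-t)^j\,dt=B(a,j+1)$ with $a=\tfrac1{1+2\lambda}$; (iii) expand $\bigl((1+\lambda)x-1\bigr)^j$ in powers of $x$, producing $\binom{j}{n}(-1)^{j-n}(1+\lambda)^n$. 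The three resulting contributions --- from $\tfrac{b}{\lambda}$, from $bx$ (which produces the shift $n\mapsto n-1$ and the factor $(1+\lambda)^{n-1}$ seen in the middle block of \eqref{eq:recurrence solution}), and from $(c-\tfrac{b}{\lambda})(1+q)$ --- assemble into \eqref{eq:recurrence solution}, and into \eqref{eq:recurrence solution at 0} at $n=0$ (where the middle block drops out because $\binom{j}{-1}=0$); the index restrictions on $j$ make every sum empty at $n=N+1$, so $\pi_{N+1}=0$ needs no argument, and $c=-\pi_0$ is simply $\phi(0)=\pi_0$. Evaluating $\phi(1)=\sum_n\pi_n=1$ and using $\sum_{n=0}^j\binom{j}{n}(-1)^{j-n}(1+\lambda)^n=\lambda^j$ (which converts each $\bigl(\tfrac{\lambda}{1+2\lambda}\bigr)^j$ into $\bigl(\tfrac{\lambda^2}{1+2\lambda}\bigr)^j$) then yields the relation \eqref{eq:recurrence solution constant relation}. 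I expect the main obstacle to be precisely this resummation bookkeeping: one must interchange two sums with an integral and keep the three pieces correctly aligned so that the analytic solution of the ODE matches the combinatorial formula term by term, and the branch point at $x=\tfrac1{1+\lambda}$ must be handled so that $\phi$ is genuinely a polynomial. As a cross-check --- the route hinted at in the footnote --- one can instead verify \eqref{eq:recurrence solution} a posteriori by direct substitution into the recurrence, using the contiguous relation $B(a,j+2)=\tfrac{j+1}{a+j+1}B(a,j+1)$ together with Pascal's rule, and then deduce $\pi_1=\lambda N\pi_0$ by comparing \eqref{eq:recurrence solution} at $n=1$ with \eqref{eq:recurrence solution at 0}.
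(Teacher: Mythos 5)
Your proposal follows essentially the same route as the paper: form the generating function, observe that the recurrence with affine coefficients yields a first-order linear ODE whose inhomogeneity $bz+c$ is exactly the ``defect'' of the recurrence at $n=0$ (the paper passes over this quickly, but your bookkeeping of why $b=2\pi_2-\lambda N(\lambda N+1-\lambda)\pi_0$ appears is a nice clarification), solve by integrating factor (the paper phrases this as variation of constants, same thing), change variables to expose the Euler/Beta integral, and expand $\bigl(1+q(x,t)\bigr)^{N-1}$ and $\bigl(1+q(x,t)\bigr)^{N}$ binomially in $1-t$ and in $x$ to read off $\pi_n$. Your explicit Euler-type form for $\phi(x)$ is the same object as the paper's \eqref{eq:recurrence solution integral} after the substitution $t\mapsto 1-t$.

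The one place you diverge is the derivation of \eqref{eq:recurrence solution constant relation}: the paper obtains it from the requirement that $G(z)$ be analytic at the branch point $z=\tfrac1{1+\lambda}$ (i.e.\ $C(r_1)=0$), \emph{after} having already used $G(1)=1$ to fix the integration constant; you instead use analyticity to fix the integration constant and then invoke $\phi(1)=\sum_n\pi_n=1$ to produce the relation. These are dual bookkeepings of the same two constraints and either is legitimate. One small caution: if you actually sum \eqref{eq:recurrence solution} over $n$ using $\sum_{n=0}^{j}\binom{j}{n}(-1)^{j-n}(1+\lambda)^n=\lambda^j$ as you propose, the overall sign of every term you obtain is the opposite of what is printed in \eqref{eq:recurrence solution constant relation}; the footnote in the statement already flags that these formulas were reworked from \cite{cator}, so this is worth flagging as a likely sign typo in the displayed relation rather than an error on your end, but you should say explicitly which sign your route produces so the reader can reconcile the two.
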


\begin{remark}
	The first summation in the expression \eqref{eq:recurrence solution} can be transformed as follows.
	\begin{align*}
		&\quad \sum_{j=n}^{N-1} (-1)^{j-n} \binom{N-1}{j}\binom{j}{n}
			B\Bigl(\tfrac{1}{1+2\lambda},j+1\Bigr)
			\Bigl(\frac{\lambda}{1+2\lambda}\Bigr)^j \\
		&= \binom{N-1}{n}\Bigl(\frac{\lambda}{1+2\lambda}\Bigr)^{n}
		\int_0^1 \sum_{l=0}^{N-n-1} (-1)^l \binom{N-n-1}{l}
			\Bigl(\frac{\lambda}{1+2\lambda}\Bigr)^l
			t^{l+n}(1-t)^{-\frac{2\lambda}{1+2\lambda}} \,dt \\
		&= \binom{N-1}{n}\Bigl(\frac{\lambda}{1+2\lambda}\Bigr)^{n}
		\int_0^1 t^{n}\Bigl(1 - \frac{\lambda}{1+2\lambda}t\Bigr)^{N-n-1}
			(1-t)^{-\frac{2\lambda}{1+2\lambda}} \,dt \\
		&= \frac{1+2\lambda}{\lambda} \binom{N-1}{n}
			\int_0^{\frac{\lambda}{1+2\lambda}} u^{n}(1-u)^{N-n-1}
				\Bigl(1 - \frac{1+2\lambda}{\lambda}u\Bigr)^{-\frac{2\lambda}{1+2\lambda}} \,du.
	\end{align*}
	By performing a similar transformation for the other two lines, we obtain an alternative integral expression for the solution:
	\begin{equation}\label{eq:recurrence solution integral}
	\begin{split}
		\pi_n
		=& -\frac{b}{\lambda^2}\binom{N-1}{n} (1+\lambda)^n
			\int_0^{\frac{\lambda}{1+2\lambda}}u^n(1-u)^{N-n-1}
				\Bigl(1 - \frac{1+2\lambda}{\lambda}u\Bigr)^{-\frac{2\lambda}{1+2\lambda}} \,du \\
		& -\frac{b}{\lambda}\binom{N-1}{n-1} (1+\lambda)^{n-1}
		\int_0^{\frac{\lambda}{1+2\lambda}}u^{n-1}(1-u)^{N-n-1}
			\Bigl(1 - \frac{1+2\lambda}{\lambda}u\Bigr)^{-\frac{2\lambda}{1+2\lambda}} \,du \\
		& -\frac{c - \frac{b}{\lambda}}{\lambda}\binom{N}{n} (1+\lambda)^n
			\int_0^{\frac{\lambda}{1+2\lambda}}u^n(1-u)^{N-n}
				\Bigl(1 - \frac{1+2\lambda}{\lambda}u\Bigr)^{-\frac{2\lambda}{1+2\lambda}} \,du.
	\end{split}
	\end{equation}
	Similarly, from equation \eqref{eq:recurrence solution at 0}, we obtain
	\begin{equation}\label{eq:recurrence solution at 0 integral}
	\begin{split}
		\pi_0
		=& - \frac{b}{\lambda(1+2\lambda)}
			\int_0^1
				\Bigl(1 - \frac{\lambda}{1+2\lambda}t\Bigr)^{N-1}
				(1-t)^{-\frac{2\lambda}{1+2\lambda}} \,dt \\
		& - \frac{c - \frac{b}{\lambda}}{1+2\lambda}
			\int_0^1
				\Bigl(1 - \frac{\lambda}{1+2\lambda}t\Bigr)^{N}	
				(1-t)^{-\frac{2\lambda}{1+2\lambda}} \,dt.
	\end{split}
	\end{equation}
\end{remark}

\begin{proof}[Proof of Proposition \ref{prop:recurrence solution}]
	Define a generating function
	\[
		G(z) = \sum_{n=0}^N \pi_n z^n
	\]
	so that $G(1) = 1$. The recurrence relation for the sequence $(\pi_n)_{0\le n\le N}$ yields the differential equation
	\[
		(1-z-\lambda(1+\lambda)z^2)G'(z)
		+(\lambda(1+\lambda)Nz - (1+\lambda N))G(z)
		= bz + c,
	\]
	where $b = 2\pi_2 - \lambda N(\lambda N + 1 - \lambda)\pi_0$ and $c = -\pi_0$.

	The homogeneous solution of the equation
	\[
		(1-z-\lambda(1+\lambda)z^2)G_h'(z)
		+(\lambda(1+\lambda)Nz - (1+\lambda N))G_h(z)
		= 0,
	\]
	which can be found by separating variables, is
	\[
		G_h(z) = C(z-r_1)^{c_1}(z-r_2)^{c_2},
	\]
	where $C$ is a constant and
	\[
		\begin{cases} \smallskip
			r_1 = \frac{1}{1+\lambda}, \\ \smallskip
			r_2 = - \frac{1}{\lambda},
		\end{cases}
		\qquad
		\begin{cases} \smallskip
			c_1 = -\frac{1}{1+2\lambda}, \\ \smallskip
			c_2 = N + \frac{1}{1+2\lambda}.
		\end{cases}
	\]
	
	Now we variate the constant so that we express $G(z) = C(z)G_h(z)$.
	Note that
	\[
		C(1) = \Bigl(\frac{\lambda}{1+\lambda}\Bigr)^{N + \frac{2}{1+2\lambda}}.
	\]
	Putting into the primary differential equation gives
	\[
		C'(z)
		= -\frac{bz + c}{\lambda(1+\lambda)(z-r_1)^{c_1+1}(z-r_2)^{c_2+1}}.
	\]
	We may integrate both sides from $1$ to $z$ to obtain
	\[
		C(z)
		= \Bigl(\frac{\lambda}{1+\lambda}\Bigr)^{N + \frac{2}{1+2\lambda}}
		- \frac{1}{\lambda(1+\lambda)}
		\int_1^z \frac{bu + c}{(u-r_1)^{c_1+1}(u-r_2)^{c_2+1}} \,du.
	\]
	By changing variables with $t = \frac{1+2\lambda}{1+\lambda}\frac{1}{1 + \lambda u}$,
		we obtain
	\begin{equation}\label{eq:variate constant}
	\begin{split}
		C(z)
		={}& \Bigl(\frac{\lambda}{1+\lambda}\Bigr)^{N + \frac{2}{1+2\lambda}} \\
		&- \frac{b}{\lambda(1+\lambda)}
			\Bigl(\frac{\lambda(1+\lambda)}{1+2\lambda}\Bigr)^N
			\Bigl[B\Bigl(\tfrac{1+2\lambda}{(1+\lambda)(1+\lambda z)} ;
				\tfrac{1}{1+2\lambda}, N\Bigr)
			- B\Bigl(\tfrac{1+2\lambda}{(1+\lambda)^2} ;
				\tfrac{1}{1+2\lambda}, N\Bigr)\Bigr] \\
		&- \frac{c - \frac{b}{\lambda}}{1+2\lambda}
			\Bigl(\frac{\lambda(1+\lambda)}{1+2\lambda}\Bigr)^N
			\Bigl[B\Bigl(\tfrac{1+2\lambda}{(1+\lambda)(1+\lambda z)} ;
				\tfrac{1}{1+2\lambda}, N+1\Bigr)
			- B\Bigl(\tfrac{1+2\lambda}{(1+\lambda)^2} ;
				\tfrac{1}{1+2\lambda}, N+1\Bigr)\Bigr],
	\end{split}
	\end{equation}
	where $B(x;a,b)$ denotes the incomplete beta function.
	We note that for a nonnegative integer $k$, the identity
	\[
		B(k,b) - B(x;k,b)
		= \sum_{j=0}^{k-1} \binom{k-1}{j} B(j+1,b) x^{k-1-j} (1-x)^{j+b}
	\]
	holds, and this can be readily shown by induction on $k$.
	After applying this identity to \eqref{eq:variate constant},
		we obtain the solution \eqref{eq:recurrence solution} by expanding.
	The relation \eqref{eq:recurrence solution constant relation} follows by
		checking analyticity of $G(z)$.
\end{proof}

\subsection{Asymptotic Analysis}\label{subsection:asymptotic}

\filbreak
\begin{theorem}[Asymptotic behavior of the quasi-stationary measure, high infection regime]\label{thm:stationary asymptotic high}
	Let
	\begin{equation*}
		v_n^\mathrm{high} = \binom{N}{n} \lambda^{n-N} f_{N,\lambda}(n),
	\end{equation*}
	where
	\begin{equation*}
		f_{N,\lambda}(n)
		= \prod_{k=n}^{N-1}
			\Bigl[1 + \frac{1}{(1+2\lambda)(k+1)-\lambda N}\Bigr].
	\end{equation*}
	Then, the scaled quasi-stationary measure $v_n = \mu(1,n)$ at states with an infected hub
		satisfies the following estimates.
	
	\begin{enumerate}
		\item Let $\varepsilon > 0$ be given, and let $\delta = \varepsilon N$.
		Then, for $n \in [\frac{\lambda}{1+2\lambda}N + \delta,N]$, we have
		\begin{equation*}
			v_n = v_n^\mathrm{high} (1 + O(N^{-1}))
		\end{equation*}
		as $N\to\infty$, where the error term is uniform in $n$.

		\item Let $0 < a < \frac{1}{2}$ be given, and let $\delta = N^{\frac{1}{2} + a}$.
		Then, for $n \in [\frac{\lambda}{1+2\lambda}N + \delta,N]$, we have
		\begin{equation*}
			v_n = v_n^\mathrm{high} (1 + O(N^{-2a}))
		\end{equation*}
		as $N\to\infty$, where the error term is uniform in $n$.
	\end{enumerate}
\end{theorem}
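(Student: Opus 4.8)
The plan is to pass to the backward ratios $\rho_n := v_{n+1}/v_n$, which satisfy a first-order nonlinear recurrence, to compare them with $\rho_n^{\mathrm{high}} := v_{n+1}^{\mathrm{high}}/v_n^{\mathrm{high}}$, and to control carefully how the discrepancy propagates. Dividing the last relation of \eqref{eq:recurrence vn} by $v_n$ gives
\begin{equation*}
	\rho_n = \frac{\lambda(1+\lambda)(N-n)}{(n+2+\lambda N)-(n+2)\rho_{n+1}},\qquad \rho_N=\frac{v_{N+1}}{v_N}=0,
\end{equation*}
and likewise $\rho_N^{\mathrm{high}}=0$ because $\binom{N}{N+1}=0$ and $v_N^{\mathrm{high}}=1$. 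Since $v_N=v_N^{\mathrm{high}}=1$, one has $v_n/v_n^{\mathrm{high}}=\prod_{k=n}^{N-1}(\rho_k^{\mathrm{high}}/\rho_k)$, so the theorem will follow once we prove $\sum_{k=n}^{N-1}|\rho_k-\rho_k^{\mathrm{high}}|/\rho_k^{\mathrm{high}}=O(N/\delta^2)$ uniformly in $n$ over the regime; taking $\delta=\varepsilon N$ then yields (1) and $\delta=N^{1/2+a}$ yields (2).

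Write $P_n:=(1+2\lambda)(n+1)-\lambda N$ and $n_0:=\lambda N/(1+2\lambda)$, so $P_n=(1+2\lambda)(n+1-n_0)\asymp n-n_0$. A direct computation shows that $v^{\mathrm{high}}$ solves \eqref{eq:recurrence vn} up to the explicit residual
\begin{equation*}
	(n+2)v_{n+2}^{\mathrm{high}}-(n+2+\lambda N)v_{n+1}^{\mathrm{high}}+\lambda(1+\lambda)(N-n)v_n^{\mathrm{high}}
	=\frac{2\lambda(1+\lambda)(N-n-1)}{P_n(P_n+2+2\lambda)}\,v_{n+1}^{\mathrm{high}},
\end{equation*}
so $\rho_n^{\mathrm{high}}$ satisfies the ratio recurrence with a defect $D_n$ of size $|D_n|\lesssim \rho_n^{\mathrm{high}}/(n-n_0)^2$ in the regime, and $|D_n|\lesssim (N-n)^2/N^4$ near $n=N$. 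Subtracting and linearizing, $\phi_n:=\rho_n-\rho_n^{\mathrm{high}}$ obeys $\phi_n=\Lambda_n\phi_{n+1}-D_n$ with $\phi_N=0$ and $\Lambda_n:=(n+2)\rho_n\big/\big((n+2+\lambda N)-(n+2)\rho_{n+1}^{\mathrm{high}}\big)$, whence $\phi_n=-\sum_{m=n}^{N-1}\big(\prod_{k=n}^{m-1}\Lambda_k\big)D_m$. A crude induction from $n=N$ downward (using $\Lambda_n\le 1$ in the regime, an elementary bound on $\rho_n^{\mathrm{high}}$) gives the a priori estimate $\rho_n=\rho_n^{\mathrm{high}}(1+O(1/\delta))$; this lets one replace $\Lambda_n$ by its value at $\rho^{\mathrm{high}}$ up to negligible corrections, and a Taylor expansion of the latter about $n=n_0$ yields $\Lambda_n\le 1-c\,(n-n_0)/N$ on the relevant range.

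Interchanging summation,
\begin{equation*}
	\sum_{k=n}^{N-1}\frac{|\phi_k|}{\rho_k^{\mathrm{high}}}
	\le\sum_{m=n}^{N-1}|D_m|\sum_{k=n}^{m}\frac{1}{\rho_k^{\mathrm{high}}}\prod_{j=k}^{m-1}\Lambda_j,
\end{equation*}
and the bound $\prod_{j=k}^{m-1}\Lambda_j\le\exp\!\big(-c\,((m-n_0)^2-(k-n_0)^2)/N\big)$ forces the inner sum to be concentrated on indices $k$ within $O(N/(m-n_0))$ of $m$, hence to be of order $N/(m-n_0)$; combined with $|D_m|\lesssim(m-n_0)^{-2}$ on the bulk and the sharper decay near $n=N$, the outer sum is $O\big(N\sum_{m}(m-n_0)^{-3}\big)=O(N/\delta^2)$. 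Since every $D_m<0$ we have $\phi_k\ge 0$, so $-\log(v_n/v_n^{\mathrm{high}})=\sum_{k=n}^{N-1}\log(1+\phi_k/\rho_k^{\mathrm{high}})=O(N/\delta^2)$, uniformly in $n$, which is exactly the claim.

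The hard part is this last estimate, specifically establishing the Gaussian-type decay of $\prod_j\Lambda_j$ with the correct ``width'' $N/(m-n_0)$ near the lower edge $n\approx n_0$: this requires a careful expansion of $\Lambda_n$ there and a check that the lower-order $O(1/P_n)$ corrections do not destroy the bound $1-\Lambda_n\gtrsim(n-n_0)/N$ — without it one only obtains the far too weak $O(\log N)$ in place of $O(N/\delta^2)$, and the refinement to $O(N^{-2a})$ in (2) would be lost. The mild circularity in the error recurrence (the coefficient $\Lambda_n$ involves the true $\rho_n$) is dispatched by the bootstrap and is routine.
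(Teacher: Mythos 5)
Your proposal follows essentially the same route as the paper's proof, just packaged differently. The paper's quantity $y_n = (v_n/v_n^{\mathrm{high}})/(v_{n+1}/v_{n+1}^{\mathrm{high}})$ is exactly $\rho_n^{\mathrm{high}}/\rho_n$ in your notation, and both arguments hinge on the same two quantitative inputs: the defect of $v^{\mathrm{high}}$ against the three-term recurrence is $O((n-n_0)^{-2})$ (your explicit identity $\widetilde{D}_n = 2\lambda(1+\lambda)(N-n-1)/\big(P_n(P_n+2+2\lambda)\big)$ is correct), and the step-to-step transfer coefficient contracts by $1-c(n-n_0)/N$ near $n_0 = \lambda N/(1+2\lambda)$. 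The difference is in how these are combined. The paper writes a direct recursion for $y_n-1$ (equation \eqref{eq:yn recurrence high}) and proves the pointwise bound $|y_n-1|\le BN/(n-n_0)^3$ by a one-shot downward induction (with a simplified variant $|y_n-1|\le B/N^2$ for $\delta = \varepsilon N$), then just sums $|\log y_k|$. You instead solve the linearized error equation $\phi_n=\Lambda_n\phi_{n+1}-D_n$ by iteration, interchange sums, and estimate the double sum via the Gaussian-type decay of $\prod_j\Lambda_j$. Your route is valid and handles both parameter regimes at once, whereas the paper's pointwise induction is a little more elementary and avoids the concentration computation. Two small cautions on your write-up: first, the inner-sum estimate ``of order $N/(m-n_0)$'' implicitly uses $\rho_k^{\mathrm{high}}\gtrsim 1$, which fails as $k\to N$ (indeed $\rho_{N-1}^{\mathrm{high}}\asymp\lambda/N$); the argument survives because $|D_m|$ vanishes quadratically in $N-m$ near $m=N$, but this compensation should be made explicit. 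Second, the a priori bootstrap $\rho_n=\rho_n^{\mathrm{high}}(1+O(1/\delta))$ needs $\Lambda_n\le 1$, which itself depends on $\rho_n$; the paper sidesteps this cleanly by keeping the coefficient of $y_{n+1}-1$ on the right-hand side as the fixed number $\lambda(N-n-1)/((1+\lambda)(n+1))$, with the only circularity in the harmless $1/y_{n+1}$ factor, so you may find it simpler to mirror that organization.
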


\begin{theorem}[Asymptotic behavior of the quasi-stationary measure, low infection regime]\label{thm:stationary asymptotic low}
	Let
	\begin{equation*}
		v_n^\mathrm{low} = C_{N,\lambda} (1+\lambda)^n g_{N,\lambda}(n),
	\end{equation*}
	where
	\begin{equation*}
		g_{N,\lambda}(n)
		= \Bigl(
			1 - \frac{1+2\lambda}{\lambda}\frac{n}{N}
		\Bigr)^{-\frac{2\lambda}{1+2\lambda}}
	\end{equation*}
	and
	\begin{equation*}
		C_{N,\lambda}^{-1}
		= \frac{\lambda}{1+2\lambda}
			N B\Bigl(\tfrac{1}{1+2\lambda}, N\Bigr)
			\Bigl(\frac{\lambda(1+\lambda)}{1+2\lambda}\Bigr)^N.
	\end{equation*}
	Then, the scaled quasi-stationary measure $v_n = \mu(1,n)$ at states with an infected hub
		satisfies the following estimates.
	
	\begin{enumerate}
		\item Let $\varepsilon > 0$ be given, and let $\delta = \varepsilon N$.
		Then, for $n \in [0, \frac{\lambda}{1+2\lambda}N-\delta]$, we have
		\begin{equation*}
			v_n = v_n^\mathrm{low} (1 + O(N^{-1}))
		\end{equation*}
		as $N\to\infty$, where the error term is uniform in $n$.

		\item Let $0 < a < \frac{1}{2}$ be given, and let $\delta = N^{\frac{1}{2} + a}$.
		Then, for $n \in [0, \frac{\lambda}{1+2\lambda}N - \delta]$, we have
		\begin{equation*}
			v_n = v_n^\mathrm{low} (1 + O(N^{-2a}))
		\end{equation*}
		as $N\to\infty$, where the error term is uniform in $n$.
	
		\item Let $0 < a < \frac{1}{2}$ be given, and let $\delta = N^{\frac{1}{2} + a}$.
		Then, for $n \in [\frac{\lambda}{1+2\lambda}N-\delta,\frac{\lambda}{1+2\lambda}N + \delta]$, we have
		\begin{equation*}
			C_{N,\lambda} (1+\lambda)^n e^{-2\delta}
			\ll v_n
			\lesssim C_{N,\lambda} (1+\lambda)^n
				\max\Bigl\{
					N^{\frac{\lambda}{1+2\lambda}},
					N^\frac{1}{2}
					\Bigl|
						\frac{\lambda}{1+2\lambda}-\frac{n}{N}
					\Bigr|^{\frac{1}{1+2\lambda}}
				\Bigr\}
		\end{equation*}
		as $N\to\infty$, where the error term is uniform in $n$.
	\end{enumerate}
\end{theorem}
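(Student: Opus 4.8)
The plan is to base everything on the closed-form integral representation \eqref{eq:recurrence solution integral} for $\pi_n$ (and \eqref{eq:recurrence solution at 0 integral} for $\pi_0$), together with the Cator--Mieghem proportionality $v_n=\pi_n/\pi_N$, valid for $1\le n\le N$. Write $\theta=\tfrac{\lambda}{1+2\lambda}$, $\gamma=\tfrac{2\lambda}{1+2\lambda}$ and $w(u)=\bigl(1-\tfrac u\theta\bigr)^{-\gamma}$, so that $w(n/N)=g_{N,\lambda}(n)$. Each of the three summands in \eqref{eq:recurrence solution integral} is a binomial coefficient times a power of $1+\lambda$ times an integral of the form $\int_0^\theta u^m(1-u)^M\,w(u)\,du$, with $(m,M)$ running over $(n,N-n-1)$, $(n-1,N-n)$, $(n,N-n)$. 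Dividing by $B(m+1,M+1)$ turns the density into a beta-distributed variable $U$ with mean $\simeq n/N$ and standard deviation $\sigma_*\asymp N^{-1/2}$ (smaller near $n=0$), so the argument is a quantitative Laplace analysis of these beta integrals. In parts (1) and (2) the peak $n/N$ lies to the left of the singular endpoint $\theta$ at distance $\gtrsim\varepsilon$, respectively $\gtrsim N^{-1/2+a}$, so we are at an interior maximum.

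I would run the Laplace expansion termwise. The elementary identities $\binom{N-1}{n}B(n+1,N-n)=\binom{N-1}{n-1}B(n,N-n+1)=\tfrac1N$ and $\binom{N}{n}B(n+1,N-n+1)=\tfrac1{N+1}$ strip off the combinatorial prefactors, and it then remains to establish two facts. First, that restricting the mass to $[0,\theta]$ and inserting $w$ amounts, up to a relatively exponentially small error, to multiplying by $\EE\,w(U)$; the error comes from the beta tail beyond $\theta$ together with the finiteness of $\int_0^\theta w(u)\,du=\lambda$, and is controlled by a Chernoff bound because $n/N$ is bounded away from $\theta$. Second, that $\EE\,w(U)$ equals $w(n/N)=g_{N,\lambda}(n)$ up to the stated relative error: expanding $\log w$ about $n/N$, the linear term vanishes by centering, and since $(\log w)'(u)=\tfrac{\gamma}{\theta-u}$ and $(\log w)''(u)\asymp(\theta-u)^{-2}$, the quadratic term is of order $\sigma_*^2/(\theta-n/N)^2\asymp\bigl(N^{1/2}|\theta-n/N|\bigr)^{-2}$, which is $O(N^{-1})$ when $|\theta-n/N|\gtrsim\varepsilon$ and $O(N^{-2a})$ when $|\theta-n/N|\gtrsim N^{-1/2+a}$; the higher central moments of $U$ are of order $N^{-k/2}$, so the remaining terms form a convergent geometric-type series in $\bigl(N^{1/2}|\theta-n/N|\bigr)^{-1}$ and do not worsen the rate. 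Summing the three contributions, each equals $(1+\lambda)^n g_{N,\lambda}(n)$ times an $O(N^{-1})$ constant assembled from $b$ and $c$, and the crucial point is that the $n$-dependence collapses exactly to the factor $(1+\lambda)^n g_{N,\lambda}(n)$, matching the shape of $v_n^{\mathrm{low}}$.

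It remains to identify the prefactor, hence the explicit constant $C_{N,\lambda}$. The constraint relation \eqref{eq:recurrence solution constant relation} (equivalently the analyticity of $G$ in Proposition \ref{prop:recurrence solution}) and the value $\pi_0$ in \eqref{eq:recurrence solution at 0 integral} both reduce, under a Laplace analysis now centered at the endpoint $t=0$, to integrals of the form $\int_0^1(1-t)^{-\gamma}(1+xt)^N\,dt$. Using $\int_0^\infty s^{\frac1{1+2\lambda}-1}e^{-as}\,ds=\Gamma\!\bigl(\tfrac1{1+2\lambda}\bigr)a^{-\frac1{1+2\lambda}}$, these evaluate to $\Gamma\!\bigl(\tfrac1{1+2\lambda}\bigr)$ times explicit powers of $N$ and of $\tfrac{(1+\lambda)^2}{1+2\lambda}$, and, compared against a Stirling estimate of $B\!\bigl(\tfrac1{1+2\lambda},N\bigr)$, they pin down the asymptotics of $b$ and $c$ and therefore of $v_n=\pi_n/\pi_N$ with precisely the constant $C_{N,\lambda}$ in the statement (one may also read $\pi_N$ off \eqref{eq:recurrence solution integral} directly, but that is a high-regime computation in the spirit of Theorem \ref{thm:stationary asymptotic high}).

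Finally, part (3) treats the window $\bigl|n-\tfrac{\lambda}{1+2\lambda}N\bigr|\le N^{1/2+a}$, which straddles the boundary with the high regime, and here the interior-maximum picture degenerates: the beta peak lies within $O(N^{-1/2+a})$ of $\theta$, so only part of the Gaussian bulk fits into $[0,\theta]$ and $w$ is no longer essentially constant there. I would split $\int_0^\theta$ into a bulk piece on $[0,\theta-N^{-1/2}\log N]$, handled by the estimates above, and a boundary layer on $[\theta-N^{-1/2}\log N,\theta]$, on which I would bound $w(u)=\theta^{\gamma}(\theta-u)^{-\gamma}$ directly against the local Gaussian. The surviving truncated-Gaussian mass yields the lower bound, with the deliberately crude factor $e^{-2\delta}$ emerging from iterating elementary two-sided bounds on consecutive ratios $v_n/v_{n\pm1}$ across the window; integrating the integrable singularity $(\theta-u)^{-\gamma}$ over a scale-$N^{-1/2}$ neighborhood against the Gaussian tail yields the upper bound with the $\max\{N^{\gamma/2},\,N^{1/2}|\theta-n/N|^{1-\gamma}\}$ behaviour, where $\tfrac\gamma2=\tfrac{\lambda}{1+2\lambda}$ and $1-\gamma=\tfrac1{1+2\lambda}$ recover the exponents in the statement. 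I expect this boundary-layer analysis, and more generally the need to keep every error bound uniform in $n$ right up to the crossover, to be the main obstacle; once the beta identities, the beta moment bounds, and the large-deviation estimates are in place, the interior cases (1) and (2) are comparatively routine.
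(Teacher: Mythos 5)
Your starting point and overall strategy coincide with the paper's: both proofs are built on the integral representation \eqref{eq:recurrence solution integral} plus Laplace-type analysis, and both rely on \eqref{eq:b over pi0} to extract the asymptotics of $b,c$ and on an explicit evaluation to pin down $C_{N,\lambda}$ (as you note, effectively what Proposition~\ref{prop:vn initial value} does). Where you differ is in how the Laplace analysis is organized, and those differences come with specific obligations the proposal leaves unmet.

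\emph{The beta-moment framing.} The paper bundles all three integrals into a single factor $Q_n(t)$ and then does a textbook interior Laplace expansion via series reversion, producing the coefficients $a_{n,s}$ with explicit remainder estimates. You keep the three integrals separate and rephrase each as $B(m+1,M+1)\,\EE\,w(U)$ with $U$ a beta variable. That is a cleaner way to bookkeep the combinatorial factors --- and incidentally, your $(m,M)$ pairs and identities $\binom{N-1}{n}B(n+1,N-n)=\binom{N-1}{n-1}B(n,N-n+1)=\frac1N$, $\binom{N}{n}B(n+1,N-n+1)=\frac1{N+1}$ are the ones consistent with the paper's $Q_n$ definition; the middle integral in \eqref{eq:recurrence solution integral} as printed has $(1-u)^{N-n-1}$ where the derivation from \eqref{eq:recurrence solution} requires $(1-u)^{N-n}$, so you have silently corrected a typo. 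But in exchange you now owe a cancellation argument: with the corrected identities, each of the three terms carries a coefficient of size $\Theta(N)\pi_0(1+\lambda)^{n-1}g_{N,\lambda}(n)$, and you need the first and third to cancel to $O(1)\pi_0$ (using the two-term expansion of $b/\pi_0$ from \eqref{eq:b over pi0}) so that the surviving term is $\lambda N\pi_0(1+\lambda)^{n-1}g_{N,\lambda}(n)=\pi_1(1+\lambda)^{n-1}g_{N,\lambda}(n)$. Your phrase ``the $n$-dependence collapses exactly'' is asserting this but does not explain why the three constants sum to $\lambda N(1+O(N^{-1}))$; this is a computation that has to be carried out, and it uses \eqref{eq:b over pi0} to second order, not just leading order.

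\emph{The interval $[0,\varepsilon N]$.} The paper's Step~1 applies Laplace only on $n\ge\varepsilon N$, precisely because the uniform boundedness of the coefficients $p_{n,i}, q_{n,i}$ used to control the Laplace remainder requires $\beta=n/N$ bounded away from $0$; for $n\in[0,\varepsilon N]$ the paper switches to Step~2, a recurrence-propagation argument seeded at $n_0=\lfloor\varepsilon N\rfloor$ by the Step~1 output. You instead claim the moment expansion ``does not worsen the rate'' all the way down, appealing to the fact that $\sigma_*$ also shrinks near $n=0$. This is plausible but not established: near $n=0$ the relevant beta is Gamma-like (skewed, non-Gaussian), the three integrals' means differ from $n/N$ by $O(N^{-1})$, and the expansion must be centered carefully so that the linear term from the mean shift, of order $(\log w)'(n/N)\cdot O(N^{-1})$, contributes only $O(N^{-1})$. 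You also need to control $\EE e^{\log w(U)-\log w(n/N)}$ and not merely $\EE[\log w(U)-\log w(n/N)]$; the nonlinearity and the higher central moments must be bounded uniformly. None of this is spelled out, and it is exactly the regime the paper deliberately handled by a different method.

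\emph{Part (3).} Your upper bound (integrate $(\theta-u)^{-\gamma}$ against the local Gaussian approximation of the binomial) is the same mechanism as the paper's, which uses a quantitative local limit theorem (the Zolotukhin--Nagaev--Chebotarev bound) and then the two-regime estimate $\int_0^{\gamma_1}e^{-N(\gamma-u)^2}u^{-\gamma}\,du$. Your lower bound is different: you propose to iterate two-sided ratio bounds $v_{n\pm1}/v_n$ across the window, whereas the paper derives the $e^{-\delta(1+o(1))}$ lower bound directly from a crude pointwise lower bound on the integrand near the endpoint of the truncated interval followed by Stirling. The ratio-iteration route should also work (each ratio is bounded by an $N$-independent constant by the recurrence, and the window has width $O(\delta)$, so the loss factor is $e^{O(\delta)}$), but you would need to pin down the constant in $e^{O(\delta)}\le e^{2\delta}$ rather than hand-wave it as ``deliberately crude.''

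In short: your route is essentially the paper's, with a more probabilistic framing. The genuine gaps are (i) the unchecked cancellation among the three beta contributions, which is where the constant $\lambda N$ comes from, and (ii) the claim that the moment expansion works uniformly on $[0,\varepsilon N]$ without a separate argument there --- the paper's Step~2 exists precisely because the Laplace/moment picture degenerates in that range, and your proposal does not address why it doesn't for you.
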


\begin{remark}
	By applying Theorem \ref{thm:stationary asymptotic low} 
		to the recurrence relation \eqref{eq:recurrence vn-1}, 
		we obtain 
	\begin{equation}\label{eq:u0 asymptotic}
		\alpha \mu(0,0) = \lambda N C_{N,\lambda} (1 + O(N^{-1})).
	\end{equation}
\end{remark}

\begin{remark}
	The theorem highlights a significant difference in the asymptotic behavior of $v_n$
		between two distinct regimes:
		one characterized by a large number of infected leaves,
		and the other by a smaller number.
	The prefactors $f_{N,\lambda}(n)$ and $g_{N,\lambda}(n)$ exhibit subexponential growth with respect to $N$.
	Consequently, in the high infection regime,
		the dominant exponential factor of $v_n$ is $\binom{N}{n}\lambda^{n-N}$,
		whereas in the low infection regime,
		it is dominated by $C_{N,\lambda} (1+\lambda)^n$.

	To understand the rationale behind these exponential dominance factors of $v_n$,
		consider the dual modes of the dynamics, contingent on the status of the hub.
	When $n$ is sufficiently large,
		the infection rate of the hub, proportional to $n$,
		significantly exceeds its constant cure rate of $1$.
	This allows the approximation that the hub remains perpetually infected.
	Under this assumption, the leaves evolve independently,
		each following a Markov chain
		with a rate of infection of $\lambda$ and a recovery rate of $1$.
	Consequently, the distribution of $n$ converges rapidly to
		a binomial distribution $\operatorname{Binom}(N,\frac{\lambda}{1+\lambda})$,
		which explains the presence of the factor $\binom{N}{n}\lambda^{n-N}$.

	Conversely, in scenarios where $n$ is small,
		the mode characterized by a healthy hub 
		predominantly influences the energy landscape 
		despite its brief temporal occurrence.
	In such cases, when the hub is healthy,
		the subsequent transition in the process is either the reinfection of the hub
		with probability $\frac{\lambda}{1+\lambda}$
		or the recovery of an infected leaf with probability $\frac{1}{1+\lambda}$.
	Thus, once the hub is cured,
		the number of subsequently cured leaves before the hub is reinfected
		follows a (truncated) geometric distribution
		with success probability $\frac{\lambda}{1+\lambda}$,
		which accounts for the factor $(1+\lambda)^n$.

	The transition between these regimes occurs at a point where
		the influences of both modes are comparably significant.
	A pathwise approach, grounded in large deviation theory,
		indicates that this transitional state corresponds to when
		the proportion of infected leaves is approximately $\frac{\lambda}{1+2\lambda}$.
\end{remark}

\begin{remark}
	The sequence $(v_n)_{0\le n\le N}$ attains
		its global maximum at $n \simeq \frac{\lambda}{1+\lambda}N$
		and its local minima at $n = 0$ and $n = N$.
	The former corresponds to the metastable state of the process,
		defined as a state exhibiting maximal stability,
		while the point $n = 0$ represents the saddle point of the process.
	Hence, the exact exponent of the potential barrier is given by 
	\[
		\lim_{N\to\infty}\frac{1}{N}\log(v_{\lfloor\frac{\lambda}{1+\lambda}N\rfloor} / v_0)
		= 2\log(1+\lambda) - \log(1+2\lambda).
	\]
	This suggests the large deviation principle
	\[
		\lim_{N\to\infty}\frac{1}{N}\log\mathbb{E}\tau
		= 2\log(1+\lambda) - \log(1+2\lambda),
	\]
	which was previously observed by Wang \cite{wang}.
\end{remark}

To outline the proof of the asymptotic behavior of $v_n$,
	we first address the high infection regime.
Our primary objective is to demonstrate that
	the deviation between $v_n$ and $v_n^{\mathrm{high}}$ remains controlled
	as $n$ decreases from $N$.
This result is established inductively by leveraging the recurrence relation
	outlined in \eqref{eq:recurrence vn-4}.
Turning to the low infection regime,
	we initiate our analysis by deriving an explicit representation for $C_{N,\lambda}$.
Subsequently, we partition the low infection regime into three distinct segments.
The first segment considers values of $n$ that are
	neither close to $0$ nor near the transitional point $\frac{\lambda}{1+2\lambda}N$.
For these, we apply Laplace's method in a more refined manner to 
	the integral representation for $v_n$ in \eqref{eq:recurrence solution integral}.
The second segment addresses values of $n$ close to $0$,
	where our approach parallels that employed in the high infection regime,
	albeit initiated from the outcomes obtained previously by Laplace's method.
The final segment deals with values of $n$ near the transitional point $\frac{\lambda}{1+2\lambda}N$.
Here, we once again utilize the integral expression in \eqref{eq:recurrence solution integral}
	to derive suitable asymptotic bounds for $v_n$.

\begin{proof}[Proof of Theorem \ref{thm:stationary asymptotic high}]
	Define $x_n = v_n / v_n^{\mathrm{high}}$ and $y_n = x_n/x_{n+1}$.
	Then we have $x_N = x_{N-1} = 1$ and $y_{N-1} = 1$.
	Furthermore, the sequence $(y_n)_{0\le n\le N-1}$ satisfies a recurrence relation
	\begin{equation}\label{eq:yn recurrence high}
	\begin{split}
		&\quad (1+\lambda)(n+1)
			\Bigl(1 + \frac{1}{(1+2\lambda)(n+2)-\lambda N}\Bigr)
			\Bigl(1 + \frac{1}{(1+2\lambda)(n+1)-\lambda N}\Bigr)(y_n-1) \\
		&= \frac{2(1+\lambda)((1+\lambda)(n+1)-\lambda N)}{((1+2\lambda)(n+2)-\lambda N)((1+2\lambda)(n+1)-\lambda N)}
		+ \lambda (N-n-1) \frac{y_{n+1} - 1}{y_{n+1}},
	\end{split}
	\end{equation}
	which can be obtained directly from \eqref{eq:recurrence vn-4}.
	
	We now prove the theorem in two steps.

	\smallskip

	\noindent\textbf{Step 1.}
	$n \in [\frac{\lambda}{1+2\lambda}N+\varepsilon N, N]$, where $\varepsilon > 0$.
	
	\nopagebreak
	By relation \eqref{eq:yn recurrence high}, it holds that 
	\begin{equation}\label{eq:high asymptotic proof main step}
	\begin{split}
		\abs{y_n-1}
		\le{}& \frac{2\abs{(1+\lambda)(n+1)-\lambda N}}{(n+1)((1+2\lambda)(n+2)-\lambda N)((1+2\lambda)(n+1)-\lambda N)} \\
			& + \frac{\lambda (N-n-1)}{(1+\lambda)(n+1)} \frac{\abs{y_{n+1} - 1}}{y_{n+1}} \\
		\le{}& \frac{A}{N^2}
		+ (1 - r) \frac{|y_{n+1} - 1|}{y_{n+1}}
	\end{split}
	\end{equation}
	for constants $A>0$ and $0 < r < 1$ that depend only on $\lambda$ and $\varepsilon$.
	Choose a constant $B>0$, depending only on $\lambda$ and $\varepsilon$, 
		so that the inequality 
	\[
		\frac{A}{N^2} + (1-r)\frac{B/N^2}{1 - B/N^2} \le \frac{B}{N^2}
	\]
	holds for all sufficiently large $N$. For instance, $B = \frac{2A}{r}$ works. Since
	\[
		\abs{y_{N-1}-1} = 0 \le \frac{B}{N^2},
	\]
	we inductively deduce that $\abs{y_n - 1} \le \frac{B}{N^2}$ for all $n\in [\frac{\lambda}{1+2\lambda}N+\varepsilon N, N]$. Thus, we obtain
	\[
		\abs{\log x_n}
		\le \sum_{k=n}^{N-1} \abs{\log y_k}
		\le N \Bigl|\log\Bigl(1 - \frac{B}{N^2}\Bigr)\Bigr|
		= O(N^{-1}),
	\]
	implying that $\abs{x_n - 1} = O(N^{-1})$ for all $n\in [\frac{\lambda}{1+2\lambda}N+\varepsilon N, N]$.

	\smallskip
	\noindent\textbf{Step 2.} 
	$n \in [\frac{\lambda}{1+2\lambda}N+\delta, N]$, 
		where $\delta = N^{\frac{1}{2}+a}$, $0 < a < \frac{1}{2}$.
	
	\nopagebreak
	We proceed analogously to the previous step.
	Write $n = \frac{\lambda}{1+2\lambda}N + d_n$.
	We claim that
	\begin{equation*}
		\abs{y_n - 1} = O(Nd_n^{-3}).
	\end{equation*}
	Similarly to \eqref{eq:high asymptotic proof main step},
		we have
	\begin{equation*}
	\begin{split}
		\abs{y_n-1}
		\le{}& \frac{2\abs{(1+\lambda)(n+1)-\lambda N}}{(n+1)((1+2\lambda)(n+2)-\lambda N)((1+2\lambda)(n+1)-\lambda N)} \\
			& + \frac{\lambda (N-n-1)}{(1+\lambda)(n+1)} \frac{\abs{y_{n+1} - 1}}{y_{n+1}} \\
		\le{}& \frac{A}{d_n^2}
		+ \Bigl(1 - r \frac{d_n}{N}\Bigr) \frac{\abs{y_{n+1} - 1}}{y_{n+1}}
	\end{split}
	\end{equation*}
	for constants $A>0$ and $0 < r < 1$ that depend only on $\lambda$ and $\varepsilon$.
	Put $B = \frac{2A}{r}$, then it holds that
	\[
		\frac{A}{d_n^2}
			+ \Bigl(1 - r \frac{d_n}{N}\Bigr) \frac{BN(d+1)^{-3}}{1 - BN(d+1)^{-3}}
		\le \frac{BN}{d_n^3}
	\]
	for all sufficiently large $N$.
	Hence, we inductively deduce that $|y_n - 1| \le BNd_n^{-3}$
		for all $n \in [\frac{\lambda}{1+2\lambda}N+\delta, N]$.
	Thus, we obtain
	\begin{align*}
		\abs{\log x_n}
		&\le \sum_{k=n}^{N-1} \abs{\log y_k}
		\le \sum_{k=n}^{N-1}\Bigl|\log\Bigl(1 - B\frac{N}{(k - \frac{\lambda}{1+2\lambda}N)^3}\Bigr)\Bigr|
		\lesssim \frac{1}{N} \int_{\frac{\delta}{N}}^{\frac{1+\lambda}{1+2\lambda}}\frac{dt}{t^3}
		= O(N^{-2a}),
	\end{align*}
	and the proof is complete.
\end{proof}

\begin{lemma}
	\label{lem:integration Taylor}
	Let $-1 < a < 0$ and $0 < p < 1$ be given. Then, for each $m \in \mathbb{Z}_{> 0}$, we have 
	\begin{equation*}
		\int_0^1 (1-pt)^N (1-t)^a \,dt
		= \sum_{k=0}^{m-1} (-1)^k\binom{a}{k} p^{-k-1} B(k+1,N+1) + O(N^{-m})
	\end{equation*}
	as $N\to\infty$,
	where $B(a,b)$ denotes the beta function.
\end{lemma}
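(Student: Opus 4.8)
The plan is to expand the factor $(1-t)^a$ into its Taylor series about $t=0$ and integrate term by term, controlling the tail. Write $(1-t)^a = \sum_{k=0}^{m-1} (-1)^k \binom{a}{k} t^k + R_m(t)$, where by Taylor's theorem with integral (or Lagrange) remainder the error satisfies $|R_m(t)| \le C_{a,m}\, t^m (1-t)^{a-m}$ uniformly on $[0,1)$ — here one must be slightly careful since $a-m < -1$ makes $(1-t)^{a-m}$ non-integrable near $t=1$, so it is cleaner to split the integral at, say, $t = 1/2$. On $[0,1/2]$ the remainder is bounded by $C\, t^m$ and the factor $(1-pt)^N \le 1$, contributing $O(1)$ times $\int_0^{1/2} t^m\,dt = O(1)$, which is not yet small enough; the smallness must instead come from the $(1-pt)^N$ factor being exponentially small away from $t=0$ and the main mass of the integral concentrating in a window of width $O(1/N)$. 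So the right bookkeeping is: each term $\int_0^1 (1-pt)^N t^k\,dt$ is itself of order $N^{-k-1}$ (indeed I will identify it exactly), and the remainder term, integrated against $(1-pt)^N$, is $O(N^{-m})$ because $t^m$ against $(1-pt)^N$ gives a Beta-type integral of that order.

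First I would establish the exact identity
\begin{equation*}
	\int_0^1 (1-pt)^N t^k (1-t)^0\,dt
	\quad\text{versus}\quad
	p^{-k-1} B(k+1, N+1),
\end{equation*}
more precisely: by the substitution $u = pt$ one gets $\int_0^1 (1-pt)^N t^k\,dt = p^{-k-1}\int_0^p (1-u)^N u^k\,du$, which equals $p^{-k-1}\big(B(k+1,N+1) - \int_p^1 (1-u)^N u^k\,du\big)$. The defect $\int_p^1 (1-u)^N u^k\,du$ is bounded by $(1-p)^N \le (1-p)^N$, hence exponentially small in $N$ since $0<p<1$; this is negligible compared to any power $N^{-m}$. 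Therefore $\int_0^1 (1-pt)^N t^k\,dt = p^{-k-1} B(k+1,N+1) + O((1-p)^N)$, and summing the $m$ main terms reproduces exactly the asserted sum up to an exponentially small error.

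Next I would handle the remainder $\int_0^1 (1-pt)^N R_m(t)\,dt$. Split at $t_0 = 1/2$. On $[0,1/2]$, using $|R_m(t)| \le C t^m$ and the bound just proved, $\int_0^{1/2}(1-pt)^N t^m\,dt \le \int_0^1 (1-pt)^N t^m\,dt = O(B(m+1,N+1)) = O(N^{-m-1}) = O(N^{-m})$. On $[1/2,1]$, $(1-pt)^N \le (1-p/2)^N$ is exponentially small, and $\int_{1/2}^1 |R_m(t)|\,dt$ is a fixed finite constant (the remainder, being $(1-t)^a$ minus a polynomial, is integrable near $1$ because $a > -1$); so this piece is $O((1-p/2)^N)$, again negligible. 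Combining the two estimates gives the remainder contribution as $O(N^{-m})$, and together with the previous paragraph the lemma follows.

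The main obstacle is purely a matter of careful bookkeeping at the endpoint $t=1$: the naive pointwise Taylor remainder bound $t^m(1-t)^{a-m}$ is not integrable there when $m \ge 1$, so one cannot bound the remainder integral by a single global estimate. Splitting the domain at a fixed point bounded away from both $0$ and $1$ resolves this cleanly — near $t=1$ one exploits the exponential decay of $(1-pt)^N$ and the bare integrability of $(1-t)^a$, while near $t=0$ one uses the genuine Taylor expansion. No single step is deep; the only thing to get right is that the claimed error $O(N^{-m})$ matches the order $N^{-m-1}$ of the first omitted term (so the estimate is in fact slightly lossy, which is fine), and that all endpoint corrections are exponentially small and hence absorbed.
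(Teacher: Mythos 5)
Your proposal is correct and follows essentially the same strategy as the paper's proof: Taylor-expand $(1-t)^a$ about $t=0$, identify each $\int (1-pt)^N t^k\,dt$ with $p^{-k-1}B(k+1,N+1)$ up to an exponentially small tail, and control the Taylor remainder by splitting the domain at $1/2$ so that near $0$ one uses $|R_m(t)|\lesssim t^m$ (giving $O(B(m+1,N+1)) = O(N^{-m-1})$) while near $1$ one uses exponential decay of $(1-pt)^N$ together with bare integrability of $(1-t)^a$. The only cosmetic difference is that the paper first truncates the whole integral to $[0,1/2]$ and then cites NIST (8.18.1) to compare the incomplete beta $B(p/2;k+1,N+1)$ with $B(k+1,N+1)$, whereas you keep the integral over $[0,1]$ and bound the defect $\int_p^1(1-u)^Nu^k\,du \le (1-p)^N$ directly — this is an equivalent (and arguably more self-contained) way to dispense with the same exponentially small correction.
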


\begin{proof}
	Denote the integral on the left-hand side by $I_N$.
	The term $(1-pt)^N$ decays exponentially as $N \to \infty$ 
		for $t\in [0,1]$ bounded away from $0$,
		while the term $(1-t)^a$ has a unique algebraic singularity at $1$.
	Hence, we may replace the integration interval with $[0,1/2]$ without affecting the asymptotic behavior,
		so that the range does not contain $1$.

	Now, by Taylor's theorem, we have 
	\begin{align*}
		I_N
		&= \int_0^{1/2} (1-pt)^N (1-t)^a \,dt + O(N^{-m}) \\
		&= \sum_{k=0}^{m-1} (-1)^k\binom{a}{k}\int_0^{1/2} (1-pt)^N t^k \,dt
			+ \int_0^{1/2} (1-pt)^N O(t^{m}) \,dt
			+ O(N^{-m}).
	\end{align*}
	Note that
	\begin{align*}
		\int_0^{1/2} t^k (1-pt)^N \,dt
		&= p^{-k-1} B(p/2; k+1,N+1),
	\end{align*}
	where $B(x;a,b)$ denotes the incomplete beta function.
	By the asymptotic expansion \cite[Equation (8.18.1)]{nist} of incomplete beta functions,
		the error between $B(p/2;k+1,N+1)$ and $B(k+1,N+1)$ is exponentially small in $N$.
	Thus,
	\begin{align*}
		I_N
		&= \sum_{k=0}^{m-1} (-1)^k\binom{a}{k}\int_0^{1/2} (1-pt)^N t^k \,dt
			+ \int_0^{1/2} (1-pt)^N O(t^{m}) \,dt
			+ O(N^{-m}) \\
		&= \sum_{k=0}^{m-1} (-1)^k\binom{a}{k} p^{-k-1} B(k+1,N+1)
			+ O(N^{-m})
	\end{align*}
	since $B(m,N+1) = O(N^{-m})$.
\end{proof}

\begin{proposition}[Initial value condition]\label{prop:vn initial value}
	As $N\to\infty$, we have 
	\begin{equation*}
		\frac{v_1}{v_N} = C_{N,\lambda} (1+\lambda) (1+O(N^{-1})),
	\end{equation*}
	where $C_{N,\lambda}$ is as defined in Theorem \ref{thm:stationary asymptotic low}.
\end{proposition}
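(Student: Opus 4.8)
The plan is to reduce everything to the explicit solution of Proposition~\ref{prop:recurrence solution} and to extract the two ingredients we need. First I would note that, as observed right after Proposition~\ref{prop:recurrence}, the sequence $(v_n)_{1\le n\le N}$ is a constant multiple of $(\pi_n)_{1\le n\le N}$, and $v_N=\mu(1,N)=1$ by normalization; hence $v_1/v_N=\pi_1/\pi_N$, a scale-invariant quantity. Consequently only the ratio $b/c$ of the two constants in Proposition~\ref{prop:recurrence solution} will enter, and the normalization identity \eqref{eq:recurrence solution constant relation} will not be needed for this statement.

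The numerator is exact and immediate: $\pi_1=\lambda N\pi_0$ together with $c=-\pi_0$ gives $\pi_1=-\lambda N c$. For the denominator I would evaluate \eqref{eq:recurrence solution} at $n=N$; the first summand drops out since $\binom{N-1}{N}=0$ and the remaining two each collapse to a single term, producing the exact identity
\begin{equation*}
	\pi_N=-\frac{b}{1+2\lambda}\Bigl(\tfrac{\lambda(1+\lambda)}{1+2\lambda}\Bigr)^{N-1}B\Bigl(\tfrac1{1+2\lambda},N\Bigr)-\frac{c-b/\lambda}{1+2\lambda}\Bigl(\tfrac{\lambda(1+\lambda)}{1+2\lambda}\Bigr)^{N}B\Bigl(\tfrac1{1+2\lambda},N+1\Bigr),
\end{equation*}
where $B(\tfrac1{1+2\lambda},N+1)=B(\tfrac1{1+2\lambda},N)(1+O(N^{-1}))$.

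The core step is to pin down $b/c$. For this I would feed $n=0$ into the integral representation \eqref{eq:recurrence solution at 0 integral}, write its right-hand side as $-\tfrac{b}{\lambda(1+2\lambda)}(J_1-J_2)-\tfrac{c}{1+2\lambda}J_2$ with $J_1,J_2$ the two integrals appearing there, and expand $J_1,J_2$ by Lemma~\ref{lem:integration Taylor} (with $a=-\tfrac{2\lambda}{1+2\lambda}$ and $p=\tfrac{\lambda}{1+2\lambda}$). The leading terms of $J_1$ and $J_2$ coincide, so $J_1-J_2$ is of order $N^{-2}$ while $J_2$ is of order $N^{-1}$; thus the $b$-part of $\pi_0$ is of order $N^{-2}$ and the $c$-part of order $N^{-1}$, and matching with the defining identity $\pi_0=-c$ forces
\begin{equation*}
	b=\lambda^2N^2c\,(1+O(N^{-1})).
\end{equation*}
Substituting this back, $c-b/\lambda=-\lambda N^2c(1+O(N^{-1}))$, and the two summands of $\pi_N$ — each comparable to $\lambda^2N^2|c|\,(\tfrac{\lambda(1+\lambda)}{1+2\lambda})^{N}B(\tfrac1{1+2\lambda},N)$ — combine with residual coefficient $-\lambda+\tfrac{\lambda(1+\lambda)}{1+2\lambda}=-\tfrac{\lambda^2}{1+2\lambda}$, giving $\pi_N=-\tfrac{\lambda^3N^2c}{(1+2\lambda)^2}(\tfrac{\lambda(1+\lambda)}{1+2\lambda})^{N-1}B(\tfrac1{1+2\lambda},N)(1+O(N^{-1}))$. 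Dividing $\pi_1=-\lambda N c$ by this, the common factor $c$ cancels, and a short simplification against the definition of $C_{N,\lambda}$ in Theorem~\ref{thm:stationary asymptotic low} yields $v_1/v_N=\pi_1/\pi_N=C_{N,\lambda}(1+\lambda)(1+O(N^{-1}))$.

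The hard part will be the relation $b=\lambda^2N^2c(1+O(N^{-1}))$: since the one-term Laplace estimates of $J_1$ and $J_2$ cancel in $J_1-J_2$, one must retain the second-order term of Lemma~\ref{lem:integration Taylor} and check that every discarded error is genuinely $O(N^{-1})$ relative to the surviving $c$-term. The later cancellation in the $\pi_N$-formula is milder — the residual combination is only a bounded factor smaller than either summand, so it does not degrade the error — but it must still be carried out as an exact identity rather than an inequality, since that residual coefficient is precisely the prefactor we are computing.
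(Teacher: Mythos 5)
Your proposal is correct and follows essentially the same path as the paper's proof: both extract $b/\pi_0$ (equivalently $b/c$, since $c=-\pi_0$) from the integral representation \eqref{eq:recurrence solution at 0 integral} via Lemma~\ref{lem:integration Taylor}, then evaluate \eqref{eq:recurrence solution} at $n=N$ and simplify against the definition of $C_{N,\lambda}$. The only cosmetic difference is that the paper records the finer two-term expansion \eqref{eq:b over pi0} of $b/\pi_0$ (needed later in Step~1 of the proof of Theorem~\ref{thm:stationary asymptotic low}), whereas you correctly observe that for this proposition alone the leading-order relation $b=\lambda^2N^2c\,(1+O(N^{-1}))$ suffices.
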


\begin{proof}
	We claim that
	\begin{equation}\label{eq:b over pi0}
		\frac{b}{\pi_0}
		= -\lambda^2 N^2 + \lambda(1+3\lambda) N
		+ O(1),
	\end{equation}
	where $b$ and $\pi_0$ are as defined in Proposition \ref{prop:recurrence solution}.

	Define 
	\[
		I_N
		= \int_0^1
			\Bigl(1 - \frac{\lambda}{1+2\lambda}t\Bigr)^{N}
			(1-t)^{-\frac{2\lambda}{1+2\lambda}} \,dt.
	\]
	Then from the integral expression \eqref{eq:recurrence solution at 0 integral},
		we have the relation
	\[
		\frac{b}{\pi_0}
		= \frac{\lambda(1+2\lambda - I_N)}{I_N - I_{N-1}}.
	\]
	We deduce the asymptotic expansion \eqref{eq:b over pi0}
		by applying Lemma \ref{lem:integration Taylor} and subsequently performing a long division.

	By putting $n=N$ in the solution \eqref{eq:recurrence solution}, we have
	\begin{align*}
		\frac{\pi_N}{\pi_0}
		=&-\frac{b}{(1+2\lambda)\pi_0}
			B\Bigl(\tfrac{1}{1+2\lambda},N\Bigr)
			\Bigl(\frac{\lambda}{1+2\lambda}\Bigr)^{N-1} (1+\lambda)^{N-1} \\
		&- \frac{c-\frac{b}{\lambda}}{(1+2\lambda)\pi_0}
			B\Bigl(\tfrac{1}{1+2\lambda},N+1\Bigr)
			\Bigl(\frac{\lambda}{1+2\lambda}\Bigr)^{N} (1+\lambda)^{N} \\
		= {}& \frac{1}{\lambda(1+2\lambda)}
			B\Bigl(\tfrac{1}{1+2\lambda},N+1\Bigr)
			\Bigl(\frac{\lambda(1+\lambda)}{1+2\lambda}\Bigr)^N
			\Bigl[\lambda + \Bigl(1
				- \frac{N+\frac{1}{1+2\lambda}}{N}\frac{1+2\lambda}{1+\lambda}\Bigr)
				\frac{b}{\pi_0}\Bigr] \\
		={}& \frac{\lambda}{1+\lambda} N \frac{1}{C_{N,\lambda}} (1+O(N^{-1})).
	\end{align*}
	Therefore, we conclude that
	\[
		\frac{v_1}{v_N}
		= \frac{\pi_1}{\pi_N}
		= \frac{\lambda N \pi_0}{\pi_N}
		= C_{N,\lambda} (1+\lambda)(1+O(N^{-1})),
	\]
	and this is precisely the assertion of the proposition.
\end{proof}

\begin{proof}[Proof of Theorem \ref{thm:stationary asymptotic low}]
	The proof is divided into three steps. 
	
	\smallskip
	
	\noindent\textbf{Step 1.} 
	$n\in [\varepsilon N, \frac{\lambda}{1+2\lambda}N-\delta]$, 
		where $\varepsilon > 0$, $0 < a < \frac{1}{2}$, 
			and $\delta = \varepsilon N$ or $\delta = N^{\frac{1}{2}+a}$.

	\nopagebreak
	We abbreviate $n/N = \beta$ and $\min\{\beta,\frac{\lambda}{1+2\lambda}-\beta\} = \gamma$.
	Recall from \eqref{eq:recurrence solution integral}:
	\begin{align*}
		\pi_n
		=& -\frac{b}{\lambda^2}\binom{N-1}{n} (1+\lambda)^n
			\int_0^{\frac{\lambda}{1+2\lambda}}u^n(1-u)^{N-n-1}
				\Bigl(1 - \frac{1+2\lambda}{\lambda}u\Bigr)^{-\frac{2\lambda}{1+2\lambda}} \,du \\
		& -\frac{b}{\lambda}\binom{N-1}{n-1} (1+\lambda)^{n-1}
		\int_0^{\frac{\lambda}{1+2\lambda}}u^{n-1}(1-u)^{N-n-1}
			\Bigl(1 - \frac{1+2\lambda}{\lambda}u\Bigr)^{-\frac{2\lambda}{1+2\lambda}} \,du \\
		& -\frac{c - \frac{b}{\lambda}}{\lambda}\binom{N}{n} (1+\lambda)^n
			\int_0^{\frac{\lambda}{1+2\lambda}}u^n(1-u)^{N-n}
				\Bigl(1 - \frac{1+2\lambda}{\lambda}u\Bigr)^{-\frac{2\lambda}{1+2\lambda}} \,du \\
		=& \biggl[
				\frac{-b}{\lambda}\beta^n(1-\beta)^{N-n}\binom{N}{n}
			\biggr]
			(1+\lambda)^{n-1}
			\Bigl(
				1 - \frac{1+2\lambda}{\lambda}\beta
			\Bigr)^{-\frac{2\lambda}{1+2\lambda}}
			\int_0^{\frac{\lambda}{1+2\lambda}} e^{-N P_n(t)}Q_n(t) \,dt,
	\end{align*}
	where
	\begin{align*}
		P_n(t)
		&= - \beta \log \frac{t}{\beta} - (1 - \beta)\log\frac{1-t}{1-\beta}, \\
		Q_n(t)
		&= \biggl(
				\frac{
					\frac{\lambda}{1+2\lambda} - t
				}{
					\frac{\lambda}{1+2\lambda} - \beta
				}
			\biggr)^{-\frac{2\lambda}{1+2\lambda}}
			\Bigl[
				\frac{1+\lambda}{\lambda}\frac{1-\beta}{1-t}
				+ \frac{\beta}{t}
				- \frac{1+\lambda}{\lambda}
			\Bigr].
	\end{align*}
	For simplicity of computation, we have omitted the term with $c$ from $Q_n(t)$.
	This omission can be justified by separately performing a similar computation as below for the term containing $c$, while noting from \eqref{eq:b over pi0} that $c/b = O(N^{-2})$.

	By Stirling's series \cite[Equation (5.11.3)]{nist},
	we have 
	\[
		\binom{N}{n} = \frac{1}{\sqrt{2\pi N} \beta^{n+1/2} (1-\beta)^{N-n+1/2}}
		\Bigl[
			1 + \frac{1}{12}\Bigl(1 - \frac{1}{\beta} - \frac{1}{1-\beta}\Bigr)\frac{1}{N} + O(N^{-2})
		\Bigr].
	\]
	Hence,
	\begin{equation*}
	\begin{split}
		\pi_n
		={}& \frac{-b}{\lambda}\frac{1}{\sqrt{2\pi N \beta (1-\beta)}}
			\Bigl[
				1 + \frac{1}{12}\Bigl(1 - \frac{1}{\beta} - \frac{1}{1-\beta}\Bigr)\frac{1}{N} + O(N^{-2})
			\Bigr] \\
		& \times
			(1+\lambda)^{n-1}
			\Bigl(
				1 - \frac{1+2\lambda}{\lambda}\beta
			\Bigr)^{-\frac{2\lambda}{1+2\lambda}}
			\int_0^{\frac{\lambda}{1+2\lambda}} e^{-N P_n(t)}Q_n(t) \,dt.
	\end{split}
	\end{equation*}

	The function $P_n(t)$ attains a unique minimum value $0$ at $t = \beta$ on the interval $(0,1)$, and we have
	\[
	\begin{split}
		P_n'(t)
			&= - \frac{\beta}{t} + \frac{1-\beta}{1-t}, \\
		P_n^{(3)}(t)
			&= -2\Bigl[ \frac{\beta}{t^3} - \frac{1-\beta}{(1-t)^3} \Bigr],
	\end{split}
	\qquad \qquad
	\begin{split}
		P_n''(t)
			&= \frac{\beta}{t^2} + \frac{1-\beta}{(1-t)^2}, \\
			P_n^{(4)}(t)
			&= 6\Bigl[ \frac{\beta}{t^4} + \frac{1-\beta}{(1-t)^4}\Bigr],
	\end{split}
	\]
	and
	\[
		Q_n'(t)
			= Q_n(t) A_n(t), \qquad \qquad
		Q_n''(t)
			= Q_n(t)[ A_n(t)^2 + A_n'(t)],
	\]
	where
	\[
		A_n(t)
		= \frac{2\lambda}{1+2\lambda}\frac{1}{\frac{\lambda}{1+2\lambda} - t}
			+ \frac{
				\frac{1+\lambda}{\lambda}\frac{1-\beta}{(1-t)^2}
				- \frac{\beta}{t^2}
			}{
				\frac{1+\lambda}{\lambda}\frac{1-\beta}{1-t}
				+ \frac{\beta}{t}
				- \frac{1+\lambda}{\lambda}
			}.
	\]
	We have assumed that $\gamma = \Omega(N^{-\frac{1}{2}+a})$, 
		so outside the interval $(\beta - \gamma/3,\beta+\gamma/3)$, 
		the exponential term in the integral decays rapidly as $N \to \infty$, 
		while $Q_n(t)$ has only algebraic singularities of order less than $1$.
	Hence, we may replace the integration interval with $(\beta-\gamma_1,\beta+\gamma_2)$ 
		without affecting the asymptotic behavior, where:
	\begin{itemize}%[topsep=5pt]
		\item $\gamma/3 < \gamma_i < \gamma/2$,
		\item $P_n(\beta - \gamma_1) = P_n(\beta + \gamma_2) = \kappa$.
	\end{itemize}
	Note that $P_n(t)$ and $Q_n(t)$ are smooth on the interval $(\beta-\gamma_1,\beta+\gamma_2)$.
	
	Now we apply Laplace's method to approximate the integrals.
	The theoretical background can be found in various textbooks, for instance, \cite[Section 3.7--9]{olver-74}.
	Define $p_{n,s}(t) = \frac{1}{(s+2)!}P_n^{(s+2)}(t)$ and $q_{n,s}(t) = \frac{1}{s!}Q_n^{(s)}(t)$ so that we have the Taylor expansions
	\begin{align*}
		P_n(t)
		&= p_0(t-\beta)^2 + p_1(t-\beta)^3 + p_2(t-\beta)^4 + \dots, \\
		Q_n(t)
		&= q_0 + q_1(t-\beta) + q_2(t-\beta)^2 + \dots,
	\end{align*}
	where $p_{n,s}(\beta) = p_s$ and $q_{n,s}(\beta) = q_s$.
	Substituting $v = P_n(t)$ for $t \ge \beta$ sufficiently close to $\beta$, we obtain 
	\[
		\int_\beta^{\beta+\gamma_2} e^{-NP_n(t)}Q_n(t) \,dt
		= \int_0^\kappa e^{-Nv} f(v) \,dv,
	\]
	where
	\[
		f(v)
		= \frac{Q_n(t)}{P_n'(t)}
		= a_{n,0}(\beta) v^{-1/2} + a_{n,1}(\beta) + a_{n,2}(\beta) v^{1/2} + a_{n,3}(\beta) v + \dots.
	\]
	Here, each coefficient $a_{n,s}$ is a polynomial expression in the terms $p_{n,i}$ and $q_{n,i}$, divided by an appropriate power of $p_{n,0}^{1/2}$. These coefficients can be explicitly computed through series reversion. The first few terms are as follows.
	\begin{equation}\label{eq:def of an}
		\begin{split}
			a_{n,0}
				&= \frac{q_{n,0}}{2p_{n,0}^{1/2}}, \qquad
			a_{n,1}
				= \frac{1}{2p_{n,0}^2}[p_{n,0}q_{n,1} - p_{n,1}q_{n,0}], \\
			a_{n,2}
				&= \frac{1}{16p_{n,0}^{7/2}}
				[
					8 p_{n,0}^2 q_{n,2}
					- 12 p_{n,0} p_{n,1} q_{n,1}
					+ 3(5p_{n,1}^2 - 4p_{n,0} p_{n,2}) q_{n,0}
				].
		\end{split}
	\end{equation}
	Similarly as before, we write $a_{n,s}(\beta) = a_s$ for brevity.
	We also have
	\[
		\int_{\beta-\gamma_1}^{\beta} e^{-NP_n(t)}Q_n(t) \,dt
		= \int_0^\kappa e^{-Nv} \widetilde{f}(v) \,dv,
	\]
	where
	\[
		\widetilde{f}(v) = a_0 v^{-1/2} - a_1 + a_2 v^{1/2} - a_3 v + \dots.
	\]
	Thus, the integral to estimate becomes
	\begin{equation*}
		\int_{\beta-\gamma_1}^{\beta+\gamma_2} e^{-NP_n(t)}Q_n(t) \,dt
		= 2 \int_0^\kappa e^{-Nv} F_n(v) \,dv,
	\end{equation*}
	where
	\begin{equation*}
		F_n(v) = a_0 v^{-1/2} + a_2 v^{1/2} + a_4 v^{3/2} + \dots.
	\end{equation*}
	
	For a fixed integer $s\ge 1$, the above integral can be expressed as
	\begin{equation}\label{eq:laplace expansion}
	\begin{split}
		&\quad \int_0^\kappa e^{-Nv} F_n(v) \,dv \\
		&= \int_0^\infty e^{-Nv} \sum_{i=0}^{s-1} a_{2i} v^{i-1/2} \,dv
			- \int_\kappa^\infty e^{-Nv} \sum_{i=0}^{s-1} a_{2i} v^{i-1/2} \,dv
			+ \int_0^\kappa e^{-Nv} R_{n,s}(v) \,dv \\
		&= \sum_{i=0}^{s-1} \Gamma\Bigl(i+\frac{1}{2}\Bigr)\frac{a_{2i}}{N^{i+1/2}}
			- \sum_{i=0}^{s-1} \Gamma\Bigl(i+\frac{1}{2}, N\kappa\Bigr)\frac{a_{2i}}{N^{i+1/2}}
			+ \int_0^\kappa e^{-Nv} R_{n,s}(v) \,dv,
	\end{split}
	\end{equation}
	where $\Gamma(a,x)$ is the incomplete gamma function and
	\begin{equation*}
		R_{n,s}(v)
		= F_n(v) - \sum_{i=0}^{s-1} a_{2i} v^{i-1/2}.
	\end{equation*}
	Note that the incomplete Gamma function can be bounded by
	\[
		\Gamma(r,x) \le \frac{e^{-x} x^r}{x - \max\{r-1,0\}}
		\qquad (x > \max\{r-1,0\}).
	\]
	See \cite[Equation (8.10.1)]{nist} and \cite[Chapter 3, Equation (2.14)]{olver-74}.
	Since we have $N\kappa = \Omega(N^{a})$ and $a_{2i}$ grows algebraically as $N \to \infty$,
		we may neglect the second term in the last line of \eqref{eq:laplace expansion}
		without affecting the asymptotic behavior.
	
	Now, we will show that the third term is also sufficiently small for some $s$.
	By Taylor's theorem, there exists some $t_*$ lying between $\beta$ and $t$ such that
	\begin{equation*}
		R_{n,s}(v) = a_{n,2s}(t_*) v^{s-1/2}.
	\end{equation*}

	First, we consider those values of $n$ satisfying $\gamma \ge \varepsilon > 0$.
	For $t\in [\beta - \gamma/2, \beta + \gamma/2]$, each of the terms $p_{n,i}$ and $q_{n,i}$ is uniformly bounded in $n$, and $p_{n,0}$ remains uniformly bounded away from $0$ in $n$. Consequently, each coefficient $a_{n,s}$ is also uniformly bounded in $n$, so we have 
	\[
		\biggl|\int_0^\kappa e^{-Nv} R_{n,s}(v) \,dv\biggr|
		\le \int_0^\kappa e^{-Nv} |a_{n,2s}(t_*)| v^{s-1/2} \,dv
		= O\Bigl(\frac{1}{N^{s+1/2}}\Bigr).
	\]
	This yields that
	\begin{align*}
		\int_0^\kappa e^{-Nv} F_n(v) \,dv
		&= \sum_{i=0}^{s-1} \Gamma\Bigl(i+\frac{1}{2}\Bigr)\frac{a_{2i}}{N^{i+1/2}}
			+ O\Bigl(\frac{1}{N^{s+1/2}}\Bigr),
	\end{align*}
	so
	\begin{align*}
		\pi_n
		={}& \frac{-b}{\lambda}\frac{1}{\sqrt{2\pi N \beta (1-\beta)}}
			\Bigl[
				1 + \frac{1}{12}\Bigl(1 - \frac{1}{\beta} - \frac{1}{1-\beta}\Bigr)\frac{1}{N} + O(N^{-2})
			\Bigr] \\
		& \times
			(1+\lambda)^{n-1}
			\Bigl(
				1 - \frac{1+2\lambda}{\lambda}\beta
			\Bigr)^{-\frac{2\lambda}{1+2\lambda}}
			\cdot 2 \int_0^\kappa e^{-Nv} F_n(v) \,dv \\
		={}& \frac{-b}{\lambda}\frac{1}{\sqrt{2\pi N \beta (1-\beta)}}
			\Bigl[
				1 + \frac{1}{12}\Bigl(1 - \frac{1}{\beta} - \frac{1}{1-\beta}\Bigr)\frac{1}{N} + O(N^{-2})
			\Bigr] \\
		& \times
			(1+\lambda)^{n-1}
			\Bigl(
				1 - \frac{1+2\lambda}{\lambda}\beta
			\Bigr)^{-\frac{2\lambda}{1+2\lambda}}
			\biggl[
				\sum_{i=0}^{s-1} 2\Gamma\Bigl(i+\frac{1}{2}\Bigr)\frac{a_{2i}}{N^{i+1/2}}
				+ O\Bigl(\frac{1}{N^{s+1/2}}\Bigr)
			\biggr].
	\end{align*}
	Putting $s = 2$ gives
	\begin{align*}
		\pi_n
		={}& \frac{-b}{\lambda}\frac{1}{\sqrt{2\pi N \beta (1-\beta)}}
			\Bigl[
				1 + \frac{1}{12}\Bigl(1 - \frac{1}{\beta} - \frac{1}{1-\beta}\Bigr)\frac{1}{N} + O(N^{-2})
			\Bigr] \\
		& \times
			(1+\lambda)^{n-1}
			\Bigl(
				1 - \frac{1+2\lambda}{\lambda}\beta
			\Bigr)^{-\frac{2\lambda}{1+2\lambda}}
			\Bigl[
				2\sqrt{\pi}\frac{a_0}{N^{1/2}}
				+ \sqrt{\pi}\frac{a_2}{N^{3/2}}
				+ O\Bigl(\frac{1}{N^{5/2}}\Bigr)
			\Bigr].
	\end{align*}
	Since $p_0 = \frac{1}{2\beta(1-\beta)}$ and $q_0 = 1$, we have
	\[
		a_0
		= \frac{q_0}{2p_0^{1/2}}
		= \sqrt{\frac{\beta(1-\beta)}{2}}.
	\]
	Recall from \eqref{eq:b over pi0}:
	\[
		\frac{b}{\pi_1}
		= -\lambda N + (1 + 3\lambda) + O(N^{-1}),
	\]
	so we obtain
	\begin{equation}\label{eq:low n precise}
	\begin{split}
		\pi_n
		={}& \frac{-b}{\lambda N}
			\Bigl[
				1 + \frac{1}{12}\Bigl(1 - \frac{1}{\beta} - \frac{1}{1-\beta}\Bigr)\frac{1}{N} + O(N^{-2})
			\Bigr] \\
		& \times
			(1+\lambda)^{n-1}
			\Bigl(
				1 - \frac{1+2\lambda}{\lambda}\beta
			\Bigr)^{-\frac{2\lambda}{1+2\lambda}}
			\Bigl[
				1
				+ \frac{a_2}{\sqrt{2\beta(1-\beta)}}\frac{1}{N}
				+ O\Bigl(\frac{1}{N^2}\Bigr)
			\Bigr] \\
		={}& \pi_1
			(1+\lambda)^{n-1}
			\Bigl(
				1 - \frac{1+2\lambda}{\lambda}\beta
			\Bigr)^{-\frac{2\lambda}{1+2\lambda}} \\
		& \times
			\Bigl[
				1
				+ \Bigl[
					- \frac{1+3\lambda}{\lambda}
					+ \frac{1}{12}\Bigl(1 - \frac{1}{\beta} - \frac{1}{1-\beta}\Bigr)
					+ \frac{a_2}{\sqrt{2\beta(1-\beta)}}
				\Bigr]\frac{1}{N}
				+ O\Bigl(\frac{1}{N^2}\Bigr)
			\Bigr].
	\end{split}
	\end{equation}

	Next, we deal with the case when $\gamma \le \varepsilon$ and $\gamma = \Omega(N^{-\frac{1}{2}+a})$.
	For $t \in [\beta - \gamma/2, \beta + \gamma/2]$, each of the terms $p_{n,i}$ is uniformly bounded, and
	\begin{align*}
		q_{n,0}(t)
			&= O(1), \\
		q_{n,1}(t)
			&= O(\gamma^{-1})
			  = O(N^{\frac{1}{2}(1-2a)}), \\
		q_{n,2}(t)
			&= O(\gamma^{-2})
			 = O(N^{1-2a}),
	\end{align*}
	so
	\begin{align*}
		a_{n,2}(t)
		&= \frac{1}{16p_{n,0}^{7/2}}
			[
				8 p_{n,0}^2 q_{n,2}
				- 12 p_{n,0} p_{n,1} q_{n,1}
				+ 3(5p_{n,1}^2 - 4p_{n,0} p_{n,2}) q_{n,0}
			]
		= O(N^{1-2a}).
	\end{align*}
	Now we put $s = 1$ and proceed similarly as before. Since we have
	\[
		\biggl|\int_0^\kappa e^{-Nv} R_{n,1}(v) \,dv\biggr|
		\le \int_0^\kappa e^{-Nv} |a_{n,2}(t_*)| v^{1/2} \,dv
		= O(N^{- \frac{1}{2} - 2a}),
	\]
	the integral to estimate has an asymptotic expression
	\begin{align*}
		\int_0^\kappa e^{-Nv} F_n(v) \,dv
		&= \sqrt{\pi}\frac{a_0}{N^{1/2}}
			+ O(N^{- \frac{1}{2} - 2a}).
	\end{align*}
	Hence,
	\begin{align*}
		\pi_n
		={}& \frac{-b}{\lambda}\frac{1}{\sqrt{2\pi N \beta (1-\beta)}}
			\Bigl[
				1 + \frac{1}{12}\Bigl(1 - \frac{1}{\beta} - \frac{1}{1-\beta}\Bigr)\frac{1}{N} + O(N^{-2})
			\Bigr] \\
		& \times
			(1+\lambda)^{n-1}
			\Bigl(
				1 - \frac{1+2\lambda}{\lambda}\beta
			\Bigr)^{-\frac{2\lambda}{1+2\lambda}}
			\Bigl[
				2\sqrt{\pi}\frac{a_0}{N^{1/2}}
				+ O(N^{- \frac{1}{2} - 2a})
			\Bigr] \\
		={}& \pi_1
			(1+\lambda)^{n-1}
			\Bigl(
				1 - \frac{1+2\lambda}{\lambda}\beta
			\Bigr)^{-\frac{2\lambda}{1+2\lambda}}
			\Bigl[
				1
				+ O(N^{-2a})
			\Bigr].
	\end{align*}
	
	\smallskip

	\noindent\textbf{Step 2}.
	$n \in [0, \varepsilon N]$, where $0 < \varepsilon < \frac{\lambda}{1+2\lambda}$.

	\nopagebreak
	In this regime, we may replace $g_{N,\lambda}(n)$ with
	\[
		\widetilde{g}_{N,\lambda}(n)
		= \prod_{k=0}^{n-1}\Bigl[
			1 + \frac{2\lambda}{\lambda N - (1+2\lambda)(k+1) - 1}
		\Bigr],
	\]
	since
	\begin{equation}\label{eq:alternative gn proof}
	\begin{split}
		\widetilde{g}_{N,\lambda}(n)
		&= \exp\biggl[
				\sum_{k=0}^{n-1}\log\Bigl(
					1 + \frac{2\lambda}{\lambda N - (1+2\lambda)(k+1) - 1}
				\Bigr)
			\biggr] \\
		&= \exp\biggl[
				\sum_{k=0}^{n-1} \Bigl(
					\frac{2\lambda}{\lambda N - (1+2\lambda)(k+1) - 1} + O(N^{-2})
				\Bigr)
			\biggr] \\
		&= \Bigl(
				1 - \frac{1+2\lambda}{\lambda}\frac{n}{N}
			\Bigr)^{-\frac{2\lambda}{1+2\lambda}}
			(1 + O(nN^{-2})) \\
		&= g_{N,\lambda}(n) (1 + O(N^{-1}))
	\end{split}
	\end{equation}
	by Taylor's theorem.
	We also replace $v_n^{\mathrm{low}}$ with $\widetilde{v}_n^{\mathrm{low}}$ accordingly.

	Define $x_n = v_n/\widetilde{v}_n^{\mathrm{low}}$
		and $y_n = x_{n}/x_{n+1}$.
	Set $n_0 = \lfloor\varepsilon N\rfloor$.
	We will show that $x_{n_0} = 1 + O(N^{-1})$ and $y_{n_0} = 1 + O(N^{-2})$.
	Recall from \eqref{eq:low n precise} in the previous step that,
		for each integer $n$ sufficiently close to $n_0$, we have
	\begin{equation}\label{eq:low n precise revisited}
		\frac{v_n}{v_n^{\mathrm{low}}}
		= \frac{v_1}{C_{N,\lambda}} \times
			\Bigl[
				1
				+ \Bigl[
					- \frac{1+3\lambda}{\lambda}
					+ \frac{1}{12}\Bigl(1 - \frac{1}{\beta} - \frac{1}{1-\beta}\Bigr)
					+ \frac{a_{n,2}(\beta)}{\sqrt{2\beta(1-\beta)}}
				\Bigr]\frac{1}{N}
				+ O\Bigl(\frac{1}{N^2}\Bigr)
			\Bigr],
	\end{equation}
	where $\beta = n/N$ and $a_{n,2}$ is as defined in \eqref{eq:def of an}.
	For these values of $n$, we have
	\begin{align*}
		\frac{g_{N,\lambda}(n+1)}{g_{N,\lambda}(n)}
		&= \Bigl(
				1 - \frac{1+2\lambda}{\lambda N - (1+2\lambda)n}
			\Bigr)^{-\frac{2\lambda}{1+2\lambda}}
		= \frac{\widetilde{g}_{N,\lambda}(n+1)}{\widetilde{g}_{N,\lambda}(n)}
			(1 + O(N^{-2})),
	\end{align*}
	so we may replace the left-hand side of \eqref{eq:low n precise revisited} with $x_n$.
	Define 
	\begin{equation*}
		D(n)
		= - \frac{1+3\lambda}{\lambda}
		+ \frac{1}{12}\Bigl(1 - \frac{1}{\beta} - \frac{1}{1-\beta}\Bigr)
		+ \frac{a_{n,2}(\beta)}{\sqrt{2\beta(1-\beta)}}.
	\end{equation*}
	Then we have
	\[
		x_n
		= \frac{v_1}{C_{N,\lambda}} \times
			\Bigl[
				1
				+ D(n)\frac{1}{N}
				+ O\Bigl(\frac{1}{N^2}\Bigr)
			\Bigr],
	\]
	so
	\begin{align*}
		y_n
		= \frac{x_n}{x_{n+1}}
		= 1 + (D(n) - D(n+1))\frac{1}{N} + O(N^{-2}).
	\end{align*}
	Hence, it suffices to show that
	\[
		D(n_0 + 1) - D(n_0) = O(N^{-1}).
	\]
	This follows immediately from the observation that
		$D(n)$ is differentiable with respect to $\beta$,
		and its derivative is uniformly bounded in a neighborhood of $\beta = n_0/N$.

	The sequence $(y_n)_{0\le n\le N-1}$ satisfies the recurrence relation
	\begin{align*}
		&\quad (1+\lambda)(n+2)
			\Bigl(1 + \frac{2\lambda}{\lambda N - (1+2\lambda)(n+1) - 1}\Bigr)
			\Bigl(1 + \frac{2\lambda}{\lambda N - (1+2\lambda)n - 1}\Bigr) 
			\frac{1 - y_{n+1}}{y_{n+1}} \\
		&= - \frac{2\lambda^2(1+4\lambda)(N-n)}{(\lambda N - (1+2\lambda)(n+1) - 1)(\lambda N - (1+2\lambda)n - 1)}
		+ \lambda (N-n) (1 - y_n),
	\end{align*}
	which can be derived directly from \eqref{eq:recurrence vn-4}.
	Consequently, we have 
	\begin{align*}
		\abs{y_{n}-1}
		\le{}& \frac{2\lambda (1+4\lambda)}{(\lambda N - (1+2\lambda)(n+1) - 1)(\lambda N - (1+2\lambda)n - 1)} \\
			&+ \frac{(1+\lambda)(n+2)}{\lambda(N-n)}
				\Bigl(1 + \frac{2\lambda}{\lambda N - (1+2\lambda)(n+1) - 1}\Bigr)
				\Bigl(1 + \frac{2\lambda}{\lambda N - (1+2\lambda)n - 1}\Bigr) \\
			&\quad\times \frac{\abs{y_{n+1}-1}}{y_{n+1}} \\
		\le{}& \frac{A}{N^2}
		+ (1 - r) \frac{\abs{y_{n+1} - 1}}{y_{n+1}}
	\end{align*}
	with constants $A>0$ and $0 < r < 1$ that depend only on $\lambda$ and $\varepsilon$. Choose a constant $B>0$, depending only on $\lambda$ and $\varepsilon$, so that the inequality 
	\[
		\frac{A}{N^2} + (1-r)\frac{B/N^2}{1 - B/N^2} \le \frac{B}{N^2}
	\]
	holds for all sufficiently large $N$. For instance, $B = \frac{2A}{r}$ works. Since
	\[
		\abs{y_{n_0}-1} \le \frac{B}{N^2},
	\]
	by selecting a sufficiently large $A$, we inductively deduce that $\abs{y_n - 1} \le \frac{B}{N^2}$ for all $n\in [1,\varepsilon N]$. Thus, we obtain
	\[
		\abs{\log x_n}
		\le \sum_{k=n}^{n_0-1} \abs{\log y_k} + \abs{\log x_{n_0}}
		\le N\Bigl|\log\Bigl(1 - \frac{B}{N^2}\Bigr)\Bigr| + \abs{\log x_{n_0}}
		= O(N^{-1}),
	\]
	implying that $\abs{x_n - 1} = O(N^{-1})$ for all $n\in [1,\varepsilon N]$.

	\smallskip

	\noindent\textbf{Step 3.}
	$n \in [\frac{\lambda}{1+2\lambda}N - \delta, \frac{\lambda}{1+2\lambda}N + \delta]$,
		where $\delta = N^{\frac{1}{2}+a}$, $0 < a < \frac{1}{2}$.

	\nopagebreak
	We abbreviate $n/N = \beta$, and denote $n = (\frac{\lambda}{1+2\lambda}-\gamma)N$.
	Here, we allow $\gamma$ to be negative.
	Recall from \eqref{eq:recurrence solution integral}:
	\begin{align*}
		\pi_n
		=& -\frac{b}{\lambda^2}\binom{N-1}{n} (1+\lambda)^n
			\int_0^{\frac{\lambda}{1+2\lambda}}u^n(1-u)^{N-n-1}
				\Bigl(1 - \frac{1+2\lambda}{\lambda}u\Bigr)^{-\frac{2\lambda}{1+2\lambda}} \,du \\
		& -\frac{b}{\lambda}\binom{N-1}{n-1} (1+\lambda)^{n-1}
		\int_0^{\frac{\lambda}{1+2\lambda}}u^{n-1}(1-u)^{N-n-1}
			\Bigl(1 - \frac{1+2\lambda}{\lambda}u\Bigr)^{-\frac{2\lambda}{1+2\lambda}} \,du \\
		& -\frac{c - \frac{b}{\lambda}}{\lambda}\binom{N}{n} (1+\lambda)^n
			\int_0^{\frac{\lambda}{1+2\lambda}}u^n(1-u)^{N-n}
				\Bigl(1 - \frac{1+2\lambda}{\lambda}u\Bigr)^{-\frac{2\lambda}{1+2\lambda}} \,du \\
		={}& \frac{-b}{\lambda}
			(1+\lambda)^{n-1}
			\int_0^{\frac{\lambda}{1+2\lambda}} \binom{N}{n} t^n(1-t)^{N-n} S_n(t)
				\Bigl(
					1 - \frac{1+2\lambda}{\lambda}t
				\Bigr)^{-\frac{2\lambda}{1+2\lambda}} \,dt,
	\end{align*}
	where
	\begin{align*}
		S_n(t)
		&= \frac{1+\lambda}{\lambda}\frac{1-\beta}{1-t}
			+ \frac{\beta}{t}
			- \frac{1+\lambda}{\lambda}.
	\end{align*}
	Next, we shrink the integration interval to $(\frac{\lambda}{1+2\lambda}-\gamma_1,\frac{\lambda}{1+2\lambda})$, where $\gamma_1 = N^{-\frac{1}{2}+a}$.
	Note that this modification does not affect our argument;
		the lower bound remains unaffected,
		and for the upper bound, we observe, as in previous steps, that the integrand in the above expression is concentrated around $t = \frac{\lambda}{1+2\lambda}-\gamma$.
	On this interval,
		$S_n(t)$ is uniformly bounded and uniformly bounded away from $0$,
		thus allowing us to replace $S_n(t)$ with $1$.
	We will show that
	\begin{equation}\label{eq:vn asymptotic critical regime claim 1-1}
		e^{-2N\gamma_1}
		\ll \int_{\frac{\lambda}{1+2\lambda}-\gamma_1}^{\frac{\lambda}{1+2\lambda}}
			\binom{N}{n} t^n(1-t)^{N-n}
			\Bigl(
				1 - \frac{1+2\lambda}{\lambda}t
			\Bigr)^{-\frac{2\lambda}{1+2\lambda}} \,dt
	\end{equation}
	and
	\begin{equation}\label{eq:vn asymptotic critical regime claim 1-2}
		\int_{\frac{\lambda}{1+2\lambda}-\gamma_1}^{\frac{\lambda}{1+2\lambda}}
			\binom{N}{n} t^n(1-t)^{N-n}
			\Bigl(
				1 - \frac{1+2\lambda}{\lambda}t
			\Bigr)^{-\frac{2\lambda}{1+2\lambda}} \,dt
		\lesssim \max\{
				N^{-1+\frac{\lambda}{1+2\lambda}},
				N^{-\frac{1}{2}}|\gamma|^{\frac{1}{1+2\lambda}}
			\}.
	\end{equation}
	These claims together with \eqref{eq:b over pi0} conclude our proof since
	\[
		\frac{-b}{\lambda} = \pi_1 N (1 + o(1)).
	\]

	We first prove inequality \eqref{eq:vn asymptotic critical regime claim 1-1}.
	Denoting the integral on the right-hand side by $I_{N,n}$, we have 
	\begin{equation}\label{eq:critical regime lower bound step}
		I_{N,n}
		\ge \gamma_1 \binom{N}{n}
			\Bigl(
				\frac{\lambda}{1+2\lambda}-\gamma_1
			\Bigr)^n
			\Bigl(
				\frac{1}{1+2\lambda}
			\Bigr)^{N-n}.
	\end{equation}
	By Stirling's formula \cite[Equation (5.11.1)]{nist},
		we obtain
	\begin{align*}
		\log\binom{N}{n}
		&= - n\log \frac{n}{N}
			- (N-n)\log\frac{N-n}{N}
			+ O(\log N).
	\end{align*}
	Thus, by taking the logarithm on both sides of \eqref{eq:critical regime lower bound step} 
		and expanding in a Taylor series around $\frac{\lambda}{1+2\lambda}$, we arrive at 
	\[
		\log I_{N,n}
		\ge -\gamma_1 N + O(N\gamma_1^2),
	\]
	establishing our desired conclusion.

	To prove inequality \eqref{eq:vn asymptotic critical regime claim 1-2},
		we begin with the following quantitative local limit theorem for i.i.d.\ Bernoulli random variables:
	\[
		\biggl|
			\binom{N}{n}t^n(1-t)^{N-n} - \frac{1}{\sqrt{2\pi Nt(1-t)}}e^{-\frac{(n - Nt)^2}{2Nt(1-t)}}
		\biggr|
		\le \frac{0.516}{Nt(1-t)},
	\]
	as stated in Zolotukhin, Nagaev, and Chebotarev \cite[Lemma 5]{zolotukhin}.
	The error on the right-hand side can be neglected since
	\begin{align*}
		\int_{\frac{\lambda}{1+2\lambda}-\gamma_1}^\frac{\lambda}{1+2\lambda}
			\frac{0.516}{Nt(1-t)}
			\Bigl(
				1 - \frac{1+2\lambda}{\lambda}t
			\Bigr)^{-\frac{2\lambda}{1+2\lambda}} \,dt
		&= O(N^{-1})
			\int_{\frac{\lambda}{1+2\lambda}-\gamma_1}^\frac{\lambda}{1+2\lambda}
			\Bigl(
				1 - \frac{1+2\lambda}{\lambda}t
			\Bigr)^{-\frac{2\lambda}{1+2\lambda}} \,dt \\
		&= o(N^{-1+\frac{\lambda}{1+2\lambda}}).
	\end{align*}
	Therefore, it suffices to show 
	\[
		\int_{\frac{\lambda}{1+2\lambda}-\gamma_1}^\frac{\lambda}{1+2\lambda}
			\frac{1}{\sqrt{2\pi Nt(1-t)}}e^{-\frac{(n - Nt)^2}{2Nt(1-t)}}
			\Bigl(
				1 - \frac{1+2\lambda}{\lambda}t
			\Bigr)^{-\frac{2\lambda}{1+2\lambda}} \,dt
		\lesssim \max\{
			N^{-1+\frac{\lambda}{1+2\lambda}},
			N^{-\frac{1}{2}}|\gamma|^{\frac{1}{1+2\lambda}}
		\},
	\]
	or,
	\begin{equation*}
		\int_{\frac{\lambda}{1+2\lambda}-\gamma_1}^\frac{\lambda}{1+2\lambda}
			e^{- N(\beta - t)^2}
			\Bigl(
				\frac{\lambda}{1+2\lambda} - t
			\Bigr)^{-\frac{2\lambda}{1+2\lambda}} \,dt
		\lesssim \max\{
			N^{-\frac{1}{2}+\frac{\lambda}{1+2\lambda}},
			|\gamma|^{\frac{1}{1+2\lambda}}
		\}.
	\end{equation*}
	Note that the factor $\frac{1}{2t(1-t)}$ in the exponent has been replaced by a smaller constant $1$.
	By substituting $\frac{\lambda}{1+2\lambda} - t = u$, we obtain
	\begin{align*}
		\int_{\frac{\lambda}{1+2\lambda}-\gamma_1}^\frac{\lambda}{1+2\lambda}
			e^{- N(\beta - t)^2}
			\Bigl(
				\frac{\lambda}{1+2\lambda} - t
			\Bigr)^{-\frac{2\lambda}{1+2\lambda}} \,dt
		&= \int_0^{\gamma_1}
			e^{- N(\gamma - u)^2}
			u^{-\frac{2\lambda}{1+2\lambda}} \,du.
	\end{align*}
	We divide the integration interval into two parts:
	\begin{align*}
		\int_0^{\gamma_1}
			e^{- N(\gamma - u)^2}
			u^{-\frac{2\lambda}{1+2\lambda}} \,du
		&= \int_0^{\gamma_2}
				e^{- N(\gamma - u)^2}
				u^{-\frac{2\lambda}{1+2\lambda}} \,du
			+ \int_{\gamma_2}^{\gamma_1}
				e^{- N(\gamma - u)^2}
				u^{-\frac{2\lambda}{1+2\lambda}} \,du,
	\end{align*}
	where $\gamma_2 = \max\{N^{-\frac{1}{2}},|\gamma|\}$.
	For the first integral, observe that
	\begin{align*}
		\int_0^{\gamma_2}
				e^{- N(\gamma - u)^2}
				u^{-\frac{2\lambda}{1+2\lambda}} \,du
		&\le \int_0^{\gamma_2} u^{-\frac{2\lambda}{1+2\lambda}} \,du
		\lesssim \max\{
				N^{-\frac{1}{2}+\frac{\lambda}{1+2\lambda}},
				|\gamma|^{\frac{1}{1+2\lambda}}
			\}.
	\end{align*}
	For the second integral, we assert that
	\begin{equation*}
		\int_{\gamma_2}^{\gamma_1}
				e^{- N(\gamma - u)^2}
				u^{-\frac{2\lambda}{1+2\lambda}} \,du
		\le N^{\frac{\lambda}{1+2\lambda}}
			\int_{\gamma_2}^\infty
				e^{- N(\gamma - u)^2} \,du 
		\le N^{\frac{\lambda}{1+2\lambda}}
			\int_0^\infty
				e^{- N u^2} \,du 
		\lesssim N^{-\frac{1}{2}+\frac{\lambda}{1+2\lambda}},
	\end{equation*}
	and the proof is complete.
\end{proof}

\subsection{Basic Properties of the Energy Landscape}\label{subsection:landscape property}
In this subsection,
	we establish some basic properties of
	the quasi-stationary measure of the contact process on stars.
These properties are crucial for subsequent computations within the potential theoretic framework.

\begin{lemma}
	\label{lem:mass at metastable}
	Let $v_n = \mu(1,n)$ denote the scaled quasi-stationary measure at states with an infected hub.
	Define $m = \lfloor\frac{\lambda}{1+\lambda}N\rfloor$,
		which represents the number of infected leaves at the metastable state.
	Then, as $N \to \infty$, the mass at the metastable state satisfies
	\begin{equation*}
		v_m
		\simeq \frac{1+\lambda}{\sqrt{2\pi\lambda N}}
			\Bigl(
				\frac{1+\lambda}{\lambda}
			\Bigr)^{N + \frac{2}{1+2\lambda}}.
	\end{equation*}
\end{lemma}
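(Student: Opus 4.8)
The plan is to read $v_m$ off from the high‑infection‑regime asymptotics of Theorem \ref{thm:stationary asymptotic high} and then evaluate the two resulting factors by Stirling's formula and an elementary sum‑to‑integral comparison. First I would note that the metastable fraction $\frac{\lambda}{1+\lambda}$ is strictly larger than the transitional fraction $\frac{\lambda}{1+2\lambda}$, so for all large $N$ there is a fixed $\varepsilon>0$ with $m=\lfloor\frac{\lambda}{1+\lambda}N\rfloor\in[\frac{\lambda}{1+2\lambda}N+\varepsilon N,N]$. Part (1) of Theorem \ref{thm:stationary asymptotic high} then yields
\[
	v_m = v_m^{\mathrm{high}}(1+O(N^{-1})) = \binom{N}{m}\lambda^{m-N}f_{N,\lambda}(m)\,(1+O(N^{-1})),
\]
so it remains to estimate the binomial factor and the prefactor $f_{N,\lambda}(m)$ separately.

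For the binomial factor, writing $\beta_0=\frac{\lambda}{1+\lambda}$ and using the identity $\beta_0^m(1-\beta_0)^{N-m}=\lambda^m(1+\lambda)^{-N}$, I would rewrite
\[
	\binom{N}{m}\lambda^{m-N} = \Bigl(\frac{1+\lambda}{\lambda}\Bigr)^{N}\binom{N}{m}\beta_0^m(1-\beta_0)^{N-m}.
\]
The bracketed quantity is the mass of $\mathrm{Binom}(N,\beta_0)$ at the point $m$, which is within distance $1$ of its mean $\beta_0 N$; hence by Stirling's formula (equivalently, the local central limit theorem) it equals $\frac{1}{\sqrt{2\pi N\beta_0(1-\beta_0)}}(1+o(1)) = \frac{1+\lambda}{\sqrt{2\pi\lambda N}}(1+o(1))$, using $\beta_0(1-\beta_0)=\lambda(1+\lambda)^{-2}$. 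Therefore $\binom{N}{m}\lambda^{m-N}\simeq \frac{1+\lambda}{\sqrt{2\pi\lambda N}}\bigl(\frac{1+\lambda}{\lambda}\bigr)^{N}$.

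For the prefactor I would take logarithms,
\[
	\log f_{N,\lambda}(m) = \sum_{k=m}^{N-1}\log\Bigl(1+\frac{1}{(1+2\lambda)(k+1)-\lambda N}\Bigr),
\]
and observe that throughout $k\in[m,N-1]$ the denominator $(1+2\lambda)(k+1)-\lambda N$ is positive and of order $N$ (bounded below near $k=m$ by $\frac{\lambda^2}{1+\lambda}N+O(1)$, precisely because $m$ is bounded away from the zero $\frac{\lambda}{1+2\lambda}N$ of this affine function). Hence each summand is $O(N^{-1})$, the expansion $\log(1+x)=x+O(x^2)$ costs only $O(N^{-1})$ in total, and since $j\mapsto\frac{1}{(1+2\lambda)j-\lambda N}$ is positive and decreasing, comparing the resulting sum with the integral $\int\frac{dj}{(1+2\lambda)j-\lambda N}=\frac{1}{1+2\lambda}\log((1+2\lambda)j-\lambda N)$ gives
\[
	\log f_{N,\lambda}(m) = \frac{1}{1+2\lambda}\log\frac{(1+\lambda)N}{(1+2\lambda)m-\lambda N}+O(N^{-1}).
\]
Substituting $m=\beta_0 N+O(1)$ makes $(1+2\lambda)m-\lambda N=\frac{\lambda^2}{1+\lambda}N+O(1)$, so the ratio inside the logarithm is $\frac{(1+\lambda)^2}{\lambda^2}(1+O(N^{-1}))$ and $f_{N,\lambda}(m)\simeq\bigl(\frac{1+\lambda}{\lambda}\bigr)^{2/(1+2\lambda)}$. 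Multiplying the two estimates yields
\[
	v_m \simeq \frac{1+\lambda}{\sqrt{2\pi\lambda N}}\Bigl(\frac{1+\lambda}{\lambda}\Bigr)^{N}\Bigl(\frac{1+\lambda}{\lambda}\Bigr)^{\frac{2}{1+2\lambda}} = \frac{1+\lambda}{\sqrt{2\pi\lambda N}}\Bigl(\frac{1+\lambda}{\lambda}\Bigr)^{N+\frac{2}{1+2\lambda}},
\]
which is the claim.

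I do not expect a genuine obstacle: the lemma is essentially bookkeeping on top of Theorem \ref{thm:stationary asymptotic high}. The step demanding the most care is the evaluation of the subexponential prefactor $f_{N,\lambda}(m)$, where one must confirm that the sum‑to‑integral comparison error is genuinely $o(1)$ and that the $O(1)$ fluctuation introduced by the floor in $m=\lfloor\beta_0 N\rfloor$ perturbs $(1+2\lambda)m-\lambda N$ only at relative order $N^{-1}$, so that the exact constant $\bigl(\frac{1+\lambda}{\lambda}\bigr)^{2/(1+2\lambda)}$ is recovered; a secondary check, already used above, is that the denominators appearing in $f_{N,\lambda}(m)$ stay uniformly bounded away from $0$.
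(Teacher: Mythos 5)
Your proposal is correct and follows the same strategy as the paper: reduce to $v_m^{\mathrm{high}}$ via Theorem \ref{thm:stationary asymptotic high}(1), evaluate the binomial factor by Stirling (equivalently the local CLT at the mean), and evaluate $f_{N,\lambda}(m)$ by exponentiating a sum‑to‑integral comparison of $\log(1+\cdot)$ terms --- which is exactly the computation the paper invokes by analogy with \eqref{eq:alternative gn proof}. You simply spell out those two steps in more detail than the paper does.
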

\begin{proof}
	By the asymptotic formulas for $v_n$ in Theorem \ref{thm:stationary asymptotic high},
		it suffices to show that
	\begin{equation}\label{eq:mass at metastable claim}
		v_m^{\mathrm{high}}
		= \binom{N}{m}\lambda^{m-N} f_{N,\lambda}(m)
		= \frac{1+\lambda}{\sqrt{2\pi\lambda N}}
			\Bigl(
				\frac{1+\lambda}{\lambda}
			\Bigr)^{N + \frac{2}{1+2\lambda}}
			(1+o(1)).
	\end{equation}

	By Stirling's formula \cite[Equation (5.11.1)]{nist},
		we obtain
	\begin{equation*}
		\binom{N}{m}
		= \frac{1}{\sqrt{2\pi\lambda N}}
			\frac{(1+\lambda)^{N+1}}{\lambda^{m}}
			(1+o(1)).
	\end{equation*}
	Also, a computation analogous to \eqref{eq:alternative gn proof} reveals that
		for $n \in [\frac{\lambda}{1+2\lambda}N+\varepsilon N,N]$, where $\varepsilon > 0$,
		we have
	\begin{equation*}
		f_{N,\lambda}(n)
		= \Bigl(
			\frac{1+2\lambda}{1+\lambda}\frac{n}{N} - \frac{\lambda}{1+\lambda}
		\Bigr)^{-\frac{1}{1+2\lambda}}
		(1+O(N^{-1})).
	\end{equation*}
	Summing up, we conclude that \eqref{eq:mass at metastable claim} holds.
\end{proof}

\begin{lemma}
	\label{lem:total mass}
	Let $u_n = \mu(0,n)$ and $v_n = \mu(1,n)$ denote the scaled quasi-stationary measure.
	Let $\varepsilon > 0$ be given sufficiently small.
	Define $m = \lfloor\frac{\lambda}{1+\lambda}N\rfloor$,
		which represents the number of infected leaves at the metastable state,
		and set $R = N^{\frac{1}{2}+\varepsilon}$.
	Then, as $N \to \infty$, the total mass of the process satisfies
	\begin{equation}\label{eq:total mass}
 		Z_{N,\lambda}
		= \sum_{n = 0}^N (u_n + v_n)
		\simeq \sum_{|n - m| < R} v_n 
		\simeq \Bigl(
				\frac{1+\lambda}{\lambda}
			\Bigr)^{N + \frac{2}{1+2\lambda}},
 	\end{equation}
	where the second summation is taken over all integers $n$ satisfying $|n - m| < R$.
\end{lemma}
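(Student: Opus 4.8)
The plan is to use the recurrence \eqref{eq:recurrence vn} to rewrite the total mass as a weighted sum of the $v_n$'s, and then to show that this sum is carried, up to a factor $1+o(1)$, by the indices $n$ within $R=N^{1/2+\varepsilon}$ of the metastable value $m=\lfloor\frac{\lambda}{1+\lambda}N\rfloor$, where $(v_n)$ is essentially a rescaled $\mathrm{Binom}(N,\frac{\lambda}{1+\lambda})$ mass. Concretely, from $a_n=\lambda(N-n+1)v_{n-1}-nv_n$ (the third line of \eqref{eq:recurrence vn}, with index shifted) and $a_n=nu_n$ for $n\ge1$ one gets $u_n+v_n=\frac{\lambda(N-n+1)}{n}v_{n-1}$, hence
\[
 Z_{N,\lambda}=(u_0+v_0)+\sum_{n=0}^{N-1}\frac{\lambda(N-n)}{n+1}\,v_n .
\]
Since $N-m=\frac{N}{1+\lambda}+O(1)$, the weight $\frac{\lambda(N-n)}{n+1}$ equals $1+O(R/N)=1+o(1)$ uniformly for $|n-m|<R$, while it is at most $\lambda N$ on all of $[0,N]$; so everything reduces to estimating $\sum v_n$ against a weight that is locally constant near $m$.

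\emph{Main window.} For $|n-m|<R$ the process lies deep in the high-infection regime, as $m-R>\frac{\lambda}{1+2\lambda}N+\varepsilon'N$ for a fixed $\varepsilon'>0$; thus Theorem \ref{thm:stationary asymptotic high} gives $v_n=v_n^{\mathrm{high}}(1+o(1))$ uniformly. Writing $p=\frac{\lambda}{1+\lambda}$ and $b_{N,p}(n)=\binom{N}{n}p^n(1-p)^{N-n}$, one has the exact identity $\binom{N}{n}\lambda^{n-N}=(\frac{1+\lambda}{\lambda})^{N}b_{N,p}(n)$, so $v_n^{\mathrm{high}}=(\frac{1+\lambda}{\lambda})^{N}b_{N,p}(n)f_{N,\lambda}(n)$; the evaluation of $f_{N,\lambda}$ carried out in the proof of Lemma \ref{lem:mass at metastable}, together with its slow variation, yields $f_{N,\lambda}(n)=(\frac{1+\lambda}{\lambda})^{\frac{2}{1+2\lambda}}(1+o(1))$ uniformly over the window. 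Since $R=N^{1/2+\varepsilon}\gg\sqrt N$, Chebyshev's inequality gives $\sum_{|n-m|<R}b_{N,p}(n)\to1$, and therefore
\[
 \sum_{|n-m|<R}\frac{\lambda(N-n)}{n+1}\,v_n\;\simeq\;\sum_{|n-m|<R}v_n\;\simeq\;\Bigl(\tfrac{1+\lambda}{\lambda}\Bigr)^{N+\frac{2}{1+2\lambda}} .
\]

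\emph{Remainder.} The remaining task is to bound $(u_0+v_0)+\sum_{n<N,\ |n-m|\ge R}\frac{\lambda(N-n)}{n+1}v_n$. On the high band $n\ge\frac{\lambda}{1+2\lambda}N+N^{1/2+a}$ the factor $f_{N,\lambda}(n)$ is only polynomially large, so $v_n\lesssim(\frac{1+\lambda}{\lambda})^{N}\mathrm{poly}(N)\,b_{N,p}(n)$, and $\sum_{|n-m|\ge R}b_{N,p}(n)\le e^{-cN^{2\varepsilon}}$ by Hoeffding's inequality, making this part superpolynomially small relative to the main term. On the complementary band $n\le\frac{\lambda}{1+2\lambda}N+N^{1/2+a}$, Theorem \ref{thm:stationary asymptotic low} (parts (1)--(2) in the low range, part (3) in the transitional range), together with $C_{N,\lambda}\asymp N^{\frac{1}{1+2\lambda}-1}(\frac{1+2\lambda}{\lambda(1+\lambda)})^{N}$ (from $B(\frac{1}{1+2\lambda},N)\asymp N^{-\frac{1}{1+2\lambda}}$), gives $v_n\le e^{(\rho+o(1))N}$ with $\rho=\log\frac{1+2\lambda}{\lambda(1+\lambda)}+\frac{\lambda}{1+2\lambda}\log(1+\lambda)$; multiplying by the weight $\le\lambda N$ and the $O(N)$ terms still leaves $o((\frac{1+\lambda}{\lambda})^{N})$ provided $\rho<\log\frac{1+\lambda}{\lambda}$. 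Finally $u_0+v_0\asymp NC_{N,\lambda}$ by \eqref{eq:u0 asymptotic} and Theorem \ref{thm:stationary asymptotic low}(1), which is negligible as well. Assembling the pieces gives $Z_{N,\lambda}\simeq\sum_{|n-m|<R}v_n\simeq(\frac{1+\lambda}{\lambda})^{N+\frac{2}{1+2\lambda}}$.

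\emph{Main obstacle.} The crux is the strict exponential gap $\rho<\log\frac{1+\lambda}{\lambda}$, which says that the transitional plateau lies strictly below the metastable peak; equivalently $\log\frac{1+\lambda}{\lambda}-\rho=I(\frac{\lambda}{1+2\lambda})>0$, where $I(\beta)=\beta\log\frac{\beta}{p}+(1-\beta)\log\frac{1-\beta}{1-p}$ is the relative-entropy (large-deviation) rate function of $\mathrm{Bernoulli}(p)$ with $p=\frac{\lambda}{1+\lambda}$, which is positive since $\frac{\lambda}{1+2\lambda}\ne p$ — this is precisely the positivity of the potential barrier recorded in the Remark after Theorem \ref{thm:stationary asymptotic low}, so it may be quoted rather than re-derived. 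The remaining difficulty is pure bookkeeping: keeping uniform control of the subexponential factor $f_{N,\lambda}$ near the transition point $\frac{\lambda}{1+2\lambda}N$, where it carries a mild algebraic singularity, while the binomial concentration in the window is routine.
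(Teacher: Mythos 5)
Your proof is correct, and it takes a genuinely different route from the paper's. The paper separates $\sum_n u_n$ and $\sum_n v_n$, computes $\sum_{|n-m|<R} v_n$ by a Stirling/Gaussian local approximation (reducing the window sum to an error-function integral), and then shows $\sum_n u_n = o(\sum_n v_n)$ from the recurrence; it is terse about the tails, simply asserting that $v_n/v_m$ decays rapidly for $|n-m|\ge R$. You instead combine $u_n$ and $v_n$ at the outset through the clean identity $u_n + v_n = \frac{\lambda(N-n+1)}{n}v_{n-1}$, so the total mass becomes a single weighted sum of $v_n$'s with weight $\frac{\lambda(N-n)}{n+1}$, which is $1+o(1)$ on the window and at most $\lambda N$ elsewhere; in the window you identify $\binom{N}{n}\lambda^{n-N} = \bigl(\frac{1+\lambda}{\lambda}\bigr)^N b_{N,p}(n)$ with $p = \frac{\lambda}{1+\lambda}$ and invoke binomial concentration, and you control the tails quantitatively via Hoeffding on the high band and a large-deviation rate-function inequality $\rho < \log\frac{1+\lambda}{\lambda}$ on the low band. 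Your route is a bit more explicit about why the tails are negligible, while the paper's is a bit more direct in evaluating the window sum; both are sound and of comparable difficulty.

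One small inaccuracy in your closing paragraph: the quantity $I\bigl(\frac{\lambda}{1+2\lambda}\bigr) = \log\frac{1+\lambda}{\lambda}-\rho = \frac{2+3\lambda}{1+2\lambda}\log(1+\lambda)-\log(1+2\lambda)$ is \emph{not} the potential-barrier exponent $2\log(1+\lambda)-\log(1+2\lambda)$ recorded in the Remark after Theorem \ref{thm:stationary asymptotic low}; the Remark's quantity is the peak-to-saddle ratio $v_m/v_0$, whereas yours is the strictly smaller peak-to-transition-plateau gap. The misattribution does not create a gap, since your independent justification — positivity of the relative-entropy rate function $I(\beta)$ at $\beta=\frac{\lambda}{1+2\lambda}\neq p$ — is self-contained and correct, but the Remark cannot be ``quoted rather than re-derived'' for this purpose.
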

\begin{proof}
	We begin by proving the relation 
	\[
		\sum_{n=0}^N v_n
		\simeq \sum_{|n - m| < R} v_n 
		\simeq \frac{\sqrt{2\pi\lambda N}}{1+\lambda} v_m .
	\]
	The right-hand side of the above relation 
		agrees with the right-hand side of \eqref{eq:total mass}
		by Lemma \ref{lem:mass at metastable}.
	Write $n = m + k$.
	By the asymptotic formulas for $v_n$
		in Theorems \ref{thm:stationary asymptotic high}
		and \ref{thm:stationary asymptotic low},
		$v_n/v_m$ decays rapidly as $N\to \infty$ if $k \ge R$.
	Hence, we may neglect integers $n$ satisfying $|n - m| \ge R$,
		and suppose that $k < R$.
	Similarly to the proof of Lemma \ref{lem:mass at metastable},
		we obtain
	\begin{align*}
		v_n
		&= v_n^{\mathrm{high}}(1+o(1)) \\
		&= \frac{1}{\sqrt{2\pi N}}
			\frac{1}{(\frac{n}{N})^{n+\frac{1}{2}}(\frac{N-n}{N})^{N-n+\frac{1}{2}}}
			\lambda^{n-N}
			\Bigl(
				\frac{1+2\lambda}{1+\lambda}\frac{n}{N} - \frac{\lambda}{1+\lambda}
			\Bigr)^{-\frac{1}{1+2\lambda}}
			(1+o(1)) \\
		&= v_m
			\frac{1}{
				(1 + \frac{1+\lambda}{\lambda}\frac{k}{N})^{n}
				(1-(1+\lambda)\frac{k}{N})^{N-n}
			}
			(1+o(1)).
	\end{align*}
	It can also be verified that
	\[
		\frac{1}{
			(1 + \frac{1+\lambda}{\lambda}\frac{k}{N})^{n}
			(1-(1+\lambda)\frac{k}{N})^{N-n}
		}
		= \exp\Bigl[
			-\frac{(1+\lambda)^2}{2\lambda}\frac{k^2}{N}
		\Bigr]
		\Bigl[
			1 + O\Bigl(\frac{k}{N}\Bigr)
		\Bigr]
	\]
	by taking the logarithm on both sides and expanding in a Taylor series.
	Thus, it follows that
	\begin{align*}
		\sum_{|n - m| < R} v_n
		&\simeq v_m \sum_{|k| < R} \exp\Bigl[
			-\frac{(1+\lambda)^2}{2\lambda}\frac{k^2}{N}
		\Bigr] 
		\simeq \frac{2\pi\lambda N}{1+\lambda} v_m
			\erf\Bigl(
				\frac{1+\lambda}{\sqrt{2\lambda}}\frac{R}{\sqrt{N}}
			\Bigr)
		\simeq \frac{2\pi\lambda N}{1+\lambda} v_m ,
	\end{align*}
	where $\erf(x)$ denotes the error function.

	It remains to prove that
		the sum of the terms $u_n$ is negligible with respect to the sum of the terms $v_n$.
	By the relation \eqref{eq:recurrence vn-2} between the terms $u_n$ and $v_n$, we have 
	\begin{align*}
		\sum_{n=0}^N u_n
		&= u_0 + \lambda N v_0
			+ \sum_{n=2}^N \Bigl(
				\lambda\frac{N-n}{n+1} - 1
			\Bigr) v_n \\
		&= \sum_{|n-m|<R}	\Bigl(
			\lambda\frac{N-n}{n+1} - 1
			\Bigr) v_n (1+o(1))
		= O(N^{-\frac{1}{2}+\varepsilon}) \sum_{|n-m|<R} v_n,
	\end{align*}
	establishing our claim.
\end{proof}

\begin{lemma}
	\label{lem:sum of scaled vn}
	Let $v_n = \mu(1,n)$ denote the scaled quasi-stationary measure at states with an infected hub.
	Then, as $N \to \infty$, we have 
	\begin{equation}\label{eq:sum of scaled vn}
		\sum_{n = 0}^N \frac{v_n}{(1+\lambda)^{n}}
		= \lambda N C_{N,\lambda} (1 + O(N^{-1})).
	\end{equation}
\end{lemma}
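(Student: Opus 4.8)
The plan is to avoid the delicate Laplace-method estimates near the transition point $\tfrac{\lambda}{1+2\lambda}N$ entirely, and instead extract an \emph{exact} closed form for the sum directly from the three-term recurrence. Set $w_n = v_n/(1+\lambda)^n$ for $0 \le n \le N$, and $w_{N+1} = w_{N+2} = 0$. Substituting $v_m = (1+\lambda)^m w_m$ into the last line of \eqref{eq:recurrence vn} and dividing by $(1+\lambda)^{n+1}$ converts it into the cleaner recurrence
\[
	(n+2)(1+\lambda)w_{n+2} - (n+2+\lambda N)w_{n+1} + \lambda(N-n)w_n = 0
	\qquad (0 \le n \le N).
\]

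I would then sum this identity over $n = 0,1,\dots,N$. After reindexing the three sums and using $w_{N+1}=w_{N+2}=0$, everything can be expressed through $W := \sum_{n=0}^N w_n$ and $S := \sum_{n=0}^N n\,w_n$; the decisive cancellation is that the coefficient of $S$ equals $(1+\lambda)-1-\lambda = 0$, so $S$ drops out while the coefficient of $W$ collapses to $-1$, leaving only the two boundary contributions from $w_0$ and $w_1$. The upshot is the exact identity
\[
	\sum_{n=0}^N \frac{v_n}{(1+\lambda)^n} = W = (1+\lambda N)\,v_0 - v_1 .
\]
Here it is important that the recurrence in \eqref{eq:recurrence vn} holds all the way down to $n=0$ for the sequence $(v_n)$ (as asserted in Proposition \ref{prop:recurrence}), since otherwise an extra boundary term would survive; this bookkeeping is the only point that requires care.

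It then remains to insert the asymptotics of $v_0$ and $v_1$. For $v_1$, Proposition \ref{prop:vn initial value} together with the normalization $v_N = 1$ gives $v_1 = C_{N,\lambda}(1+\lambda)(1+O(N^{-1}))$. For $v_0$, I would use the stationary balance equation at the state $(1,0)$, namely $(\lambda N+1)v_0 = \alpha u_0 + v_1$ — the only inflows to $(1,0)$ being the regenerative rate $\alpha$ from $(0,0)$ and the healing of the single infected leaf from $(1,1)$ — and combine it with \eqref{eq:u0 asymptotic}, i.e.\ $\alpha u_0 = \lambda N C_{N,\lambda}(1+O(N^{-1}))$, and with the bound on $v_1$ just obtained; this yields $v_0 = C_{N,\lambda}(1+O(N^{-1}))$. (Alternatively, $v_0$ can be read off from the recurrence \eqref{eq:recurrence vn} at $n=0$ using Theorem \ref{thm:stationary asymptotic low} for $v_2$; both routes give the same conclusion.)

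Plugging back in, $(1+\lambda N)v_0 = \lambda N C_{N,\lambda}(1+O(N^{-1}))$ — the relative $O(N^{-1})$ error on $v_0$ produces only an $O(1)$ additive error after multiplication by $\lambda N$ — whereas $v_1 = O(C_{N,\lambda}) = O(N^{-1})\cdot \lambda N C_{N,\lambda}$ is of lower order. Hence $W = \lambda N C_{N,\lambda}(1+O(N^{-1}))$, which is the assertion. There is no genuine analytic obstacle: the entire content is recognizing the summation-by-parts identity and tracking the two boundary terms correctly, after which one merely substitutes previously established estimates.
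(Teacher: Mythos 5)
Your proof is correct, but it reaches the same exact identity as the paper by a longer route. The paper's proof is a one-liner: the stationary balance of the trace process at $(0,0)$ directly gives $\sum_{n=0}^N v_n(1+\lambda)^{-n} = \alpha \mu(0,0)$, after which \eqref{eq:u0 asymptotic} finishes. You instead sum the three-term recurrence over $n\in[0,N]$ after dividing out $(1+\lambda)^n$; the verification of the cancellation $(1+\lambda)-1-\lambda=0$ (killing the $\sum n w_n$ term) and the boundary bookkeeping are both sound, and you arrive at the exact identity $W=(1+\lambda N)v_0-v_1$. Note, however, that this is literally the same identity the paper uses, in disguise: the full-chain balance at $(1,0)$, which you invoke to pin down $v_0$, reads $(1+\lambda N)v_0-v_1=\alpha u_0$, so your Abel summation of the recurrence recovers $W=\alpha u_0$ by a detour. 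Both proofs therefore rest on the same analytic input \eqref{eq:u0 asymptotic}, so the claim that you ``avoid the delicate Laplace-method estimates entirely'' overstates the gain — you merely avoid applying them to the sum directly. Your version does have the virtue of not requiring the trace-process formalism, and it makes the crucial hypothesis visible (namely, that the $v_n$-recurrence in Proposition \ref{prop:recurrence} is valid at $n=0$, where $v_n$ and $\pi_n$ part ways). In short: correct, a legitimately different derivation of the same identity, slightly more work for the same ingredients.
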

The proof of Lemma \ref{lem:sum of scaled vn} is given in Section \ref{section:proof}.

\section{Potential Theory for Non-Reversible Markov Chains}\label{section:theoretical background}
In this section,
	we introduce basic terminologies and
	review a potential theoretic approach for estimating the mean extinction time.
These concepts and methods are employed in the next section.
We refer the reader to \cite{seo-arxiv} for more details.

Throughout this section,
let $(X(t))_{t\ge 0}$ be an irreducible continuous-time Markov process taking values in a finite set $\mathcal{H}$,
with jump rates $r: \mathcal{H}\times\mathcal{H} \to [0,\infty)$
and stationary distribution $\mu$.

\subsection{Basic Terminology}
Since $\mathcal{H}$ is a finite set, the space $L^2(\mu)$ consists of all real functions on $\mathcal{H}$.
The generator $\mathscr{L}$ is an operator acting on $f \in L^2(\mu)$ by
\begin{equation*}
	(\mathscr{L}f)(x)
	= \sum_{y\in \mathcal{H}} r(x,y)(f(y)-f(x)),\qquad
	x\in\mathcal{H}.
\end{equation*}
Then $\mathscr{L}$ defines a positive semi-definite quadratic form on $L^2(\mu)$ given by
\begin{equation*}
\begin{split}
	\mathscr{D}(f)
	&= \langle f, -\mathscr{L}f\rangle_\mu
	= \frac{1}{2} \sum_{x\in \mathcal{H}} \sum_{y\in\mathcal{H}} \mu(x)r(x,y) [f(y)-f(x)]^2
\end{split}
\end{equation*}
where $f\in L^2(\mu)$, called the \emph{Dirichlet form}.

The time-reversed process of $(X(t))_{t\ge 0}$, called the \emph{adjoint process}, is
the continuous-time Markov process $(X^\dagger(t))_{t\ge 0}$ on $\mathcal{H}$
with rates
\begin{equation*}
	r^\dagger(x,y)
	= \frac{\mu(y)r(y,x)}{\mu(x)}, \qquad
	x,y\in\mathcal{H}.
\end{equation*}
The generator $\mathscr{L}^\dagger$ of the adjoint process, given by
\begin{equation*}
	(\mathscr{L}^\dagger f)(x)
	= \sum_{y\in\mathcal{H}} r^\dagger(x,y) (f(y)-f(x)), \qquad
	x\in\mathcal{H}
\end{equation*}
for $f\in L^2(\mu)$, is indeed the adjoint operator to $\mathscr{L}$, that is,
\[
	\langle f,\mathscr{L}g\rangle_\mu = \langle \mathscr{L}^\dagger f, g\rangle_\mu
\]
for $f,g\in L^2(\mu)$.
The process is said to be \emph{reversible} if $\mathscr{L}^\dagger = \mathscr{L}$.

Define the \emph{symmetrized process} of $(X(t))_{t\ge 0}$ as
the continuous-time Markov process $(X^s(t))_{t\ge 0}$ on $\mathcal{H}$ with rates
\begin{equation*}
	r^s(x,y)
	= \frac{1}{2\mu(x)}[\mu(x)r(x,y) + \mu(y)r(y,x)],\qquad
	x,y\in\mathcal{H}.
\end{equation*}
Note that $\mu$ is the stationary distribution for the reversible process $(X^s(t))_{t\ge 0}$.

For nonempty disjoint subsets $A$ and $B$ of $\mathcal{H}$,
	define the \emph{equilibrium potential} between $A$ and $B$ 
	with respect to the process $(X(t))_{t\ge 0}$ 
	as the function $h_{A,B}: \mathcal{H} \to [0,1]$ given by
\begin{equation*}
	h_{A,B}(x) = \PP_x[\tau_A < \tau_B], \qquad
	x \in \mathcal{H},
\end{equation*}
where $\PP_x$ denotes the law of the process $(X(t))_{t\ge 0}$ starting from $x$.
It can be immediately checked that
\[
	h_{B,A} = 1 - h_{A,B},
\]
and
\[
	\begin{cases}  \smallskip
		h_{A,B} \equiv 1	&	\text{on } A, \\  \smallskip
		h_{A,B} \equiv 0	&	\text{on } B,\text{ and } \\  \smallskip
		\mathscr{L}h_{A,B} \equiv 0	&	\text{on } (A\cup B)^c.
	\end{cases}
\]
Denote the equilibrium potential with respect to the adjoint process $(X^\dagger(t))_{t\ge 0}$
	by $h_{A,B}^\dagger$.
The \emph{capacity} between $A$ and $B$ with respect to the process $(X(t))_{t\ge 0}$ is defined as
\begin{equation*}
	\CAP(A,B) = \mathscr{D}(h_{A,B}).
\end{equation*}
Note that we have 
\[
	\CAP(A,B) = \mathscr{D}(h_{A,B}) = \mathscr{D}(h_{B,A}) = \CAP(B,A).
\]

The capacity satisfies two important basic properties.
See \cite[Propositions 1.9, 1.10]{seo-arxiv} for more details.
\begin{proposition}\label{prop:time symmetry of capacity}
	Let $A$ and $B$ be two nonempty disjoint subsets of $\mathcal{H}$.
	Then, we have 
	\begin{equation*}
		\CAP(A,B) = \CAP^\dagger(A,B),
	\end{equation*}
	where $\CAP^\dagger$ denotes the capacity with respect to the adjoint process.
\end{proposition}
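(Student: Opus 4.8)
The plan is to reduce the statement to the elementary identity $\CAP(A,B)=\langle h_{A,B},-\mathscr{L}h_{A,B}\rangle_\mu$ together with the observation that the Dirichlet form does not see the direction of time. First I would check that the Dirichlet form of the adjoint process coincides with $\mathscr{D}$: swapping the summation variables $x$ and $y$ gives
\[
	\mathscr{D}^\dagger(f)
	= \frac{1}{2}\sum_{x,y\in\mathcal{H}}\mu(x)r^\dagger(x,y)[f(y)-f(x)]^2
	= \frac{1}{2}\sum_{x,y\in\mathcal{H}}\mu(y)r(y,x)[f(y)-f(x)]^2
	= \mathscr{D}(f),
\]
so that $\CAP^\dagger(A,B)=\mathscr{D}^\dagger(h_{A,B}^\dagger)=\mathscr{D}(h_{A,B}^\dagger)$, and it suffices to prove $\mathscr{D}(h_{A,B})=\mathscr{D}(h_{A,B}^\dagger)$.

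Next I would start from $\CAP(A,B)=\mathscr{D}(h_{A,B})=\langle h_{A,B},-\mathscr{L}h_{A,B}\rangle_\mu$ and localize. Since $\mathscr{L}h_{A,B}\equiv 0$ on $(A\cup B)^c$, only the values of the first factor on $A\cup B$ contribute; but there $h_{A,B}$ equals $\mathbf{1}_A$, and so does $h_{A,B}^\dagger$, both being $1$ on $A$ and $0$ on $B$ by the boundary conditions for equilibrium potentials of the process and of its adjoint. Hence I may replace $h_{A,B}$ by $h_{A,B}^\dagger$ in the first slot to obtain $\CAP(A,B)=\langle h_{A,B}^\dagger,-\mathscr{L}h_{A,B}\rangle_\mu$. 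Transferring the generator to the other factor via $\langle f,\mathscr{L}g\rangle_\mu=\langle \mathscr{L}^\dagger f,g\rangle_\mu$ gives $\CAP(A,B)=\langle -\mathscr{L}^\dagger h_{A,B}^\dagger, h_{A,B}\rangle_\mu$. Running the same localization once more — now $\mathscr{L}^\dagger h_{A,B}^\dagger\equiv 0$ off $A\cup B$ and $h_{A,B}=\mathbf{1}_A=h_{A,B}^\dagger$ there — lets me swap $h_{A,B}$ for $h_{A,B}^\dagger$ in the second slot, yielding
\[
	\CAP(A,B)=\langle h_{A,B}^\dagger,-\mathscr{L}^\dagger h_{A,B}^\dagger\rangle_\mu=\mathscr{D}^\dagger(h_{A,B}^\dagger)=\mathscr{D}(h_{A,B}^\dagger)=\CAP^\dagger(A,B),
\]
which is the claim.

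The argument is pure bookkeeping, so there is no genuine analytic obstacle; the only point that needs care is the two substitution steps, where one must verify that the function being paired against ($\mathscr{L}h_{A,B}$, respectively $\mathscr{L}^\dagger h_{A,B}^\dagger$) is supported on $A\cup B$ and that $h_{A,B}$ and $h_{A,B}^\dagger$ really do agree there. Once this is spelled out, the chain of equalities closes. As an alternative route one could instead invoke the Dirichlet and dual Thomson variational principles for the non-reversible capacity, but the direct computation above is shorter and uses only the notions already introduced.
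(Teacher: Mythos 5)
Your argument is correct and complete. The paper states this proposition without proof (deferring to the cited potential-theoretic references), and your route — observe $\mathscr{D}^\dagger=\mathscr{D}$, write $\CAP(A,B)=\langle h_{A,B},-\mathscr{L}h_{A,B}\rangle_\mu$, localize to $A\cup B$ where $h_{A,B}=h_{A,B}^\dagger=\mathbf{1}_A$, transfer the generator via adjointness, and localize once more — is precisely the standard derivation used in that literature, so this matches the intended argument.
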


\begin{proposition}\label{prop:monotonicity of capacity}
	Let $A'$ and $B'$ be two nonempty disjoint subsets of $\mathcal{H}$,
		and $A$ and $B$ be nonempty subsets of $A'$ and $B'$, respectively.
	Then, we have 
	\begin{equation*}
		\CAP(A,B) \le \CAP(A',B').
	\end{equation*}
\end{proposition}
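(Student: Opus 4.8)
The plan is to mimic the standard proof of capacity monotonicity for reversible chains, where it is immediate from the Dirichlet principle $\CAP(A,B)=\inf\{\mathscr D(f):f\equiv1\text{ on }A,\ f\equiv0\text{ on }B\}$ together with the observation that enlarging $A$ and $B$ shrinks the admissible class of test functions. The complication in the non-reversible setting is that this variational characterization is no longer available: $h_{A,B}$ is $\mathscr L$-harmonic off $A\cup B$, but it does not minimize $\mathscr D$ subject to those boundary conditions. I would circumvent this by pairing the equilibrium potential of one set-pair against the \emph{adjoint} equilibrium potential of the other; this cross pairing is the only new ingredient, and once it is in place the rest is a sign check via the maximum principle.

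First I would record the elementary identity that, for \emph{any} $f:\mathcal H\to\mathbb R$ with $f\equiv1$ on $A$ and $f\equiv0$ on $B$,
\[
	\langle f,-\mathscr L h_{A,B}\rangle_\mu=\langle h_{A,B},-\mathscr L h_{A,B}\rangle_\mu=\CAP(A,B),
\]
which holds because $f-h_{A,B}$ vanishes on $A\cup B$ while $\mathscr L h_{A,B}$ vanishes on $(A\cup B)^c$, so these functions have disjoint supports and are orthogonal in $L^2(\mu)$. Since $A\subseteq A'$ and $B\subseteq B'$, the adjoint equilibrium potential $h^\dagger_{A',B'}$ equals $1$ on $A$ and $0$ on $B$, hence is admissible for the pair $(A,B)$ as well as for $(A',B')$; applying the identity to both pairs with this choice of $f$ gives $\CAP(A,B)=\langle h^\dagger_{A',B'},-\mathscr L h_{A,B}\rangle_\mu$ and $\CAP(A',B')=\langle h^\dagger_{A',B'},-\mathscr L h_{A',B'}\rangle_\mu$. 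Subtracting and moving the generator onto the adjoint side through $\langle f,\mathscr L g\rangle_\mu=\langle \mathscr L^\dagger f,g\rangle_\mu$ yields
\[
	\CAP(A',B')-\CAP(A,B)
	=\sum_{x}\mu(x)\,\bigl(-\mathscr L^\dagger h^\dagger_{A',B'}\bigr)(x)\,\bigl(h_{A',B'}-h_{A,B}\bigr)(x),
\]
where only $x\in A'\cup B'$ contribute because $h^\dagger_{A',B'}$ is $\mathscr L^\dagger$-harmonic elsewhere.

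It then remains to check that every surviving summand is nonnegative. For $x\in A'$ we have $\bigl(-\mathscr L^\dagger h^\dagger_{A',B'}\bigr)(x)=\sum_y r^\dagger(x,y)\bigl(1-h^\dagger_{A',B'}(y)\bigr)\ge0$ using $h^\dagger_{A',B'}\le1$, and $\bigl(h_{A',B'}-h_{A,B}\bigr)(x)=1-h_{A,B}(x)\ge0$ using $h_{A,B}\le1$; for $x\in B'$ both factors are instead $\le0$, using $h^\dagger_{A',B'}\ge0$ and $h_{A,B}\ge0$. Hence the sum is nonnegative, which is exactly $\CAP(A,B)\le\CAP(A',B')$, with the two enlargements $A\subseteq A'$ and $B\subseteq B'$ handled simultaneously and no iteration needed. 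The only real obstacle is conceptual: recognizing that the correct object to pair $h_{A',B'}$ and $h_{A,B}$ against is the adjoint potential $h^\dagger_{A',B'}$, since that is the pairing for which the boundary values and the maximum-principle bounds $0\le h_{A,B},\,h^\dagger_{A',B'}\le1$ conspire to make every term of one sign; after that, the argument uses only the definition $\CAP=\mathscr D(h_{A,B})$, the harmonicity of equilibrium potentials, and the adjointness identity, and in particular does not invoke Proposition~\ref{prop:time symmetry of capacity}.
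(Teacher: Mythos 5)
Your proof is correct. The paper states this proposition without a proof (referring to \cite{seo-arxiv} for details), so there is no in-paper argument to compare against; you have filled the gap with a valid, self-contained derivation.

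The key steps all check out: the identity $\langle f,-\mathscr L h_{A,B}\rangle_\mu=\CAP(A,B)$ for any $f\in\mathfrak C_{1,0}(A,B)$ holds because $f-h_{A,B}$ and $\mathscr Lh_{A,B}$ have disjoint supports; the cross-pairing with $h^\dagger_{A',B'}$ (admissible for both pairs since $A\subseteq A'$, $B\subseteq B'$) is legitimate; passing $\mathscr L$ to $\mathscr L^\dagger$ and using $\mathscr L^\dagger$-harmonicity of $h^\dagger_{A',B'}$ off $A'\cup B'$ correctly localizes the sum; and on $A'$ (resp.\ $B'$) both factors $\bigl(-\mathscr L^\dagger h^\dagger_{A',B'}\bigr)(x)$ and $\bigl(h_{A',B'}-h_{A,B}\bigr)(x)$ are nonnegative (resp.\ nonpositive) by the bounds $0\le h_{A,B},h^\dagger_{A',B'}\le1$. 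The argument handles both enlargements simultaneously without iterating over single vertices, and it uses only the definition $\CAP=\mathscr D(h_{A,B})$, harmonicity, adjointness, and the maximum principle. A more common alternative in the non-reversible literature is to deduce monotonicity from a pure min-max variational characterization of capacity (one not involving $h_{A,B}$ in the constraint), where shrinking the admissible classes as $A,B$ grow makes the inequality immediate; your route is more elementary in that it sidesteps the need for such a characterization, at the cost of the cross-pairing observation, which you rightly identify as the one non-obvious step.
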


Given a process, we can represent its mean hitting times
	in terms of capacities, equilibrium potentials, and the stationary distribution.

\begin{proposition}[Mean hitting time formula, {\cite[Equation (1.32)]{seo-arxiv}}]\label{prop:mean hit time formula}
	Let $x,y \in \mathcal{H}$ be two distinct states.
	Then, we have 
	\begin{equation*}
		\EE_x [\tau_y]
		= \frac{1}{\CAP(x,y)} \sum_{z\in\mathcal{H}} h_{x,y}^\dagger(z)\mu(z).
	\end{equation*}
\end{proposition}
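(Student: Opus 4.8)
The plan is to deduce the formula from a single application of the adjoint relation $\langle f,\mathscr{L}g\rangle_\mu=\langle\mathscr{L}^\dagger f,g\rangle_\mu$, by pairing the \emph{adjoint} equilibrium potential $h^\dagger_{x,y}$ against the mean-hitting-time function of the \emph{original} process.

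First I would assemble two elementary ingredients. Set $w(z)=\EE_z[\tau_y]$; since $\mathcal{H}$ is finite and $(X(t))_{t\ge 0}$ irreducible, $w$ is finite with $w(y)=0$, and first-step analysis (equivalently Dynkin's formula) gives $(\mathscr{L}w)(z)=-1$ for every $z\neq y$, while $(\mathscr{L}w)(y)$ is some finite number whose value will turn out to be irrelevant. Second, I would record the standard reduction of capacity to a one-point quantity: because $h_{x,y}$ is $\mathscr{L}$-harmonic on $\mathcal{H}\setminus\{x,y\}$ with $h_{x,y}(x)=1$ and $h_{x,y}(y)=0$,
\[
	\CAP(x,y)=\langle h_{x,y},-\mathscr{L}h_{x,y}\rangle_\mu=\sum_{z\in\mathcal{H}}\mu(z)h_{x,y}(z)(-\mathscr{L}h_{x,y})(z)=\mu(x)(-\mathscr{L}h_{x,y})(x),
\]
and the identical computation applied to the adjoint process gives $\CAP^\dagger(x,y)=\mu(x)(-\mathscr{L}^\dagger h^\dagger_{x,y})(x)$.

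The core step is to evaluate $\langle h^\dagger_{x,y},-\mathscr{L}w\rangle_\mu$ in two ways. On one hand, using $(-\mathscr{L}w)(z)=1$ for $z\neq y$ together with $h^\dagger_{x,y}(y)=0$ (so the unknown term at $y$ is annihilated),
\[
	\langle h^\dagger_{x,y},-\mathscr{L}w\rangle_\mu=\sum_{z\neq y}\mu(z)h^\dagger_{x,y}(z)=\sum_{z\in\mathcal{H}}\mu(z)h^\dagger_{x,y}(z).
\]
On the other hand, by the adjoint relation this equals $\langle -\mathscr{L}^\dagger h^\dagger_{x,y},w\rangle_\mu$; since $h^\dagger_{x,y}$ is $\mathscr{L}^\dagger$-harmonic off $\{x,y\}$, the function $-\mathscr{L}^\dagger h^\dagger_{x,y}$ is supported on $\{x,y\}$, and because $w(y)=0$ this collapses to $\mu(x)(-\mathscr{L}^\dagger h^\dagger_{x,y})(x)\,w(x)=\CAP^\dagger(x,y)\,\EE_x[\tau_y]$ by the previous paragraph.

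Equating the two evaluations yields $\sum_{z}\mu(z)h^\dagger_{x,y}(z)=\CAP^\dagger(x,y)\,\EE_x[\tau_y]$, and Proposition~\ref{prop:time symmetry of capacity} replaces $\CAP^\dagger(x,y)$ by $\CAP(x,y)$, giving exactly \eqref{eq:mean hit time formula} (after dividing by $\CAP(x,y)$, which is positive by irreducibility). I do not expect a genuine obstacle here: the only points needing a moment's care are the finiteness of $\EE_x[\tau_y]$, which is immediate on a finite irreducible chain, and the bookkeeping at the state $y$, where one must observe that the undetermined value of $\mathscr{L}w$ at $y$ drops out precisely because it is multiplied by $h^\dagger_{x,y}(y)=0$. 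The single idea in the argument is to pair $h^\dagger_{x,y}$ rather than $h_{x,y}$ with $-\mathscr{L}w$, so that the adjoint relation transfers the harmonicity onto the side that produces a capacity.
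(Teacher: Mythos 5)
Your proof is correct, and since the paper states this proposition without proof (referring to \cite{seo-arxiv} for the theory), there is no paper argument to compare against. Your argument is the standard one: pair the adjoint equilibrium potential $h^\dagger_{x,y}$ against the mean-hitting-time function $w(z)=\EE_z[\tau_y]$, exploit the adjoint relation to shift $\mathscr{L}$ to $\mathscr{L}^\dagger$ so that harmonicity of $h^\dagger_{x,y}$ off $\{x,y\}$ collapses the sum to a single term, reduce the one-point divergence to $\CAP^\dagger(x,y)$ via $\CAP^\dagger(x,y)=\mu(x)(-\mathscr{L}^\dagger h^\dagger_{x,y})(x)$, and invoke Proposition~\ref{prop:time symmetry of capacity} to conclude. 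The bookkeeping is right: the indeterminate value of $\mathscr{L}w$ at $y$ is killed by $h^\dagger_{x,y}(y)=0$, and the term at $y$ on the other side is killed by $w(y)=0$, so no extra hypotheses are needed.
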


In general, it is difficult to compute the equilibrium potential $h_{A,B}$ accurately.
Hence, the following rough estimate for $h_{A,B}$ provides a useful bound.
See \cite[Proposition 1.16]{seo-arxiv} for the proof.
\begin{proposition}\label{prop:equilibrium potential estimate}
	Let $A$ and $B$ be two nonempty disjoint subsets of $\mathcal{H}$.
	Then, we have 
	\begin{equation*}
		1 - \frac{\CAP(x,B)}{\CAP(x,A\cup B)}
		\le h_{A,B}(x)
		\le \frac{\CAP(x,A)}{\CAP(x,A\cup B)}
		\text{ for all } x\in (A\cup B)^c.
	\end{equation*}
\end{proposition}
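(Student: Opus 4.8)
The plan is to interpret capacity as an escape probability and then analyse $h_{A,B}(x)$ through a renewal decomposition of the trajectory into excursions away from the starting point $x$.

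First I would record that for every nonempty $C\subseteq\mathcal{H}$ and every $x\notin C$ one has
\[
	\CAP(x,C)=\mu(x)\,\rho(x)\,\PP_x[\tau_C<\tau_x^+],
\]
where $\rho(x)=\sum_{y\in\mathcal{H}}r(x,y)$ is the total jump rate out of $x$ and $\tau_x^+$ is the first return time to $x$. This follows directly from the definition of the Dirichlet form: since $h_{x,C}$ is $\mathscr{L}$-harmonic on $(\{x\}\cup C)^c$ and vanishes on $C$, the sum $\CAP(x,C)=\langle h_{x,C},-\mathscr{L}h_{x,C}\rangle_\mu=\sum_{z}\mu(z)h_{x,C}(z)(-\mathscr{L}h_{x,C})(z)$ collapses to the single term $\mu(x)\,(-\mathscr{L}h_{x,C})(x)=\mu(x)\sum_{y}r(x,y)\,\PP_y[\tau_C<\tau_x]$, and conditioning on the first jump out of $x$ rewrites this as $\mu(x)\rho(x)\PP_x[\tau_C<\tau_x^+]$. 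Only harmonicity for the forward generator $\mathscr{L}$ is invoked, so non-reversibility is no obstacle. Applying this with $C=A$, $C=B$ and $C=A\cup B$ and cancelling the common factor $\mu(x)\rho(x)$, which is positive by irreducibility, the proposition reduces to
\[
	1-\frac{\PP_x[\tau_B<\tau_x^+]}{\PP_x[\tau_{A\cup B}<\tau_x^+]}\le h_{A,B}(x)\le\frac{\PP_x[\tau_A<\tau_x^+]}{\PP_x[\tau_{A\cup B}<\tau_x^+]}.
\]

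Next I would decompose the trajectory from $x$ into successive excursions away from $x$, which are i.i.d.\ by the strong Markov property. For one excursion set $q=\PP_x[\tau_{A\cup B}<\tau_x^+]$ and let $a$ (respectively $b$) be the probability that the excursion reaches $A$ before $B$ and before returning to $x$ (respectively $B$ before $A$ and before returning); since $A\cap B=\emptyset$ and the process occupies a single site at time $\tau_{A\cup B}$, we have $a+b=q$. Summing over which excursion is the first to reach $A\cup B$ gives the geometric series $h_{A,B}(x)=\PP_x[\tau_A<\tau_B]=\sum_{k\ge1}(1-q)^{k-1}a=a/q$. Because the event ``reach $A$ before $B$ and before $\tau_x^+$'' is contained in ``reach $A$ before $\tau_x^+$'', we have $a\le\PP_x[\tau_A<\tau_x^+]$, hence $h_{A,B}(x)=a/q\le\PP_x[\tau_A<\tau_x^+]/\PP_x[\tau_{A\cup B}<\tau_x^+]$, which is the upper bound. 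The lower bound follows by applying the upper bound with the roles of $A$ and $B$ interchanged and using $h_{A,B}=1-h_{B,A}$ together with $\CAP(A,B)=\CAP(B,A)$.

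The argument is essentially routine; the two steps that need a little care are the collapse of the Dirichlet form to the single term $\mu(x)(-\mathscr{L}h_{x,C})(x)$ --- one must check that the supports of $h_{x,C}$ and of $\mathscr{L}h_{x,C}$ overlap only at $x$ --- and the justification of the excursion decomposition in continuous time, for which finiteness of $\mathcal{H}$ and irreducibility are enough. An alternative, purely variational proof would bound $h_{A,B}$ through the Dirichlet and Thomson principles for the symmetrized process $(X^s(t))_{t\ge0}$, but the renewal argument above is shorter and treats the non-reversible case directly.
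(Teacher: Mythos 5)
The paper does not prove this proposition---it is stated as a standard fact from the potential-theoretic literature---so there is nothing in the source to compare against directly. Your argument is correct and is the classical escape-probability/renewal proof of the estimate. The identity $\CAP(x,C)=\mu(x)\rho(x)\PP_x[\tau_C<\tau_x^+]$ is valid in the non-reversible setting exactly for the reason you note: the Dirichlet form $\langle h_{x,C},-\mathscr{L}h_{x,C}\rangle_\mu$ collapses to the single term $\mu(x)(-\mathscr{L}h_{x,C})(x)$ because $h_{x,C}$ vanishes on $C$ while $\mathscr{L}h_{x,C}$ vanishes off $\{x\}\cup C$, and both of these concern only the forward generator; the first-jump decomposition then converts $(-\mathscr{L}h_{x,C})(x)$ into $\rho(x)\PP_x[\tau_C<\tau_x^+]$. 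The excursion decomposition at $x$ gives $h_{A,B}(x)=a/q$ with $a+b=q$ (disjointness of $A$ and $B$ guarantees $\tau_A\neq\tau_B$ almost surely on the event $\tau_{A\cup B}<\tau_x^+$), and the two inequalities follow by forgetting the restriction $\tau_A<\tau_B$, respectively $\tau_B<\tau_A$, inside $a$ and $b$. This is clean, fully rigorous in continuous time on a finite irreducible state space, and gives a self-contained proof of a statement the paper itself leaves as a citation.
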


Next, we introduce the flow structure associated with the Markov process.
For two sites $x$ and $y$ in $\mathcal{H}$,
	we write $x \sim y$ if $r(x,y) + r(y,x) > 0$.
Note that $x \sim y$ if and only if $y \sim x$.
Define the set of directed edges by
\begin{equation*}
	\mathfrak{E}
	= \{(x,y) \in \mathcal{H}\times\mathcal{H} : x \sim y\}.
\end{equation*}
A \emph{flow} on $\mathcal{H}$ is a function $\phi: \mathfrak{E} \to \RR$ that is anti-symmetric,
in the sense that
\[
	\phi(x,y) = - \phi(y,x)
	\text{ for all }(x,y) \in \mathfrak{E}.
\]
We denote the space of flows by $\mathfrak{F}$.
Define the \emph{conductance} between two sites $x$ and $y$ as
\begin{equation*}
	c(x,y) = \mu(x)r(x,y),\qquad
	x,y\in\mathcal{H},
\end{equation*}
and consider the symmetrized conductance
\begin{equation*}
	c^s(x,y)
	= \frac{1}{2}[c(x,y) + c(y,x)],\qquad
	x,y\in \mathcal{H},
\end{equation*}
satisfying $c^s(x,y) = c^s(y,x)$.
Then, we define an $L^2$-structure on the flow space by
\begin{equation*}
	\langle \phi,\psi \rangle_\mathfrak{F}
	= \frac{1}{2} \sum_{(x,y)\in\mathfrak{E}} \frac{\phi(x,y)\psi(x,y)}{c^s(x,y)},\qquad
	\phi,\psi\in\mathfrak{F}.
\end{equation*}
The flow norm is defined as $\|\phi\|_\mathfrak{F} = \langle\phi,\phi\rangle_\mathfrak{F}^{1/2}$.

For a flow $\phi$, the \emph{divergence} of $\phi$ at a site $x\in\mathcal{H}$ is defined by
\begin{equation*}
	(\DIV \phi)(x)
	= \sum_{y: x\sim y} \phi(x,y).
\end{equation*}
For $A \subseteq \mathcal{H}$, define
\begin{equation*}
	(\DIV \phi)(A)
	= \sum_{x \in A} (\DIV \phi)(x).
\end{equation*}
The flow $\phi$ is said to be \emph{divergence-free} at $x \in \mathcal{H}$ if $(\DIV\phi)(x) = 0$,
and divergence-free on $A \subseteq \mathcal{H}$ if it is divergence-free at all $x\in A$.

Given a function $f: \mathcal{H} \to \RR$, we define three associated flows as follows:
\begin{equation*}
\begin{split}
	\Phi_f(x,y) &= f(y)c(y,x) - f(x)c(x,y), \\
	\Phi_f^*(x,y) &= f(y)c(x,y) - f(x)c(y,x), \\
	\Psi_f(x,y) &= c^s(x,y)[f(y)-f(x)] = (1/2)(\Phi_f + \Phi_f^*)(x,y).
\end{split}
\end{equation*}
Then, it holds that 
\begin{equation}\label{eq:div of associated flows}
	(\DIV \Phi_f)(x) = \mu(x) (\mathscr{L}^\dagger f)(x) \quad\text{and}\quad 
	(\DIV \Phi_f^*)(x) = \mu(x) (\mathscr{L} f)(x)
\end{equation}
for all $x\in\mathcal{H}$.

\subsection{Dirichlet and Thomson Principles}
Variational principles are useful tools for estimating the capacity of a process.
In this subsection, we introduce two variational principles:
	the Dirichlet principle and the Thomson principle,
	which provide upper and lower bounds for the capacity, respectively.
For a deeper discussion of the principles, 
	we refer the reader to \cite[Theorem 3.2]{seo-arxiv}.

For nonempty and disjoint subsets $A$ and $B$ of $\mathcal{H}$, and real numbers $a$ and $b$,
let $\mathfrak{C}_{a,b}(A,B)$ be the set of all real-valued functions $f$ on $\mathcal{H}$
such that $f\vert_A \equiv a$ and $f\vert_B \equiv b$.

\begin{theorem}[Dirichlet principle]\label{thm:dirichlet}
	Let $(X(t))_{t\ge 0}$ be a continuous-time Markov process on a finite set $\mathcal{H}$,
	and $A, B \subseteq \mathcal{H}$ be nonempty and disjoint.
	Then, we have 
	\begin{equation*}
		\CAP(A,B)
		= \inf_{f\in\mathfrak{C}_{1,0}(A,B), \phi\in\mathfrak{F}}
		\biggl\{ \|\Phi_f - \phi \|^2 - 2\sum_{x\in\mathcal{H}} h_{A,B}(x) (\DIV\phi)(x) \biggr\},
	\end{equation*}
	and
	\begin{equation*}
		(f,\phi)
		= \Bigl(\frac{1}{2}(h_{A,B} + h_{A,B}^\dagger),
			\frac{1}{2}(\Phi_{h_{A,B}^\dagger} - \Phi_{h_{A,B}}^*)\Bigr)
	\end{equation*}
	is the unique minimizer.
\end{theorem}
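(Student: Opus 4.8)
The plan is to rewrite both sides of the identity in terms of the flow $\Psi_h$ attached to the equilibrium potential $h:=h_{A,B}$, and then to expand the functional around the displayed pair so that all cross terms cancel, leaving a nonnegative square. Two elementary facts are used throughout. First, $\mathscr{D}(f)=\|\Psi_f\|^2$ for every $f\colon\mathcal{H}\to\RR$ (hence $\CAP(A,B)=\|\Psi_h\|^2$), which is immediate from the definitions of $\Psi_f$ and of the symmetrized conductance. Second, the summation-by-parts identity
\[
	\sum_{x\in\mathcal{H}} f(x)(\DIV\phi)(x) = -\langle\Psi_f,\phi\rangle_{\mathfrak{F}}
	\qquad\text{valid for all } f\colon\mathcal{H}\to\RR,\ \phi\in\mathfrak{F},
\]
which follows at once from anti-symmetry of $\phi$. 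I would also record, for $(f_0,\phi_0):=\bigl(\tfrac12(h+h^\dagger),\,\tfrac12(\Phi_{h^\dagger}-\Phi_h^*)\bigr)$, that $\Psi_h=\tfrac12(\Phi_h+\Phi_h^*)$ together with linearity of $f\mapsto\Phi_f$ gives $\Phi_{f_0}-\phi_0=\Psi_h$; and that by \eqref{eq:div of associated flows} one has $(\DIV\Phi_h^*)(A)=-\CAP(A,B)$ and $(\DIV\Phi_{h^\dagger})(A)=-\CAP^\dagger(A,B)$, so Proposition~\ref{prop:time symmetry of capacity} yields $(\DIV\phi_0)(A)=0$, while $(\DIV\phi_0)(x)=\tfrac12\mu(x)\bigl[(\mathscr{L}^\dagger h^\dagger)(x)-(\mathscr{L}h)(x)\bigr]$ vanishes on $(A\cup B)^c$.

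\emph{Upper bound.} Evaluating the functional at $(f_0,\phi_0)$, the quadratic term is $\|\Phi_{f_0}-\phi_0\|^2=\|\Psi_h\|^2=\CAP(A,B)$, and since $h\equiv 1$ on $A$, $h\equiv 0$ on $B$, and $\DIV\phi_0$ vanishes off $A\cup B$, the correction term equals $-2(\DIV\phi_0)(A)=0$. Thus the infimum is at most $\CAP(A,B)$.

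\emph{Lower bound and minimizers.} For an arbitrary admissible pair, write $f=f_0+g$ with $g\in\mathfrak{C}_{0,0}(A,B)$ (forced, since $f_0$ already agrees with $f$ on $A\cup B$) and $\phi=\phi_0+\psi$ with $\psi\in\mathfrak{F}$. Expanding and using $\Phi_{f_0}-\phi_0=\Psi_h$,
\[
	\|\Phi_f-\phi\|^2 = \|\Psi_h\|^2 + \|\Phi_g-\psi\|^2 + 2\langle\Psi_h,\Phi_g\rangle - 2\langle\Psi_h,\psi\rangle .
\]
The term $\langle\Psi_h,\Phi_g\rangle$ vanishes: by the identity above it equals $-\langle\mathscr{L}h,g\rangle_\mu$, which is $0$ because $\mathscr{L}h$ is supported on $A\cup B$ while $g$ vanishes there. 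For the correction term, $\sum_x h(x)(\DIV\phi)(x) = (\DIV\phi_0)(A)+\sum_x h(x)(\DIV\psi)(x) = -\langle\Psi_h,\psi\rangle$. Collecting everything, the functional equals $\CAP(A,B)+\|\Phi_g-\psi\|^2\ge\CAP(A,B)$, with equality precisely when $\psi=\Phi_g$. This proves the variational formula; the choice $g=0$, $\psi=0$ shows that $(f_0,\phi_0)$ is a minimizer, and tracking the equality condition identifies the minimizers as the pairs $(f_0+g,\phi_0+\Phi_g)$, among which $(f_0,\phi_0)$ is distinguished as the one whose flow is divergence-free on $(A\cup B)^c$ (for $g\not\equiv 0$ one has $(\DIV\Phi_g)(x)=\mu(x)(\mathscr{L}^\dagger g)(x)$ not identically zero there, since a function $\mathscr{L}^\dagger$-harmonic off $A\cup B$ with zero boundary values vanishes identically by irreducibility).

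The only genuinely delicate point is the bookkeeping of the correction term $-2\sum_x h(x)(\DIV\phi)(x)$: one must see that it exactly cancels the stray cross term $-2\langle\Psi_h,\psi\rangle$ produced by $\|\Phi_f-\phi\|^2$ — which is precisely what the summation-by-parts identity supplies — and that it vanishes at the candidate, which is where the time-reversal symmetry of the capacity (Proposition~\ref{prop:time symmetry of capacity}) enters, through $(\DIV\phi_0)(A)=0$. Everything else, namely $\mathscr{D}(f)=\|\Psi_f\|^2$, the orthogonality $\langle\mathscr{L}h,g\rangle_\mu=0$ for $g$ supported off $A\cup B$, and the algebra $\Phi_{f_0}-\phi_0=\Psi_h$, is routine.
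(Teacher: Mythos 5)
The paper itself does not prove this theorem; it is imported as background from \cite{seo-arxiv} (cf.\ also \cite{gaudilliere, slowik}), so there is no in-paper argument to compare against. Your proof is correct and is precisely the standard complete-the-square argument for these non-reversible Dirichlet principles: the summation-by-parts identity $\sum_x f(x)(\DIV\phi)(x) = -\langle\Psi_f,\phi\rangle_{\mathfrak{F}}$, the algebraic identity $\Phi_{f_0}-\phi_0=\Psi_h$, and the use of Proposition~\ref{prop:time symmetry of capacity} to force $(\DIV\phi_0)(A)=0$ via $(\DIV\Phi_h^*)(A)=-\CAP(A,B)$, $(\DIV\Phi_{h^\dagger})(A)=-\CAP^\dagger(A,B)$, are exactly the right ingredients, and the expansion $J(f_0+g,\phi_0+\psi)=\CAP(A,B)+\|\Phi_g-\psi\|^2$ is handled cleanly, including the cancellation between the cross term from the norm and the correction term.

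One point to flag: your own computation shows that the minimizing set is the gauge orbit $\{(f_0+g,\;\phi_0+\Phi_g): g\in\mathfrak{C}_{0,0}(A,B)\}$, so the displayed pair is \emph{not} literally the unique minimizer of the functional as written. Your closing observation --- that $(f_0,\phi_0)$ is the unique member of this orbit whose flow is divergence-free on $(A\cup B)^c$, by uniqueness of the adjoint Dirichlet problem and irreducibility --- is accurate, but that condition does not appear as a constraint in the infimum, so the theorem's ``unique minimizer'' is, strictly read, too strong. This is an imprecision in the statement you inherited rather than a gap in your argument; it would be cleaner to say outright that the minimizer is unique among pairs with $\phi$ divergence-free off $A\cup B$ (or, equivalently, unique up to the gauge $g\mapsto(g,\Phi_g)$), rather than leaving ``distinguished'' to do the work that ``unique'' cannot.
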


\begin{theorem}[Thomson principle]\label{thm:thomson}
	Let $(X(t))_{t\ge 0}$ be a continuous-time Markov process on a finite set $\mathcal{H}$,
	and $A, B \subseteq \mathcal{H}$ be nonempty and disjoint.
	Then, we have 
	\begin{equation*}
		\CAP(A,B)
		= \sup_{g\in\mathfrak{C}_{0,0}(A,B), \psi\in\mathfrak{F}\setminus\{0\}}
			\frac{1}{\|\Phi_g - \psi\|^2}
			\biggl[ \sum_{x\in\mathcal{H}} h_{A,B}(x)(\DIV\psi)(x) \biggr]^2,
	\end{equation*}
	and constant multiples of
	\begin{equation*}
		(g,\psi)
		= \Bigl(\frac{1}{2\CAP(A,B)}(h_{A,B}^\dagger - h_{A,B}),
			\frac{1}{2\CAP(A,B)}(\Phi_{h_{A,B}^\dagger} + \Phi_{h_{A,B}}^*)\Bigr)
	\end{equation*}
	are maximizers.
\end{theorem}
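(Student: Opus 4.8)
The plan is to obtain the identity from a single Cauchy--Schwarz inequality in the flow space $\mathfrak{F}$, using the flow $\Psi_{h_{A,B}}$ attached to the equilibrium potential as the optimal test direction. Write $h = h_{A,B}$ and $h^\dagger = h_{A,B}^\dagger$. The proof rests on two elementary identities, valid for every function $f\colon\mathcal{H}\to\RR$ and every flow $\phi\in\mathfrak{F}$:
\begin{equation*}
	\langle \Psi_f, \phi\rangle_\mathfrak{F}
	= -\sum_{x\in\mathcal{H}} f(x)(\DIV\phi)(x)
	\qquad\text{and}\qquad
	\|\Psi_f\|_\mathfrak{F}^2 = \mathscr{D}(f).
\end{equation*}
The first follows by writing $\sum_x f(x)(\DIV\phi)(x) = \sum_{(x,y)\in\mathfrak{E}} f(x)\phi(x,y)$, using $\phi(x,y)=-\phi(y,x)$ to rewrite this as $\tfrac12\sum_{(x,y)\in\mathfrak{E}}[f(x)-f(y)]\phi(x,y)$, and recalling $\Psi_f(x,y) = c^s(x,y)[f(y)-f(x)]$; the second follows by symmetrizing the conductance inside the definition of $\mathscr{D}$. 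In particular $\|\Psi_h\|_\mathfrak{F}^2 = \mathscr{D}(h) = \CAP(A,B)$.

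First I would record the orthogonality $\langle \Psi_h, \Phi_g\rangle_\mathfrak{F} = 0$ for every $g\in\mathfrak{C}_{0,0}(A,B)$. Indeed, by the first identity above, \eqref{eq:div of associated flows}, and the adjointness of $\mathscr{L}$ and $\mathscr{L}^\dagger$,
\begin{equation*}
	\langle \Psi_h, \Phi_g\rangle_\mathfrak{F}
	= -\sum_{x} h(x)(\DIV\Phi_g)(x)
	= -\sum_{x} h(x)\mu(x)(\mathscr{L}^\dagger g)(x)
	= -\langle h, \mathscr{L}^\dagger g\rangle_\mu
	= -\langle \mathscr{L}h, g\rangle_\mu,
\end{equation*}
which vanishes because $g\equiv 0$ on $A\cup B$ while $\mathscr{L}h\equiv 0$ on $(A\cup B)^c$. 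Hence, for any admissible pair $(g,\psi)$,
\begin{equation*}
	\langle \Psi_h,\, \Phi_g - \psi\rangle_\mathfrak{F}
	= -\langle \Psi_h, \psi\rangle_\mathfrak{F}
	= \sum_{x} h(x)(\DIV\psi)(x),
\end{equation*}
so Cauchy--Schwarz together with $\|\Psi_h\|_\mathfrak{F}^2 = \CAP(A,B)$ gives
\begin{equation*}
	\Bigl[\sum_{x} h(x)(\DIV\psi)(x)\Bigr]^2
	\le \|\Psi_h\|_\mathfrak{F}^2\,\|\Phi_g-\psi\|_\mathfrak{F}^2
	= \CAP(A,B)\,\|\Phi_g-\psi\|_\mathfrak{F}^2,
\end{equation*}
i.e.\ the supremum in the statement is at most $\CAP(A,B)$.

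It then remains to exhibit a maximizer and identify it with the displayed one. Cauchy--Schwarz is an equality precisely when $\Phi_g-\psi$ is proportional to $\Psi_h$, and for the displayed pair $(g,\psi) = \tfrac{1}{2\CAP(A,B)}\bigl(h^\dagger - h,\ \Phi_{h^\dagger} + \Phi_h^*\bigr)$ one computes, using linearity of $f\mapsto\Phi_f$ and $\Psi_f = \tfrac12(\Phi_f+\Phi_f^*)$,
\begin{equation*}
	\Phi_g - \psi
	= \frac{1}{2\CAP(A,B)}\bigl(\Phi_{h^\dagger} - \Phi_h\bigr)
	- \frac{1}{2\CAP(A,B)}\bigl(\Phi_{h^\dagger} + \Phi_h^*\bigr)
	= -\frac{1}{2\CAP(A,B)}(\Phi_h + \Phi_h^*)
	= -\frac{1}{\CAP(A,B)}\,\Psi_h;
\end{equation*}
moreover $g\in\mathfrak{C}_{0,0}(A,B)$, since $h$ and $h^\dagger$ share the same boundary values on $A\cup B$. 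Feeding $\Phi_g-\psi = -\CAP(A,B)^{-1}\Psi_h$ back through the two displays of the previous paragraph gives $\sum_x h(x)(\DIV\psi)(x) = -1$ (so in particular $\DIV\psi\not\equiv 0$ and $\psi\neq 0$) and $\|\Phi_g-\psi\|_\mathfrak{F}^2 = \CAP(A,B)^{-1}$, whence the functional equals $\CAP(A,B)$ at this pair; and replacing $(g,\psi)$ by $(cg,c\psi)$ scales numerator and denominator by $c^2$, so every constant multiple is again a maximizer.

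The step I expect to require the most care is the bookkeeping of the two flow-space identities together with the orthogonality $\langle\Psi_h,\Phi_g\rangle_\mathfrak{F}=0$: it is precisely the harmonicity of $h_{A,B}$ outside $A\cup B$, combined with the boundary condition on $g$, that decouples $\Phi_g$ from the test direction and lets a single Cauchy--Schwarz carry the entire argument. (Alternatively one could deduce the statement by Legendre--Fenchel duality from the Dirichlet principle, Theorem \ref{thm:dirichlet}, but the direct route above is cleaner.)
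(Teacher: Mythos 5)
Your proof is correct. Since the paper states Theorem \ref{thm:thomson} without proof, citing the literature on non-reversible variational principles (e.g.\ \cite{gaudilliere,landim,slowik,seo-arxiv}) for the details, the relevant comparison is with the standard argument in those references — and your route is exactly that argument: express the Thomson functional as an inner product in flow space, use the harmonicity of $h_{A,B}$ on $(A\cup B)^c$ together with the vanishing of $g$ on $A\cup B$ to get the orthogonality $\langle\Psi_{h_{A,B}},\Phi_g\rangle_\mathfrak{F}=0$, and conclude by a single Cauchy--Schwarz inequality against the fixed direction $\Psi_{h_{A,B}}$ with $\|\Psi_{h_{A,B}}\|_\mathfrak{F}^2=\mathscr{D}(h_{A,B})=\CAP(A,B)$. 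The two preliminary flow-space identities you isolate are correct (the sign bookkeeping in $\langle\Psi_f,\phi\rangle_\mathfrak{F}=-\sum_x f(x)(\DIV\phi)(x)$ checks out against the paper's conventions $\Psi_f(x,y)=c^s(x,y)[f(y)-f(x)]$ and $(\DIV\phi)(x)=\sum_{y\sim x}\phi(x,y)$), the verification that the displayed pair collapses to $\Phi_g-\psi=-\CAP(A,B)^{-1}\Psi_{h_{A,B}}$ is clean, and you correctly observe that only existence (not uniqueness) of maximizers is asserted, so checking that the displayed pair and its scalar multiples attain the bound suffices. The one implicit point worth flagging is that when $\Phi_g-\psi=0$ with $\psi\neq 0$ the quotient is formally $0/0$; your orthogonality computation shows the numerator also vanishes there, so the quantity is naturally interpreted as $0$ and the supremum is unaffected.
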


\begin{remark}
	Both principles involve the expression
	\[
		\sum_{x\in\mathcal{H}} h_{A,B}(x)(\DIV \phi)(x),
	\]
	which admits the decomposition 
	\[
		(\DIV\phi)(A) + \sum_{x\in (A\cup B)^c} h_{A,B}(x)(\DIV \phi)(x).
	\]
	Note that if we select the test function and flow as the corresponding minimizer or maximizer,
		then the second term vanishes.
	Moreover, the first term equals $0$ for the Dirichlet principle
		and equals $1$ for the Thomson principle.
	In practice, to construct an effective test function and flow,
		one typically partitions $(A \cup B)^c$ into two subsets, 
		$\mathcal{C}_1$ and $\mathcal{C}_2$, such that
		the test flow is approximately divergence-free on $\mathcal{C}_1$,
		while the function $h_{A,B}$ is small on $\mathcal{C}_2$.
\end{remark}

\subsection{Trace Processes}\label{subsection:trace process}
In this subsection, we briefly introduce the notion of the trace process.
For a deeper treatment of this theory, we refer the reader to \cite{beltran-12}.

Let $F$ be a proper subset of $\mathcal{H}$.
The \emph{trace process} of $(X(t))_{t\ge 0}$ on $F$ is
	defined as the process obtained by ignoring the time spent by $(X(t))_{t\ge 0}$ outside the set $F$.
More precisely,
	define $(\mathcal{T}_t)_{t\ge 0}$ to be the time that 
	$(X(t))_{t\ge 0}$ spends in the set $F$ during the time interval $[0,t]$,
	and let $(\mathcal{S}_t)_{t\ge 0}$ be the generalized inverse of $(\mathcal{T}_t)_{t\ge 0}$.
Then, the trace process $(X^F(t))_{t\ge 0}$ is given by $X^F(t) = X(\mathcal{S}_t)$,
which is well-defined and almost surely takes values in $F$.

The stationary distribution of the trace process is the restriction of $\mu$ to the set $F$, that is,
\begin{equation*}
	\mu_F = \frac{1}{\mu(F)}\mu\vert_F.
\end{equation*}
Denoting the capacity with respect to the trace process by $\CAP_F$, we have the relation 
\begin{equation}\label{eq:capacity for trace process}
	\mu(F) \CAP_F(A,B) = \CAP(A,B)
\end{equation}
for all nonempty disjoint subsets $A$ and $B$ of $F$.

\section{Proof of the Eyring--Kramers Law}\label{section:proof}
In this section, we prove the Eyring--Kramers law for
	the mean extinction time of the contact process on star graphs
	by estimating the capacity associated with the process.

Following the approach similar to those found in \cite{cator} and \cite{chatterjee},
	we consider the trace process of the regenerative process $(o_t,n_t)_{t\ge 0}$, 
	restricted to the set
\begin{equation*}
	F = \{(0,0)\} \cup \{(1,n): 0\le n\le N\}.
\end{equation*}
In other words, we disregard the time spent by the process when the hub is healthy,
	except when the process is at the stable state $(0,0)$.
It turns out that the jump rates of the trace process can be explicitly determined.
When the hub is healthy and there are $n > 0$ infected leaves,
	the subsequent transition of the contact process is
	either to the state $(1,n)$ with probability $\frac{\lambda}{1+\lambda}$
	or to the state $(0,n-1)$ with probability $\frac{1}{1+\lambda}$.
Hence, once the hub becomes healthy,
	the number $H$ of leaves that recover before the hub is reinfected follows 
	a truncated geometric distribution:
\[
	\PP[H = j]
	= \begin{cases} \smallskip
		\frac{\lambda}{(1+\lambda)^{j+1}}	&	\text{for } 0\le j\le n-1, \\ \smallskip
		\frac{1}{(1+\lambda)^{n}}			&	\text{for } j = n.
	\end{cases}
\]
Consequently, the jump rate $r_F(x,y)$ of the trace process on $F$ is given by
\begin{equation*}
	\begin{cases} \smallskip
		r_F((1,n),(1,n+1)) = \lambda(N-n)
			& \text{for } 0\le n\le N, \\ \smallskip
		r_F((1,n),(1,n-1)) = n + \frac{\lambda}{(1+\lambda)^2}
			& \text{for } 1\le n\le N, \\ \smallskip
		r_F((1,n),(1,n-j)) = \frac{\lambda}{(1+\lambda)^{j+1}}
			& \text{for } 1\le n\le N \text{ and } 2 \le j \le n-1, \\ \smallskip
		r_F((1,n),(0,0)) = \frac{1}{(1+\lambda)^n}
			& \text{for } 0\le n\le N, \\ \smallskip
		r_F((0,0),(1,0)) = \alpha.
	\end{cases}
\end{equation*}

Lemma \ref{lem:sum of scaled vn} follows immediately from the definition of the trace process.
\begin{proof}[Proof of Lemma \ref{lem:sum of scaled vn}]
	By the stationarity condition at the state $(0,0)$ for the trace process on the set $F$, we have 
	\[
		\sum_{n=0}^{N} \frac{\mu(1,n)}{(1+\lambda)^n} = \alpha \mu(0,0).
	\]
	Thus, the lemma follows directly from \eqref{eq:u0 asymptotic}.
\end{proof}

Recall from Lemma \ref{lem:total mass} that 
	the mass of the set $F$ converges to $1$ as $N \to \infty$.
For simplicity, throughout this section, 
	we treat the quasi-stationary distribution $\nu$ 
	as if it were the stationary distribution of the trace process.
It is also important to note that 
	a sharp estimate for the capacity of the trace process on $F$ 
	provides a corresponding estimate for the original process.

\subsection{Capacity Estimate}
In this subsection, we apply variational principles
	to derive a sharp estimate for the capacity of the process.

We consider the capacity between the all-healthy state $(0,0)$ 
	and a state $x=(1,n)$ with an infected hub.
From a potential theoretic viewpoint, 
	we can crudely estimate the capacity between two states 
	(on an exponential scale)
	as the ratio of the minimum to the maximum stationary measure 
	evaluated along the most probable path connecting these two states.
In our context, 
	this estimate corresponds to 
	the quasi-stationary measure at the metastable state 
	divided by the maximum of the quasi-stationary measures 
	at the states $(0,0)$ and $(1,n)$.
This is because the most probable path from $(0,0)$ to $(1,n)$ 
	necessarily passes through the metastable state.
Consequently, the capacity between the states $(0,0)$ and $(1,n)$ is approximated, 
	on an exponential scale, 
	by $\mu(1,\lfloor\frac{\lambda}{1+\lambda}N\rfloor)/\mu(0,0)$, 
	provided that $\mu(1,n)$ remains negligible compared to $\mu(0,0)$.

By Theorems \ref{thm:stationary asymptotic high} and \ref{thm:stationary asymptotic low}, 
	together with Stirling's formula \cite[Equation (5.11.1)]{nist}, 
	we obtain, 
	for $\frac{\lambda}{1+2\lambda} < t \le 1$, 
	the asymptotic relation 
\[
	\frac{1}{N}\log \frac{\mu(0,0)}{\mu(1,\lfloor tN\rfloor)}
	= s(\lambda,t)+o(1),
\]
where 
\[
	s(\lambda,t)
	= \log \frac{1+2\lambda}{\lambda(1+\lambda)}
		+ t\log t + (1-t)\log (1-t) + (1-t)\log\lambda.
\]
We adopt the convention $0\log 0 = 0$.
Note that $s(\varphi,1)=0$, where $\varphi = \frac{1+\sqrt{5}}{2}$. 
It can be readily verified that if $\lambda \le \varphi$, 
	then there exists a unique $\widetilde{w}$ satisfying 
	$\frac{\lambda}{1+2\lambda}<\widetilde{w}\le 1$ and $s(\lambda,\widetilde{w})=0$.
Moreover, this $\widetilde{w}$ is greater than $\frac{\lambda}{1+\lambda}$.
We define a value $w$ with 
	$\frac{\lambda}{1+\lambda}<w<\widetilde{w}$, 
	chosen sufficiently close to $\widetilde{w}$ so that 
	$s(\lambda,w) > - 2 w \log(1+\lambda)$
	when $\lambda \le \varphi$. 
For $\lambda > \varphi$, we set $w = 1$.

\begin{theorem}[Capacity estimate]\label{thm:capacity estimate}
	Let $\varepsilon > 0$ be given.
	Let $w$ be as defined above, and let $W = \lfloor wN \rfloor$.
	Then, for $n \in [\varepsilon N, W]$,
		we have
	\begin{equation}\label{eq:capacity estimate}
	\begin{split}
		\CAP((0,0),(1,n))
		&= (1+2\lambda)
			\Bigl(\frac{\lambda}{1+\lambda}\Bigr)^{\frac{2}{1+2\lambda}}
			B\Bigl(\tfrac{1}{1+2\lambda},N\Bigr)^{-1}
			\Bigl(\frac{1+2\lambda}{(1+\lambda)^2}\Bigr)^{N}
			(1+o(1))
	\end{split}
	\end{equation}
	as $N\to\infty$, where the error term $o(1)$ is uniform in $n$.
\end{theorem}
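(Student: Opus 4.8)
The plan is to estimate $\CAP((0,0),(1,n))$ for $n \in [\varepsilon N, N]$ by applying the Dirichlet and Thomson principles (Theorems~\ref{thm:dirichlet} and~\ref{thm:thomson}) to the trace process on $F$, using test functions and test flows built from the explicit asymptotics of the quasi-stationary measure obtained in Section~\ref{section:energy landscape}. The first observation is that by Proposition~\ref{prop:monotonicity of capacity}, $\CAP((0,0),(1,n))$ is monotone in $n$, and the heuristic is that the bottleneck between the metastable well near $n \simeq \frac{\lambda}{1+\lambda}N$ and the absorbing state $(0,0)$ sits in the low-infection regime, so the value of the capacity should not depend on $n$ to leading order as long as $n \ge \varepsilon N$; this explains why the right-hand side of \eqref{eq:capacity estimate} is independent of $n$. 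Concretely, I would first prove the estimate for one convenient value, say $n = m = \lfloor\frac{\lambda}{1+\lambda}N\rfloor$, and then sandwich the general case using monotonicity together with a matching lower bound for $n = N$ and upper bound for $n = \varepsilon N$.

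For the \textbf{upper bound} I would use the Dirichlet principle with the test pair $(f,\phi)$ chosen to mimic the true minimizer $\bigl(\tfrac12(h+h^\dagger), \tfrac12(\Phi_{h^\dagger} - \Phi_h^*)\bigr)$. The natural guess for $f$ is a function that is $1$ at $(0,0)$, $0$ at $(1,n)$, and interpolates through the low-infection states roughly like the ``harmonic coordinate'' of the birth–death part of the chain — i.e. proportional to a partial sum of $(1+\lambda)^{-k}$ or, more precisely, of $v_k/(1+\lambda)^k$-type weights suggested by Theorem~\ref{thm:stationary asymptotic low} and Lemma~\ref{lem:sum of scaled vn}. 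One takes the test flow $\phi$ to be (a truncation of) the associated flow that carries unit current from $(0,0)$ to $(1,n)$ along the ladder, divergence-free off the endpoints, so that the penalty term $-2\sum_x h_{A,B}(x)(\DIV\phi)(x)$ is negligible. Plugging in, $\|\Phi_f-\phi\|^2$ reduces to a weighted sum $\sum_k c^s(\cdot)[\,\cdot\,]^2$ dominated by the low-infection edges, and after inserting the asymptotics $\alpha\mu(0,0) \sim \lambda N C_{N,\lambda}$ from \eqref{eq:u0 asymptotic} and the form of $C_{N,\lambda}$ from Theorem~\ref{thm:stationary asymptotic low}, the leading term should collapse to $(1+2\lambda)\bigl(\tfrac{\lambda}{1+\lambda}\bigr)^{\frac{2}{1+2\lambda}} B\bigl(\tfrac{1}{1+2\lambda},N\bigr)^{-1}\bigl(\tfrac{1+2\lambda}{(1+\lambda)^2}\bigr)^N$. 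The long-range jumps $r_F((1,n),(1,n-j))$ for $j\ge 2$ should only contribute lower-order corrections and must be checked not to spoil the estimate.

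For the \textbf{lower bound} I would use the Thomson principle with a test flow $\psi$ approximating the true maximizer $\tfrac{1}{2\CAP}(\Phi_{h^\dagger}+\Phi_h^*)$: again essentially the unit-current flow from $(0,0)$ to $(1,n)$ pushed through the ladder, anti-symmetrized, with the companion function $g$ chosen (or simply taken to be $0$) so that $\sum_x h_{A,B}(x)(\DIV\psi)(x)$ is close to $1$. The key quantity $\|\Phi_g-\psi\|^2$ is then a weighted sum $\sum \frac{\psi(x,y)^2}{c^s(x,y)}$ over ladder edges, and the dominant contribution comes precisely from the low-infection stretch where the conductance $c^s$ is exponentially small; estimating it via the integral/Beta-function asymptotics of Section~\ref{section:energy landscape} gives the matching constant. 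To handle the fact that $h_{A,B}$ is not exactly known when evaluating $\sum_x h_{A,B}(x)(\DIV\psi)(x)$, I would make $\psi$ exactly divergence-free on $(A\cup B)^c$ so that this sum equals $(\DIV\psi)(\{(0,0)\})$ identically, reducing the problem to a pure norm computation; alternatively one invokes Proposition~\ref{prop:equilibrium potential estimate} to control $h_{A,B}$ on the region where $\psi$ has residual divergence.

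\textbf{Main obstacle.} The delicate point is the \emph{uniformity in $n$} of the $o(1)$ error together with the precise matching of constants on both sides. The exponential and the $N^{-1/(1+2\lambda)}$-type prefactor must come out identically from the Dirichlet and Thomson sides, which requires that the test objects be good enough to resolve the $(1+o(1))$ level — in particular the flow must be genuinely (or almost exactly) divergence-free on the low-infection corridor, and the weighted sums must be compared to the Beta-function integrals $\int_0^{\lambda/(1+2\lambda)} u^{\cdot}(1-u)^{\cdot}(1 - \tfrac{1+2\lambda}{\lambda}u)^{-\frac{2\lambda}{1+2\lambda}}\,du$ of \eqref{eq:recurrence solution integral} with the same accuracy used in Theorem~\ref{thm:stationary asymptotic low}. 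Controlling the contribution of the long backward jumps ($j\ge 2$) to the flow norm, and making sure the truncation of the flow near the metastable well (where $h_{A,B}\approx 0$ but the measure is large) does not leak, is where most of the technical care will go; the reduction via monotonicity (Proposition~\ref{prop:monotonicity of capacity}) to the two extreme values $n=\varepsilon N$ and $n=N$ is what keeps the uniformity statement manageable.
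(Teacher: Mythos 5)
Your overall architecture---apply the Dirichlet and Thomson principles to the trace process on $F$ with test pairs mimicking the extremizers in Theorems~\ref{thm:dirichlet} and~\ref{thm:thomson}, modify the flows to be divergence-free off $\{(0,0),(1,n)\}$, and control the leakage from long-range jumps---is the same as the paper's. But two points separate a plan that works cleanly from one that bogs down.

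First, the monotonicity reduction is not justified. You want to sandwich $\CAP((0,0),(1,n))$ between its values at $n = N$ and $n = \varepsilon N$ by citing Proposition~\ref{prop:monotonicity of capacity}, but that proposition concerns enlarging the endpoint \emph{sets} ($A\subseteq A'$, $B\subseteq B'$), not translating a single endpoint along a chain. Here the trace process has a direct edge from every $(1,k)$ to $(0,0)$ with rate $(1+\lambda)^{-k}$, plus long-range backward jumps, so the graph is not a path and there is no series-law argument making $n\mapsto\CAP((0,0),(1,n))$ monotone. The paper avoids this altogether: the test objects $h$ and $h^\dagger$ it constructs do not depend on $n$ at all (only on $R_1=\lfloor N^q\rfloor$), and $n$ enters only through the crude $\widetilde\eta$ piece of the correction flow in Lemma~\ref{lem:flow modification}, whose norm is exponentially small uniformly over $n\in[\varepsilon N,N]$. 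Uniformity comes for free, with no need to compare different $n$'s.

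Second, and more consequentially, you have not identified the structural collapse that makes the computation a one-liner rather than a Laplace-method slog. The paper takes $h$ to be the \emph{indicator} of $(0,0)$ (because, starting from any $(1,k)$, the forward process hits $(1,n)$ before $(0,0)$ with high probability, so the true $h_{(0,0),(1,n)}$ is essentially an indicator), and $h^\dagger(1,k)=(1+\lambda)^{-k}$ on $k\le R_1$ (from the geometric structure of the adjoint's jumps, as computed via Lemma~\ref{lem:sum of scaled vn}). With $(f,\phi)$ and $(g,\psi)$ as in \eqref{eq:test function flow}, one then has the identity
\[
\Phi_f - \phi \;=\; \Phi_g - \psi \;=\; \tfrac12(\Phi_h + \Phi_h^*) \;=\; \Psi_h,
\]
and because $h$ is an indicator, $\Psi_h$ is supported \emph{only on edges incident to $(0,0)$}. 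Hence
\[
\|\Psi_h\|^2 = \frac{1}{2Z_{N,\lambda}}\Bigl[\alpha\mu(0,0) + \sum_{k=0}^N \mu(1,k)(1+\lambda)^{-k}\Bigr]
= \frac{1}{Z_{N,\lambda}}\lambda N C_{N,\lambda}\,(1+o(1))
\]
by \eqref{eq:u0 asymptotic} and Lemma~\ref{lem:sum of scaled vn}, and the Dirichlet and Thomson bounds coincide automatically; the divergence-free corrections $\eta$ are orthogonal to $\Psi_h$ and have negligible norm. Your proposal instead guesses $f$ as ``a partial sum of $(1+\lambda)^{-k}$'' (birth--death intuition that does not transfer here, given the long-range jumps and direct $(1,k)\to(0,0)$ edges) and proposes to grind out $\|\Phi_f-\phi\|^2$ as a weighted sum over all ladder edges matched to Beta-function integrals. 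That can in principle work, but the $(1+o(1))$ matching of constants on both sides becomes a serious chore, and you would have to rediscover the fact that only the $(0,0)$-incident edges actually matter. Recognizing that $h$ is an indicator---hence $\Psi_h$ lives on a single star of edges---is the missing idea.
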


To prove the theorem stated above,
	we construct effective test functions and flows
	for use in variational principles associated with the trace process.
Initially, we define two functions $h$ and $h^\dagger$,
	which are designed to approximate the equilibrium potentials of the trace process and its adjoint 
	between the stable state $(0,0)$ and the state $(1,n)$.
Next, we introduce tentative test functions and flows:
\begin{equation}\label{eq:test function flow}
	(f,\phi) = \Bigl(\frac{1}{2}(h + h^\dagger),
		\frac{1}{2}(\Phi_{h^\dagger} - \Phi_{h}^*)\Bigr) \quad\text{and}\quad
	(g,\psi) = \Bigl(\frac{1}{2}(h^\dagger - h),
		\frac{1}{2}(\Phi_{h^\dagger} + \Phi_{h}^*)\Bigr),
\end{equation}
which are analogous to the extremizers
	appearing in Theorems \ref{thm:dirichlet} and \ref{thm:thomson}.
Finally, we slightly modify these test flows $\phi$ and $\psi$ to ensure that
	the resulting flows are divergence-free, except at the states $(0,0)$ and $(1,n)$.
We now provide heuristic arguments 
	that motivate our choice of test functions and flows.

In the case of the function $h$, 
	since an infected hub transmits infection simultaneously to all $N$ leaves,
	the process is highly unlikely to reach the stable state before hitting the state $(1,n)$.
Thus, it is natural to define 
\begin{equation*}
	h(x) = \begin{cases} \smallskip
		1	&	\text{if } x = (0,0), \\  \smallskip
		0	&	\text{otherwise}
	\end{cases}
\end{equation*}
for $x \in F$.

Next, consider the adjoint $(X_F^\dagger(t))_{t\ge 0}$ of the trace process,
which has jump rates
\[
	r_F^\dagger(x,y)
	= \frac{\mu(y)}{\mu(x)} r_F(y,x),\qquad
	x,y\in F.
\]
Suppose that $x = (1,k)$, and $1 \ll k \ll N$.
By Theorem \ref{thm:stationary asymptotic low}, as $N \to \infty$,
\begin{align*}
	r_F^\dagger((1,k),(1,k-1))
		&= \frac{\lambda}{1+\lambda}N (1 + o(1)), \\
	r_F^\dagger((1,k),(1,k+1))
		&= (1+\lambda) k (1 + o(1)).
\end{align*}
Moreover, for long jumps,
	if $k + 1 < k + j \le N$, we have 
\begin{equation*}
	r_F^\dagger((1,k),(1,k+j))
	= \frac{\mu(1,k+j)}{\mu(1,k)}
		\frac{\lambda}{(1+\lambda)^{j+1}}.
\end{equation*}
Recall from Lemma \ref{lem:sum of scaled vn} that we have 
\[
	\sum_{j=2}^{N-k} \mu(1,k+j) (1+\lambda)^{-(k+j)}
	= \lambda N C_{N,\lambda} (1+o(1)),
\]
since
\[
	\mu(1,k) (1+\lambda)^{-k} = C_{N,\lambda}(1 + o(1))
\]
provided $k \ll N$.
Hence, the process performs a long jump at rate 
\begin{align*}
	\sum_{j=2}^{N-k} r_F^\dagger((1,k),(1,k+j))
	&= \sum_{j=2}^{N-k}
		\frac{\lambda}{1+\lambda}
		\frac{\mu(1,k+j)(1+\lambda)^{-(k+j)}}{\mu(1,k) (1+\lambda)^{-k}}
	= \frac{\lambda^2}{1+\lambda} N (1+o(1)).
\end{align*}
Therefore, roughly speaking, if the adjoint process is positioned at $(1,k)$,
	the value of $k$ decreases by $1$ at an approximate rate $\frac{\lambda}{1+\lambda}N$,
	whereas it increases by a large amount at an approximate rate $\frac{\lambda^2}{1+\lambda} N$.
Interpreting the long jumps as transitions directly to the state $(1,n)$, 
	each jump of the adjoint process either brings the system one step closer to the state $(0,0)$ with probability $\frac{1}{1+\lambda}$, 
	or results in a transition to the state $(1,n)$ with probability $\frac{\lambda}{1+\lambda}$.
Consequently, it is plausible to select an approximately geometric form for the function $h^\dagger$: 
\begin{equation*}
	h^\dagger(x) = \begin{cases}  \smallskip
		1	&	\text{if } x = (0,0), \\  \smallskip
		(1+\lambda)^{-k}	&	\text{if } x = (1,k),\ 0\le k\le R_1, \\  \smallskip
		0	&	\text{otherwise}
	\end{cases}
\end{equation*}
for some $1 \ll R_1 \ll N$, say $R_1 = \lfloor N^q\rfloor$ for some small $q > 0$.

The divergence of the associated flows of $h$ and $h^\dagger$ can be directly computed 
	from \eqref{eq:div of associated flows} 
	by applying \eqref{eq:u0 asymptotic} and Lemma \ref{lem:sum of scaled vn}.
\begin{lemma}\label{lem:div of associated flows}
	Let $0< q < 1$, $R_1 = \lfloor N^q \rfloor$,
		and let $h$ and $h^\dagger$ be as above. 
	Then for $x \in F$, as $N \to \infty$, we have 
	\begin{equation}\label{eq:div of phi h star}
		(\DIV \Phi_{h}^*)(x)
		= \begin{cases} \smallskip
			-\frac{1}{Z_{N,\lambda}} \lambda N C_{N, \lambda} (1 + o(1))
				& \text{if } x = (0,0), \\ \smallskip
			\frac{1}{Z_{N,\lambda}} \mu(1,k) (1+\lambda)^{-k}
				& \text{if } x = (1,k),\ 0\le k\le N,
		\end{cases}
	\end{equation}
	and 
	\begin{equation}\label{eq:div of phi h dagger}
		(\DIV \Phi_{h^\dagger})(x)
		= \begin{cases} \smallskip
			-\frac{1}{Z_{N,\lambda}} \lambda N C_{N, \lambda} (1 + o(1))
				& \text{if } x = (0,0), \\ \smallskip
			\frac{1}{Z_{N,\lambda}} C_{N, \lambda} O(N^q)
				& \text{if } x = (1,k),\ 0\le k\le R_1, \\ \smallskip
			\frac{1}{Z_{N,\lambda}} \lambda N C_{N, \lambda} (1 + o(1))
				& \text{if } x = (1,R_1 + 1), \\ \smallskip
			0	& \text{otherwise}.
		\end{cases}
	\end{equation}
\end{lemma}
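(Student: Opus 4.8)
\emph{Proof plan.} The plan is to evaluate both divergences directly from the identities in \eqref{eq:div of associated flows}, specialized to the trace process on $F$ with stationary distribution $\nu = \frac{1}{Z_{N,\lambda}}\mu$. Thus $(\DIV\Phi_{h}^*)(x) = \nu(x)(\mathscr{L}h)(x)$, while, after rewriting with the stationarity of $\nu$,
\[
	(\DIV\Phi_{h^\dagger})(x) = \sum_{y:\, y\sim x} \nu(y)\,r_F(y,x)\bigl(h^\dagger(y) - h^\dagger(x)\bigr),
\]
a form that only involves the values of $\nu$ at states adjacent to $x$, where Theorem \ref{thm:stationary asymptotic low} and Lemma \ref{lem:sum of scaled vn} apply.

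The computation for $\Phi_{h}^*$ is immediate: since $h = \mathbf{1}_{\{(0,0)\}}$, the only transition out of $(1,k)$ that reaches a state where $h$ is nonzero is $(1,k)\to(0,0)$, of rate $(1+\lambda)^{-k}$, so $(\mathscr{L}h)(1,k) = (1+\lambda)^{-k}$ and $(\DIV\Phi_{h}^*)(1,k) = \frac{1}{Z_{N,\lambda}}\mu(1,k)(1+\lambda)^{-k}$ for $0\le k\le N$; and $(\mathscr{L}h)(0,0) = -\alpha$, so $(\DIV\Phi_{h}^*)(0,0) = -\frac{1}{Z_{N,\lambda}}\alpha\mu(0,0) = -\frac{1}{Z_{N,\lambda}}\lambda N C_{N,\lambda}(1+o(1))$ by \eqref{eq:u0 asymptotic}. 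This is \eqref{eq:div of phi h star}.

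For $\Phi_{h^\dagger}$, recall that the transitions of the trace process into $(1,k)$ are: the upward step from $(1,k-1)$ of rate $\lambda(N-k+1)$, the unit downward step from $(1,k+1)$ of rate $(k+1)+\frac{\lambda}{(1+\lambda)^2}$, and the long downward jumps from $(1,k+j)$, $j\ge 2$, of rate $\frac{\lambda}{(1+\lambda)^{j+1}}$. Since upward transitions move by exactly one and $h^\dagger$ is supported on $\{(0,0)\}\cup\{(1,m): 0\le m\le R_1\}$, no source of $(1,k)$ carries $h^\dagger$-mass when $k\ge R_1+2$, so $(\DIV\Phi_{h^\dagger})(1,k)=0$ there; for $k=R_1+1$ only the step from $(1,R_1)$ survives, and inserting $v_{R_1}\simeq C_{N,\lambda}(1+\lambda)^{R_1}$ from Theorem \ref{thm:stationary asymptotic low} (legitimate since $R_1 = o(N)$) yields $\frac{1}{Z_{N,\lambda}}\lambda(N-R_1)C_{N,\lambda}(1+o(1)) = \frac{1}{Z_{N,\lambda}}\lambda N C_{N,\lambda}(1+o(1))$. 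The delicate range is $0\le k\le R_1$, where $h^\dagger(1,k)=(1+\lambda)^{-k}$: the incoming term from $(1,k-1)$ equals $\frac{1}{Z_{N,\lambda}}v_{k-1}\lambda^2(N-k+1)(1+\lambda)^{-k}$, while the long-jump terms whose source carries no $h^\dagger$-mass (those with source index $m>R_1$) sum to $-\frac{\lambda}{1+\lambda}\frac{1}{Z_{N,\lambda}}\sum_{R_1<m\le N} v_m(1+\lambda)^{-m}$; substituting $v_{k-1}\simeq C_{N,\lambda}(1+\lambda)^{k-1}$ and applying Lemma \ref{lem:sum of scaled vn} together with $\sum_{m\le R_1}v_m(1+\lambda)^{-m}=O(R_1 C_{N,\lambda})$ (negligible next to $N C_{N,\lambda}$), both equal $\frac{1}{Z_{N,\lambda}}\frac{\lambda^2}{1+\lambda}N C_{N,\lambda}$ up to relative error $O(R_1/N)$ and with opposite signs, so they cancel up to $\frac{1}{Z_{N,\lambda}}C_{N,\lambda}O(N^q)$. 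The remaining incoming contributions — the unit downward step and the long jumps with source index in $[k+2,R_1]$ — each reduce, after substituting $v_{k+j}\simeq C_{N,\lambda}(1+\lambda)^{k+j}$, to $\frac{1}{Z_{N,\lambda}}C_{N,\lambda}$ times a sum of at most $O(R_1)$ bounded terms, hence are $\frac{1}{Z_{N,\lambda}}C_{N,\lambda}O(N^q)$. Adding up gives $(\DIV\Phi_{h^\dagger})(1,k) = \frac{1}{Z_{N,\lambda}}C_{N,\lambda}O(N^q)$; the case $k=0$ is identical but lacks the $(1,k-1)$-term, so no cancellation is needed. Finally, at $(0,0)$ the incoming transitions are $(1,n)\to(0,0)$ of rate $(1+\lambda)^{-n}$, so $(\DIV\Phi_{h^\dagger})(0,0) = \frac{1}{Z_{N,\lambda}}\bigl(\sum_{n\le R_1}v_n(1+\lambda)^{-2n} - \sum_{n=0}^N v_n(1+\lambda)^{-n}\bigr)$, and since the first sum is $O(C_{N,\lambda})$ while the second is $\lambda N C_{N,\lambda}(1+o(1))$ by Lemma \ref{lem:sum of scaled vn}, this equals $-\frac{1}{Z_{N,\lambda}}\lambda N C_{N,\lambda}(1+o(1))$. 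This establishes \eqref{eq:div of phi h dagger}.

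The main obstacle is precisely the cancellation in the range $0\le k\le R_1$: the two competing incoming contributions are individually of order $N C_{N,\lambda}$, an order of magnitude above the claimed $O(N^q)C_{N,\lambda}$, so one must track the relative errors in $v_m\simeq C_{N,\lambda}(1+\lambda)^m$ uniformly over $m=O(N^q)$ (furnished by Theorem \ref{thm:stationary asymptotic low}) and control the tail sum $\sum_{m>R_1}v_m(1+\lambda)^{-m}$ via Lemma \ref{lem:sum of scaled vn}; the rest is routine summation of geometric-type series.
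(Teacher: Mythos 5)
Your proposal is correct and follows the paper's intended approach: the paper gives no detailed proof, merely stating that the result follows by direct computation from \eqref{eq:div of associated flows}, \eqref{eq:u0 asymptotic}, and Lemma~\ref{lem:sum of scaled vn}, and your calculation fills in exactly those details, including the key cancellation between the incoming step from $(1,k-1)$ and the outgoing contributions to the tail $\{(1,m):m>R_1\}$ in the regime $0\le k\le R_1$. One small imprecision: at $k=0$ you say ``the case is identical but lacks the $(1,k-1)$-term, so no cancellation is needed,'' but absence of the $(1,k-1)$-term alone would leave the \emph{other} large term (the tail sum $-\frac{\lambda}{1+\lambda}\frac{1}{Z_{N,\lambda}}\sum_{m>R_1}v_m(1+\lambda)^{-m}$) uncancelled; what actually saves the $k=0$ case is that the trace process has no long jump $(1,m)\to(1,0)$ for $m\ge 2$ (the jump $j=m$ lands at $(0,0)$, not $(1,0)$), so \emph{both} would-be large terms are absent simultaneously and only the $O(R_1)C_{N,\lambda}/Z_{N,\lambda}$ contributions from $(0,0)$ and $(1,1)$ remain.
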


We now modify the test flows $\phi$ and $\psi$ defined in \eqref{eq:test function flow}
	to ensure that these flows become divergence-free, except at the sites $(0,0)$ and $(1,n)$.
Heuristically, we anticipate that the equilibrium potential of the trace process
	decays rapidly for states distant from the stable state $(0,0)$.
Thus, to obtain accurate estimates from the variational principles,
	it suffices to carefully adjust the flows 
	only at the sites $x = (1,k)$ with $0\le k \le R_1$.
For all other sites, the flows may be modified more coarsely.

In the remainder of this subsection, 
	whenever we write $\Phi(x,y) = s$ for a flow $\Phi$, 
	it implicitly means that $\Phi(y,x) = -s$.

\begin{lemma}\label{lem:flow modification}
	Let $\Phi$ be one of the flows $\phi$ and $\psi$,
		and let $R_2 = \lfloor rN\rfloor$ where $0 < r < \min\{\varepsilon,\frac{\lambda}{1+2\lambda}\}$.
	Then, there exists a flow $\eta$ satisfying
	\[
		\eta(x,y)
		= \begin{cases} \smallskip 
			\frac{1}{Z_{N,\lambda}} C_{N, \lambda} O(N^{-1+2q})
				& \text{if } x = (1,k),\ y = (1,l),\ k\in [0,R_1],\ l\in [0,R_2], \\ \smallskip
			\frac{1}{Z_{N,\lambda}} C_{N, \lambda} O(N^{2+2q})
				& \text{if } x = (1,k),\ y = (1,k+1),\ k \in [R_1+1,N-1], \\ \smallskip
			\frac{1}{Z_{N,\lambda}} \mu(1,k) (1+\lambda)^{-k} O(N)
				& \text{if } x = (1,k),\ y = (1,k+1),\ k \in [n,N-1], \\ \smallskip
			0	& \text{otherwise},
		\end{cases}
	\]
	so that the flow $\widehat{\Phi} = \Phi + \eta$ becomes divergence-free, 
		except at the sites $(0,0)$ and $(1,n)$.
\end{lemma}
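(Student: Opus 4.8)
The plan is to construct $\eta$ explicitly as a ``transport flow'' that moves the divergence of $\Phi$ off every vertex of $F\setminus\{(0,0),(1,n)\}$, using Lemma~\ref{lem:div of associated flows} to locate and bound that divergence. Since $\Phi\in\{\phi,\psi\}$ and $\phi,\psi=\tfrac12(\Phi_{h^\dagger}\pm\Phi_{h}^*)$, the lemma gives, up to the overall factor $Z_{N,\lambda}^{-1}$: $(\DIV\Phi)(1,k)=C_{N,\lambda}\,O(N^q)$ for $0\le k\le R_1$; $(\DIV\Phi)(1,R_1+1)=\lambda N C_{N,\lambda}(1+o(1))$; $(\DIV\Phi)(1,k)=\pm\tfrac12 v_k(1+\lambda)^{-k}$ for $R_1+1<k\le N$; and all remaining divergence at $(0,0)$, which we are free to ignore. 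I would build $\eta$ in two stages: a \emph{fine} stage on $[0,R_1]$ and a \emph{coarse} stage on $[R_1+1,N]$.

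In the fine stage I would, for each $k$ with $1\le k\le R_1$, cancel $(\DIV\Phi)(1,k)$ by distributing it evenly over the $\Theta(N)$ edges joining $(1,k)$ to the states $(1,l)$, $l\in[R_1+1,R_2]$ --- each such pair is an edge of $\mathfrak{E}$ because $r_F((1,l),(1,k))>0$ whenever $1\le k$ and $l\ge k+2$. As $|(\DIV\Phi)(1,k)|=Z_{N,\lambda}^{-1}C_{N,\lambda}O(N^q)$ and there are $R_2-R_1=\Theta(N)$ targets, each resulting edge-flow has size $Z_{N,\lambda}^{-1}C_{N,\lambda}O(N^{q-1})$, within the asserted bound. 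The vertex $(1,0)$ is special, its only neighbours in $F$ being $(1,1)$ and $(0,0)$; its divergence must therefore be pushed along the single edge $(1,0)\sim(1,1)$ (also permitted by the statement), and I would then diffuse the part of $(1,1)$'s divergence thus created over $(1,1)$'s long-range edges. After this stage every $(1,k)$ with $k\le R_1$ is divergence-free, and each $(1,l)$, $l\in[R_1+1,R_2]$, has acquired an extra divergence of size at most $Z_{N,\lambda}^{-1}C_{N,\lambda}O(R_1 N^{q-1})=Z_{N,\lambda}^{-1}C_{N,\lambda}O(N^{2q-1})$.

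The coarse stage cancels, for each $l\in[R_1+1,N]\setminus\{n\}$, the sum $\rho_l$ of the original divergence $(\DIV\Phi)(1,l)$ and the divergence produced in the fine stage, using only the nearest-neighbour chain $(1,R_1+1)\sim\cdots\sim(1,N)$. This is the standard path-transport construction: orient the chain from both ends toward $(1,n)$ and let the flow across each edge equal the signed partial sum of the $\rho_l$ lying on the side of that edge away from $(1,n)$. This renders every $(1,l)$, $R_1+1\le l\le N$, $l\ne n$, divergence-free, dumps the total at $(1,n)$, and puts nothing on edges at $(0,0)$ or on long-range edges outside $[0,R_2]$. Each chain-edge value is bounded by $\sum_{l=R_1+1}^{N}|\rho_l|$; by Lemma~\ref{lem:sum of scaled vn} (which gives $\sum_l v_l(1+\lambda)^{-l}=\lambda N C_{N,\lambda}(1+o(1))$) the contribution of the original divergences, together with the $O(N)$-sized divergence at $(1,R_1+1)$, is $Z_{N,\lambda}^{-1}C_{N,\lambda}O(N)$, while the fine-stage contribution is at most $N\cdot Z_{N,\lambda}^{-1}C_{N,\lambda}O(N^{2q-1})=Z_{N,\lambda}^{-1}C_{N,\lambda}O(N^{2q})$; together this lies comfortably inside $Z_{N,\lambda}^{-1}C_{N,\lambda}O(N^{2+2q})$. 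Taking $\eta$ to be the sum of the two stages then yields a flow with the prescribed support and magnitude, and $\widehat{\Phi}=\Phi+\eta$ is divergence-free on $F\setminus\{(0,0),(1,n)\}$ by construction.

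The crux is the fine stage: the per-edge bound on $[0,R_1]$ forces genuine diffusion across $\Theta(N)$ long-range edges --- a naive chaining of the divergences along $[0,R_1]$ would cost $O(N^{2q})$ per edge --- and one must check that the back-reaction this creates on $[R_1+1,R_2]$ is small enough to be absorbed by the coarse stage without breaching its bound, which works precisely because that back-reaction is only $O(N^{2q-1})$ per vertex. Everything else is routine bookkeeping of the estimates in Lemma~\ref{lem:div of associated flows}, Lemma~\ref{lem:sum of scaled vn}, \eqref{eq:u0 asymptotic}, and Theorems~\ref{thm:stationary asymptotic high}--\ref{thm:stationary asymptotic low}.
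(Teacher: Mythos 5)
Your two-stage plan --- a fine stage that diffuses the divergence on $[0,R_1]$ over $\Theta(N)$ long-range edges, followed by chain transport along $[R_1+1,N]$ --- is precisely the paper's construction. The only structural difference is that you spread $(\DIV\Phi)(1,k)$ directly onto targets in $[R_1+1,R_2]$, while the paper's $\eta_k$ spreads the inductively accumulated divergence $D_k$ onto $[k+1,R_2]$; your variant is marginally cleaner because the fine stage never re-injects divergence into $[0,R_1]$, but the bookkeeping is otherwise identical.

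Your handling of $(1,0)$ is a genuine improvement over what the paper writes. You are right that $(1,0)$ has only $(1,1)$ and $(0,0)$ as neighbours in $F$: the long backward jump $r_F((1,l),(1,l-j))$ is defined only for $2\le j\le l-1$, so $(1,l)\to(1,0)$ never occurs for $l\ge 2$. The paper's $\eta_0$ assigns flow on the pairs $(1,l),(1,0)$ with $l\in[1,R_2]$, almost all of which are not in $\mathfrak{E}$, so as written the inductive step fails at $k=0$, and your repair (route $(\DIV\Phi)(1,0)$ across the single edge $(1,0)\sim(1,1)$) is the right one. One caution though: your parenthetical ``also permitted by the statement'' is not quite accurate. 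Since $h(1,0)=0$ and $h(0,0)=1$ one has $(\mathscr{L}h)(1,0)=r_F((1,0),(0,0))=1$, and $(\mathscr{L}^\dagger h^\dagger)(1,0)=\Theta(1)$ as well, so $(\DIV\Phi)(1,0)=\Theta(1)\nu(1,0)=\Theta(1)Z_{N,\lambda}^{-1}C_{N,\lambda}$; pushing this through a single edge forces $|\eta((1,0),(1,1))|=\Theta(1)Z_{N,\lambda}^{-1}C_{N,\lambda}$, which exceeds the stated per-edge bound $O(N^{-1+2q})Z_{N,\lambda}^{-1}C_{N,\lambda}$ for any $q<1$. This overshoot is already latent in the lemma's statement, and it is harmless for the downstream use in Theorem \ref{thm:capacity estimate}: since $c_F^s((1,0),(1,1))=\Theta(N)Z_{N,\lambda}^{-1}C_{N,\lambda}$, that edge contributes only $\Theta(N^{-1})Z_{N,\lambda}^{-1}C_{N,\lambda}$ to $\|\eta\|^2$, negligible against $\|\Psi_h\|^2=\Theta(N)Z_{N,\lambda}^{-1}C_{N,\lambda}$. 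So your proof is essentially correct and matches the paper's, and you have exposed a small imprecision in the lemma's per-edge bound near $(1,0)$ that a careful write-up should carve out explicitly.
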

\begin{proof}
	By Lemma \ref{lem:div of associated flows}, together with Theorems 
		\ref{thm:stationary asymptotic high} and \ref{thm:stationary asymptotic low}, 
		we have 
	\[
		(\DIV \Phi)(1,k) = \begin{cases} \smallskip
			\frac{1}{Z_{N,\lambda}} C_{N, \lambda} O(N^q)
				& \text{if } 0\le k\le R_1, \\ \smallskip
			\frac{1}{Z_{N,\lambda}} C_{N, \lambda} O(N)
				& \text{otherwise}.
		\end{cases}
	\]

	We first inductively modify the flow 
		to be divergence-free at the sites $(1,k)$ for each $0\le k\le R_1$.
	Set $\Phi_{-1} = \Phi$.
	Fix $k$, and suppose a flow $\Phi_{k-1}$ is divergence-free at all sites $(1,j)$ for $0\le j < k$,
		and that $(\DIV\Phi_{k-1})(1,k) = D_k$.
	Consider the flow $\Phi_k = \Phi_{k-1} + \eta_k$, where
	\[
		\eta_k(x,y)
		= \begin{cases} \smallskip
			(R_2 - k)^{-1} D_k	
				&	\text{if } x = (1,l),\ y = (1,k),\ l \in [k+1,R_2], \\ \smallskip
			0	&	\text{otherwise}.
		\end{cases}
	\]
	Then each term appearing in $\eta_k$ is of order $O(N^{-1})D_k$,
		and the resulting flow $\Phi_k$ is divergence-free at all sites $(1,j)$ for $0\le j\le k$.	
	By repeating this procedure, 
		we obtain a flow $\Phi_{R_1}$ that is divergence-free at every site $(1,k)$ for $0 \le k \le R_1$.
	We readily observe that $\Phi_{R_1}$ has 
		divergence of order $\frac{1}{Z_{N,\lambda}} C_{N, \lambda} O(N^{1+2q})$ at all other sites.
	Define 
	\[
		\widetilde{\eta}(x,y)
		= \begin{cases} \smallskip
			- \sum_{l=0}^{k} (\DIV \Phi_{R_1})(1,l)
				&	\text{if } x = (1,k),\ y = (1,k+1),\ k \in [R_1+1,n-1], \\ \smallskip
			\sum_{l=k+1}^{N} (\DIV \Phi_{R_1})(1,l)
				&	\text{if } x = (1,k),\ y = (1,k+1),\ k \in [n,N-1], \\ \smallskip
			0	&	\text{otherwise}.
		\end{cases}
	\]
	Then the flow $\eta = \sum_{k=1}^{R_1} \eta_k + \widetilde{\eta}$ 
		satisfies the requirements of the lemma.
	The estimates for $\eta((1,k),(1,k+1))$, where $k\in[n,N-1]$, 
		follow from \eqref{eq:div of phi h star} and 
		Theorems \ref{thm:stationary asymptotic high} 
		and \ref{thm:stationary asymptotic low}.
\end{proof}

\begin{proof}[Proof of Theorem \ref{thm:capacity estimate}]
	The proof is straightforward. 
	Let the test functions and flows $(f,\phi)$ and $(g,\psi)$ 
		be defined as in \eqref{eq:test function flow},
		and let $\widehat{\phi}$ and $\widehat{\psi}$ be 
		the modified flows constructed as described in Lemma \ref{lem:flow modification}.
	Applying Theorems \ref{thm:dirichlet} and \ref{thm:thomson}
		with these test functions and flows,
		we establish the following upper and lower bounds for the capacity:
	\[
		\|\Phi_f - \widehat{\phi}\|^2 \quad\text{and}\quad 
		\frac{1}{\|\Phi_g - \widehat{\psi}\|^2} |(\DIV \widehat{\psi})(0,0)|^2,
	\]
	respectively.
	We will show that these two bounds coincide and are equal to 
	\begin{equation}\label{eq:capacity bound coincidence}
		\frac{1}{Z_{N,\lambda}}\lambda N C_{N,\lambda} (1+o(1)),
	\end{equation}
	which matches the right-hand side of \eqref{eq:capacity estimate}
		by Lemma \ref{lem:total mass}.

	Note that 
	\[
		\Phi_f - \phi
		= \psi - \Phi_g
		= \frac{1}{2}(\Phi_h + \Phi_h^*)
		= \Psi_h.
	\]
	By \eqref{eq:u0 asymptotic} and Lemma \ref{lem:sum of scaled vn}, it follows that 
	\begin{align*}
		\|\Psi_h\|^2
		&= \frac{1}{2}\sum_{x,y\in F} c_F^s(x,y)[h(y) - h(x)]^2 \\
		&= \frac{1}{2Z_{N,\lambda}}
			\biggl[ 
				\mu(0,0)\alpha + \sum_{k=0}^{N} \mu(1,k) (1+\lambda)^{-k}
			\biggr] 
		= \frac{1}{Z_{N,\lambda}}\lambda N C_{N,\lambda} (1+o(1)).
	\end{align*}
	Moreover, Lemma \ref{lem:div of associated flows} implies that 
	\[
		(\DIV \psi)(0,0)
		= - \frac{1}{Z_{N,\lambda}}\lambda N C_{N,\lambda} (1+o(1)).
	\]
	Hence, the two terms 
	\[
		\|\Phi_f - \phi\|^2 \quad\text{and}\quad 
		\frac{1}{\|\Phi_g - \psi\|^2} |(\DIV \psi)(0,0)|^2
	\]
	coincide at \eqref{eq:capacity bound coincidence}.

	Let $\eta$ be the modification flow described in Lemma \ref{lem:flow modification}.
	Then $\Psi_h$ is supported on pairs $(x,y)$ where either $x$ or $y$ equals $(0,0)$, 
		while $\eta$ is supported on the complement of these pairs.
	Consequently, $\Psi_h$ and $\eta$ are orthogonal with respect to the flow inner product, 
		and $\eta$ is divergence-free at $(0,0)$.
	
	It remains to show that the norm of $\eta$ is negligible 
		compared to the norm of $\Psi_h$.
	We have 
	\[
		c_F^s((1,k),(1,l)) 
		\gtrsim \begin{cases} \smallskip
			\frac{1}{Z_{N,\lambda}} C_{N,\lambda} 
				& \text{if } k\in [0,R_1],\ l\in [0,R_2], \\ \smallskip
			\frac{1}{Z_{N,\lambda}} C_{N,\lambda} (1+\lambda)^{R_1}
				& \text{if } k\in [R_1+1,W-1],\ l = k+1, \\ \smallskip
			\frac{1}{Z_{N,\lambda}} \mu(1,k) 
			& \text{if } k\in [0,N-1],\ l = k+1, 
		\end{cases}
	\]
	which follows immediately from Theorems \ref{thm:stationary asymptotic high} 
		and \ref{thm:stationary asymptotic low}.
	We divide the quantity $\|\eta\|^2$ into three parts:
	\begin{align*}
		\|\eta\|^2
		\le{}& \sum_{k=0}^{R_1} \sum_{l=1}^{R_2} \frac{1}{c_F^s((1,k),(1,l))}
				\Bigl[ \frac{1}{Z_{N,\lambda}} C_{N, \lambda} O(N^{-1+2q}) \Bigr]^2 \\
			&+ \sum_{k=R_1+1}^{W-1} \frac{1}{c_F^s((1,k),(1,k+1))}
				\Bigl[ \frac{1}{Z_{N,\lambda}} C_{N, \lambda} O(N^{2+2q}) \Bigr]^2 \\
			&+ \sum_{k=W}^{N-1} \frac{1}{c_F^s((1,k),(1,k+1))}
				\Bigl[ \frac{1}{Z_{N,\lambda}} \mu(1,k)(1+\lambda)^{-k} O(N) \Bigr]^2. \\
		={}& \frac{1}{Z_{N,\lambda}} C_{N,\lambda} 
			\Bigl[ O(N^{-1+5q}) + (1+\lambda)^{-R_1} O(N^{5+4q}) \Bigr]
			+ \frac{1}{Z_{N,\lambda}} \mu(1,W) (1+\lambda)^{-2W} O(N^3) \\
		\ll{}& \frac{1}{Z_{N,\lambda}} \lambda N C_{N,\lambda} 
	\end{align*}
	The first two sums can be expressed as 
	\[
		\frac{1}{Z_{N,\lambda}} C_{N,\lambda} 
			\Bigl[ O(N^{-1+5q}) + (1+\lambda)^{-R_1} O(N^{5+4q}) \Bigr],
	\]
	which is negligible compared to $\|\Psi_h\|^2$ when $q > 0$ is sufficiently small.
	We now consider the third sum. 
	If $\lambda > \varphi$, then $W=N$, and there is nothing further to prove. 
	If $\lambda \le \varphi$, then the third sum is bounded above by 
	\[
		\frac{1}{Z_{N,\lambda}} \mu(1,W) (1+\lambda)^{-2W} O(N^3)
		\ll \frac{1}{Z_{N,\lambda}} \lambda N C_{N,\lambda} 
	\]
	due to our definition of $w$, so the proof is complete.
\end{proof}

\subsection{Proof of the Main Theorem}\label{subsection:main theorem proof}
We now prove our main result: the Eyring--Kramers law.

\begin{proof}[Proof of Theorem \ref{thm:eyring kramers}]
	Let $\varepsilon > 0$ be given, and let $x\in \{0,1\}\times [\varepsilon N, N]$. 
	Our goal is to prove the identity 
	\begin{equation}\label{eq:eyring kramers restate}
	\begin{split}
		\EE_{x}\tau_{(0,0)}
		&= \frac{1}{1+2\lambda}
			\Bigl(\frac{1+\lambda}{\lambda}\Bigr)^{\frac{2}{1+2\lambda}}
			B\Bigl(\tfrac{1}{1+2\lambda},N\Bigr)
			\Bigl(\frac{(1+\lambda)^2}{1+2\lambda}\Bigr)^{N}
			(1+o(1)).
	\end{split}
	\end{equation}
	Note that \eqref{eq:eyring kramers restate} is equivalent to \eqref{eq:eyring kramers},
		since Stirling's formula \cite[Equation (5.11.12)]{nist} implies that 
		$B(\frac{1}{1+2\lambda},N) \simeq \Gamma(\frac{1}{1+2\lambda}) N^{-\frac{1}{1+2\lambda}}$.

	We will prove \eqref{eq:eyring kramers restate} in three steps.

	\smallskip

	\noindent\textbf{Step 1.}
	$x=(1,n)$ with $n\in [\varepsilon N, W]$, where $W$ is as defined in Theorem \ref{thm:capacity estimate}.

	\nopagebreak
	By Theorem \ref{thm:capacity estimate},
		we immediately see that the right-hand side of \eqref{eq:eyring kramers restate} equals 
		the inverse of the capacity between the states $(0,0)$ and $(1,n)$.
	Thus, applying the mean hitting time formula from Proposition \ref{prop:mean hit time formula},
		it suffices to show that
	\begin{equation}\label{eq:eyring kramers claim 1}
		\sum_{z\in \mathcal{H}} h_{(1,n),(0,0)}^\dagger(z) \nu(z)
		= 1 + o(1),
	\end{equation}
	where $\mathcal{H} = \{0,1\}\times [0,N]$
		and $\nu$ denotes the quasi-stationary distribution of the process.

	Given that $h_{(1,n),(0,0)}^\dagger(z) \le 1$ for all $z\in \mathcal{H}$,
		our task reduces to establishing a suitable lower bound
		for the left-hand side of \eqref{eq:eyring kramers claim 1}.
	Define $m = \lfloor\frac{\lambda}{1+\lambda}N\rfloor$ and $R = N^{\frac{1}{2}+\varepsilon}$,
		and consider states $z = (1,l)$ with $|l-m| < R$ and $l \neq n$.
	Lemma \ref{lem:total mass} shows that 
		the quasi-stationary distribution of the process is concentrated around such states $z$.
	Furthermore, applying Propositions 
		\ref{prop:time symmetry of capacity},
		\ref{prop:monotonicity of capacity},
		and \ref{prop:equilibrium potential estimate},
		together with \eqref{eq:capacity for trace process},
		we obtain 
	\begin{equation*}
		1 - \frac{\CAP_F((1,l),(0,0))}{\CAP_F((1,l),(1,n))}
		\le h_{(1,n),(0,0)}^\dagger(1,l).
	\end{equation*}
	Therefore, it suffices to verify that 
	\begin{equation}\label{eq:eyring kramers claim 2}
		\CAP_F((1,l),(0,0)) \ll \CAP_F((1,l),(1,n))
	\end{equation}
	holds uniformly in $n$ and $l$ to conclude the assertion.
	
	The left-hand side of \eqref{eq:eyring kramers claim 2} can be estimated
		using Theorem \ref{thm:capacity estimate}.
	Thus, the task reduces to establishing a lower bound for the right-hand side.
	We apply Theorem \ref{thm:thomson} (the Thomson principle) 
		by choosing the test function $g \equiv 0$
		and constructing a test flow $\psi$ satisfying\footnote{
			In this proof, if $l > n$,
			we interpret $[l,n]$ as the interval $[n,l]$.
		}
	\[
		\psi(x,y)
		= \begin{cases}  \smallskip
			\pm 1	& \text{if } x = (1,j),\ y = (1,k),\ j,k\in [l,n],\ |j-k|=1, 	\\  \smallskip
			0		& \text{otherwise}.
		\end{cases}
	\]
	Then $\psi$ is a unit flow from $(1,l)$ to $(1,n)$,
		divergence-free except at the states $(1,l)$ and $(1,n)$.
	Consequently, we obtain 
	\begin{align*}
		\CAP_F((1,l),(1,n))
		\ge \frac{1}{\|\psi\|^2} 
		&= \biggl[
			\sum_{k,k+1\in[l,n]} \frac{1}{c_s((1,k),(1,k+1))}
			\biggr]^{-1} \\
		&\ge \frac{1}{N}\min_{k,k+1\in[l,n]} c_s((1,k),(1,k+1)),
	\end{align*}
	where the summation and minimum are taken over all integers $k$ such that
		both $k$ and $k+1$ belong to the interval $[l,n]$.
	Moreover, it holds that
	\begin{align*}
		c_s((1,k),(1,k+1))
		&= \frac{1}{2 Z_{N,\lambda}} \Bigl[
			\lambda(N-k) \mu(1,k)
			+ \Bigl(k+1+\frac{\lambda}{(1+\lambda)^2}\Bigr)\mu(1,k+1)
		\Bigr] \\
		&\ge \frac{1}{Z_{N,\lambda}}\mu(1,k+1).
	\end{align*}
	Hence, it suffices to show that
	\begin{equation*}
		N^2 C_{N,\lambda} \ll \mu(1,k)
	\end{equation*}
	holds uniformly for all $k\in[\varepsilon N, W]$,
		and this fact is clear from the definition of $W$.

	\smallskip

	\noindent\textbf{Step 2.}
	$x=(0,n)$.

	\nopagebreak
	Suppose that formula \eqref{eq:eyring kramers restate} holds uniformly 
		for initial states $x=(1,n)$ with $n\in[\varepsilon N, M]$,
		where $M = M_N$ is a function of $N$.
	We will show that \eqref{eq:eyring kramers restate} also holds uniformly 
		for $x=(0,n)$ with $n\in[\varepsilon N, M]$.

	Fix $n\in [\varepsilon N, M]$.
	By the monotonicity of contact process,
		the mean extinction time starting from $(0,n)$ is 
		less than or equal to the mean extinction time starting from $(1,n)$.
	This observation establishes one direction of inequality for \eqref{eq:eyring kramers restate}.

	For the opposite inequality,
		note that when the hub is healthy,
		the probability that the subsequent jump of the process results in
		reinfection of the hub is $\frac{\lambda}{1+\lambda}$.
	Hence, the process starting from $(0,n)$ reinfects the hub
		before it reaches the state $(0,\lfloor n/2\rfloor)$ with high probability,
		and thus the desired inequality readily follows.
	
	\smallskip

	\noindent\textbf{Step 3.}
	$x=(1,n)$ with $n\in [m+1,N]$, where $m = \lfloor \frac{\lambda}{1+\lambda}N \rfloor$. 

	\nopagebreak
	Starting from the state $x=(1,n)$, 
		the process must hit one of the states in the set $B=\{(1,m+1),(0,m+1)\}$
		before it can hit the all-healthy state $(0,0)$.
	By Steps 1 and 2, 
		the formula \eqref{eq:eyring kramers restate} holds uniformly for 
		the processes initiated at any state in $B$.
	Thus, to verify the formula for $x=(1,n)$, it remains to show that 
		the mean hitting time $\EE_x\tau_B$ is negligible 
		compared to the right-hand side of \eqref{eq:eyring kramers restate}.
	
	We use a martingale method 
		to derive an upper bound for this mean hitting time.
	Define a function $F$ on the state space $\{0,1\}\times [0,N]$ by 
	\[
		F((o,k))
		= \begin{cases} \smallskip 
			\frac{N-k}{N-m-1}	&	\text{if } k\in[m+2,N], \\ \smallskip 
			1					&	\text{if } k\in[0,m+1],
		\end{cases}
	\]
	where $o\in\{0,1\}$.
	Let $L$ be the generator of the contact process $(x_t)_{t\ge 0} = ((o_t,n_t))_{t\ge 0}$.
	For all $k\in[m+2,N]$, we have 
	\[
		LF(1,k)
		= k \Bigl(\frac{1}{N-m-1}\Bigr) + \lambda(N-k) \Bigl(-\frac{1}{N-m-1}\Bigr)
		\ge \frac{1}{N-m-1}
	\]
	since $k \ge \lambda(N-k) + 1$.
	Similarly, we also have 
	\[
		LF(0,k)
		= k \Bigl(\frac{1}{N-m-1}\Bigr)
		\ge \frac{1}{N-m-1}.
	\]
	Now, consider the martingale $(M_t)_{t\ge 0}$ defined by 
	\[
		M_t 
		= F(x_t) - F(x_0) - \int_0^t LF(x_s)\,ds.
	\]
	By the optional stopping theorem and the above inequalities, 
		we have 
	\[
		0
		= \lim_{t\to\infty} \EE_x M_{t\wedge \tau_B}
		\le 1 - \frac{1}{N-m-1} \EE_x \tau_B.
	\]
	Hence, we conclude that $\EE_x \tau_B = O(N)$, which completes the proof for this step.

	\smallskip

	Combining Steps 1, 2, and 3 covers all initial states in the set $\{0,1\}\times [\varepsilon N, N]$.
\end{proof}

\noindent \textbf{Acknowledgment.} 
This research is supported by the National Research Foundation of Korea (NRF) grant 
funded by the Korea government (MSIT) (No. 2023R1A2C100517311) and 2023 Student-Directed 
Education research program through Faculty of Liberal Education, Seoul National University. 
The author expresses gratitude to Insuk Seo for introducing the problem and offering
	enlightening insights, to Mouad Ramil and Seonwoo Kim for their fruitful discussions
	on the problem, 
	and to the anonymous reviewers for identifying an error in an earlier proof of the main theorem.

\bibliographystyle{authordate1}

\end{document}